\numberwithin{equation}{section}
\newtheorem{theorem}{Theorem}[section]
\newtheorem{rem}[theorem]{Remark}
\newtheorem{defn}[theorem]{Definition}
\newtheorem{lemma}[theorem]{Lemma}
\newtheorem{prop}[theorem]{Proposition}
\newtheorem{conj}[theorem]{Conjecture}
\newtheorem{cor}[theorem]{Corollary}
\newtheorem{example}[theorem]{Example}
\def\Re{\mathop{\rm Re}\nolimits} 
\def\Im{\mathop{\rm Im}\nolimits}
\def\dbar{\bar\partial}
\def\d{\partial}
\def\cI{{\mathcal I}}
\def\cE{{\mathcal E}}
\def\cK{{\mathcal K}}
\def\cF{{\mathcal F}}
\let\ep=\varepsilon
\let\vp=\varphi 
\def\bC{{\mathbb C}}
\def\bR{{\mathbb R}}
\def\bD{{\mathbb D}}
\def\bP{{\mathbb P}}
\def\a{{\alpha}}
\def\z{{\zeta}}
\def\b{\beta}
\def\k{{\kappa}}
\def\l{{\ell}}
\def\t{{\tau}}
\title[Zero mass conjecture]
{ On the residual Monge-Amp\`{e}re mass of plurisubharmonic functions with symmetry in $\bC^2$ }
\author{Long Li}
\address{ IMS of ShanghaiTech University, 393 Middle Huaxia Road, Pudong 201210, Shanghai, China}
\email{lilong1@shanghaitech.edu.cn}
\begin{document}
\maketitle 

\begin{abstract}
The aim of this paper is to study 
the residual Monge-Amp\`{e}re mass of a plurisubharmonic function
with isolated singularity at the origin in $\bC^2$. 
We prove that the residual mass is zero 
if its Lelong number is zero at the origin, 
provided that it is $S^1$-invariant. 
 This result answers 
the zero mass conjecture raised by Guedj and Rashkovskii in this special case.
More generally, 
we obtain an estimate on the residual mass 
by the maximal directional Lelong number and Lelong number at the origin.

\end{abstract}

\section{Introduction}
Let $D$ be a bounded domain in $\bC^n$, 
and $u$ a $C^2$-continuous plurisubharmonic function on $D$. 
Then the Monge-Amp\`ere operator operates on $u$
and equals the following positive measure as 
\begin{equation}
\label{int-001}
\mbox{MA}(u): = (dd^c u)^n \geq 0,  
\end{equation}
where $d: = \d + \dbar$ 
and $d^c: = \frac{i}{2}(\dbar - \d)$.
This operator has great importance in pluripotential theory. 
However, it is fully non-linear and can not be defined for 
all plurisubharmonic functions on $D$, cf. \cite{Ceg86}, \cite{Kis84} and \cite{Siu75}.

On the other hand, there are several ways to define 
the Monge-Amp\`ere measure
for a plurisubharmonic function $u$, if extra conditions have been assumed. 
For instance, 
Bedford and Talyor \cite{BT} have shown that $(dd^c u)^n$ is well defined, 
if $u$ is further in $L^{\infty}_{loc}(D)$. 
Later Demailly \cite{Dem93} extended this definition 
to all plurisubharmonic functions whose unbounded locus are relatively compact in $D$. 
In particular, the operator $(dd^c)^n$ acts well 
on plurisubharmonic functions with isolated singularity. 

For simplicity, we take $D$ as the unit ball $B_1$ in $\bC^n$. 
Let $u$ be a plurisubharmonic function on $B_1$ that is locally bounded outside the origin. 
Then Guedj and Rashkovskii (Question 7, \cite{GZ15})
raised the following question: 
\begin{conj}
\label{conj-001}
Assume that $(dd^c u)^n$ has a Dirac mass at the origin. 
Does it imply that $u$ has a positive Lelong number at the origin\ \emph{?}  
\end{conj}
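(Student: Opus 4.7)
The plan is to attack the conjecture first under the additional hypothesis of $S^1$-invariance in $\bC^2$ (the scope of the paper), by translating the pluripotential problem into a real convex-analysis problem via toric reduction. Writing $u(z_1,z_2)=v(s_1,s_2)$ with $s_j=\log|z_j|^2$, the plurisubharmonicity of $u$ and its isolated singularity at the origin translate into convexity of $v$ on $\bR^2$ (suitably extended at $-\infty$) together with local boundedness away from $(-\infty,-\infty)$. A direct computation on the smooth locus yields
\[
(dd^c u)^2 \;=\; C\,\det\bigl(\nabla^2 v(s_1,s_2)\bigr)\,ds_1\wedge ds_2\wedge d\theta_1\wedge d\theta_2
\]
for a positive constant $C$. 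Integrating over $B_r$, using the toric symmetry in $(\theta_1,\theta_2)$ and the real Monge--Amp\`ere change-of-variable formula, gives
\[
\int_{B_r}(dd^c u)^2 \;=\; C'\,\bigl|\nabla v(\Omega_r)\bigr|,
\]
where $\Omega_r=\{s\in\bR^2:e^{s_1}+e^{s_2}\le r^2\}$ and $|\cdot|$ denotes planar Lebesgue measure. Hence the residual mass at $0$ equals $C'|K_\infty|$, with $K_\infty=\bigcap_{r>0}\overline{\nabla v(\Omega_r)}\subset\bR^2_{\ge 0}$.

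Next I would extract information about $K_\infty$ from the asymptotic behaviour of $v$ at $(-\infty,-\infty)$. The indicator of $u$ is the homogeneous convex function $\psi(a)=\lim_{t\to-\infty}v(ta)/t$ on $\bR^2_{\ge 0}$; it encodes all directional Lelong numbers of $u$ at the origin, with $\nu(u,0)$ corresponding to the value in the generic direction and the maximal directional Lelong number to $\sup_{|a|=1}\psi(a)$. Standard convex analysis, applied to the rescalings $v(ta)/t$, identifies $K_\infty$ with the subdifferential image $\partial\psi(\bR^2_{>0})$, a convex compact subset of $\bR^2_{\ge 0}$ whose geometry is fully controlled by $\psi$.

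The heart of the argument is then a two-dimensional geometric estimate bounding $|K_\infty|$ by an expression involving $\nu(u,0)$ and $\sup_{|a|=1}\psi(a)$. Using one-homogeneity and convexity of $\psi$, one fits $\partial\psi(\bR^2_{>0})$ inside a rectangle whose side lengths are dictated by the relevant Lelong numbers; this should produce an inequality of the form $|K_\infty|\le C\,\nu(u,0)\,\sup_{|a|=1}\psi(a)$, which recovers the more general estimate announced in the abstract. Plugging in $\nu(u,0)=0$ gives $|K_\infty|=0$ and hence vanishing residual mass, which is the conjecture in this symmetric case.

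The principal obstacle is this final geometric step. A vanishing Lelong number at the origin constrains the slope of $v$ only along one generic direction in the log-plane, while $K_\infty$ registers asymptotic slopes from every direction; one has to combine the quantitative convexity of $v$ on the full sublevel sets $\Omega_r$ with the homogeneous structure of $\psi$ to rule out that large slopes in complementary directions create positive area. This trade-off is precisely where the $\bC^2$-hypothesis enters in an essential way, and is the step I expect to require the most care; the analogous estimate in higher dimensions, where the target set can have more complicated boundary geometry, is what prevents this argument from settling the conjecture in full generality.
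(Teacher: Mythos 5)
Your plan rests on the substitution $u(z_1,z_2)=v(\log|z_1|^2,\log|z_2|^2)$, which requires $u$ to depend only on $(|z_1|,|z_2|)$, i.e.\ to be invariant under the full torus action $(z_1,z_2)\mapsto(e^{i\theta_1}z_1,e^{i\theta_2}z_2)$. But the paper's hypothesis is the strictly weaker \emph{diagonal} $S^1$-invariance $u(e^{i\theta}z_1,e^{i\theta}z_2)=u(z_1,z_2)$, and a typical function in $\cF(B_1)$ (e.g.\ $\log|z_1-z_2|$ restricted to a ball, or Coman--Guedj-type examples with linear forms in $z_1,z_2$) is not a function of $(|z_1|,|z_2|)$ alone. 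The introduction is explicit that the toric case is already due to Rashkovskii, and the whole point of the paper is to go beyond it. So the ``toric reduction'' in your first step does not apply in the generality claimed, and everything downstream --- the real Monge--Amp\`ere identity, the gradient image $\nabla v(\Omega_r)$, the indicator $\psi$ and the planar area estimate --- lives in the wrong symmetry class.

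For the case you actually treat (toric), your outline is essentially Rashkovskii's: the residual mass is $2!$ times the covolume of the Newton-type convex set attached to the indicator, and vanishing Lelong number makes that covolume zero. So the plan is sound but only re-derives the known toric theorem. The paper's argument is genuinely different: it exploits the Hopf fibration $S^1\hookrightarrow S^3\to\bC\bP^1$, introduces the complex Hopf coordinate $(r,\eta,\z)$, and proves a decomposition formula (Theorem~\ref{thm-cal-001}) splitting $\int_{S_R}d^cu\wedge dd^cu$ into a pluricomplex-energy term on $\bC\bP^1$ plus an $L^2$-Lelong-number term; the zero-mass conclusion and the estimate $\tau_u(0)\le 2\lambda_u(0)\nu_u(0)+[\nu_u(0)]^2$ then come from convexity of $u_t$ in $t=\log r$, a Laplacian comparison, and the finiteness of the maximal directional Lelong number $M_A(u)$. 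To make your approach work you would need a replacement for the change of variables that only uses one angular coordinate (the Hopf fiber direction $\eta$) together with the $\bC\bP^1$ base, which is exactly what the paper's decomposition supplies.
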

The atomic mass of $(dd^c u)^n$ at the origin is called the \emph{residual Monge-Amp\`ere mass} of $u$,  
and we can write it as 
\begin{equation}
\label{int-002}
\tau_u(0): = \frac{1}{\pi^n}\mbox{MA}(u) (\{ 0 \}).
\end{equation}
Here the normalization is chosen in such a way that we have  
$\tau_{\log|z|}(0) =1$.  
Denote the Lelong number of $u$ at the origin by $\nu_u(0)$. 
Then the above Conjecture (\ref{conj-001}) can be rephrased as follows.  
\begin{conj}
\label{conj-002}
$\nu_u(0)=0 \Rightarrow \tau_u(0) =0\ ?$
\end{conj}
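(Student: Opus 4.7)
Since Conjecture~\ref{conj-002} as stated is the open problem motivating the paper, I will outline the approach for the $S^1$-invariant case in $\bC^2$ announced in the abstract. The starting point is to regularize $u$ by a decreasing sequence $u_j$ of smooth plurisubharmonic functions obtained from Demailly's analytic approximation, then averaged over the $S^1$-action so as to preserve both plurisubharmonicity and the symmetry. Because the singularity is isolated, the continuity of $(dd^c)^n$ along decreasing sequences due to Demailly \cite{Dem93} yields
\[
\tau_u(0) \;=\; \lim_{r \to 0^+}\, \lim_{j \to \infty}\, \frac{1}{\pi^2}\int_{B_r} (dd^c u_j)^2,
\]
and it suffices to bound this double limit by a quantity that vanishes when $\nu_u(0)=0$.

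The key idea is a change of coordinates adapted to the symmetry. In the log-polar variables $s_k = \log|z_k|^2$ and the angular variable $\theta = \arg z_2 - \arg z_1$, the diagonal $S^1$-invariance makes $u$ a function of the three real variables $(s_1,s_2,\theta)$. Plurisubharmonicity then amounts to the positivity of an explicit $2\times 2$ Hermitian matrix whose entries are combinations of the second derivatives of $u$ in $(s_1,s_2,\theta)$; in particular every slice $(s_1,s_2) \mapsto u(\cdot,\cdot,\theta_0)$ is convex. In this picture the Lelong number at the origin is the asymptotic radial slope $\nu_u(0) = \lim_{s \to -\infty} u(s,s,\theta)/(2s)$ (independent of $\theta$ by $S^1$-averaging), while the maximal directional Lelong number records the asymptotic slopes along individual complex lines. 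Computing $(dd^c u_j)^2$ in these coordinates expresses $\mathrm{MA}(u_j)$ as a push-forward of a three-variable Hessian-type measure, so the residual mass is controlled by the mass-at-infinity of the associated gradient map as $s_1, s_2 \to -\infty$; in the fully toric ($T^2$-invariant) subcase this is already enough, since the gradient image collapses when the radial slope vanishes.

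The hardest step will be to control the angular variable. Under only diagonal $S^1$-symmetry (rather than full $T^2$-symmetry), $u$ still depends non-trivially on $\theta$, and the mixed derivatives $u_\theta$, $u_{s_k \theta}$, $u_{\theta\theta}$ enter $(dd^c u_j)^2$ and can in principle create extra Dirac mass at $0$ even when the convex profile in $(s_1,s_2)$ degenerates. Controlling these cross-terms through the full plurisubharmonicity (rather than just convexity in $(s_1,s_2)$) is the technical crux, and is where I expect the maximal directional Lelong number to enter the estimate: it bounds the worst directional slope along complex lines, hence the contribution of the angular derivatives to the limiting mass. Once these cross-terms are tamed, passing to the limits $j \to \infty$ and $r \to 0^+$ by a monotone-convergence argument tailored to the push-forward decomposition should yield both the zero-mass statement when $\nu_u(0) = 0$ and the quantitative directional estimate promised in the abstract.
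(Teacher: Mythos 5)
Your proposal correctly identifies several of the right ingredients — the reduction of $S^1$-invariance to three real variables, the convexity along complex lines through the origin, the role of the maximal directional Lelong number, and the need to regularize — but it is a research plan, not a proof, and the step you flag as "the technical crux" is exactly the step you do not carry out. The paper resolves that crux via a specific decomposition: working in the \emph{Hopf coordinate} $(r,\eta,\zeta,\bar\zeta)$ adapted to the fibration $\bD^* \hookrightarrow B_1^* \to \bC\bP^1$ rather than your $(s_1,s_2,\theta)$, it shows (Theorem~\ref{thm-cal-001}) that the boundary integral splits cleanly as
\[
\frac{1}{\pi}\int_{S_R} d^c u\wedge dd^c u \;=\; 2\int_{\bC\bP^1}\dot u_t\,(\Delta_\omega u_t)\,\omega \;+\; \int_{\bC\bP^1}(\dot u_t)^2\,\omega,
\]
after an exact cancellation of cross-terms in the Hopf frame (compare equations~(\ref{cal-008}) and (\ref{cal-009})); there is no residual angular obstruction to "tame" once these coordinates are chosen. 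Your $(s_1,s_2,\theta)$ parametrization obscures the $\bC\bP^1$-base, and in those variables the analogous cancellation is not visible, which is precisely why you end up stuck on the mixed derivatives $u_{s_k\theta}$, $u_{\theta\theta}$.

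Beyond the coordinate choice, two concrete ingredients are missing. First, the decomposition formula by itself only gives the inequality $\pi^{-1}\,\mbox{MA}(u)(B_r)\le M_A\{I'_u(t)+2I_u(t)\}+J_u(t)$; to conclude one also needs Proposition~\ref{prop-lap-001}, a convexity argument showing $\liminf_{t\to-\infty}I'_u(t)=0$, which extracts a sequence $t_i\to-\infty$ along which the derivative term vanishes. Your proposal has no substitute for this. Second, your phrase "passing to the limits $j\to\infty$ and $r\to 0^+$ by a monotone-convergence argument" understates the difficulty of extending from $C^2$-regularity outside the origin to the general $\cF(B_1)$: the two limits do not commute cheaply, and the paper needs an a priori uniform bound $M_B(u_\ep)\le 2M_A(u)+C\ep$ (Lemma~\ref{lem-app-001}), a Friedrichs-type commutator estimate (Lemma~\ref{lem-gc-002}), slicing theory, and a contradiction argument on the finite interval $[-2A,-A]$ in the proof of Theorem~\ref{thm-002}. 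Finally, your formula $\nu_u(0)=\lim_{s\to-\infty}u(s,s,\theta)/(2s)$ is imprecise: the Lelong number is the asymptotic slope of the circular mean (or of $\dot u_t(\zeta)$ for a.e.\ $\zeta\in\bC\bP^1$), not a diagonal slice, and "independent of $\theta$ by $S^1$-averaging" conflates two different invariances.
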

For this reason, this problem is also called the \emph{zero mass conjecture}
for plurisubharmonic functions. 
In history, there have been many works that contribute to this problem, 
cf. \cite{Ceg02}, \cite{Ra01}, \cite{Ra13}, \cite{KR21}, \cite{BFJ07} and \cite{G10}. 
In particular, Rashkovskii \cite{Ra01} confirmed 
this conjecture, provided with toric symmetry on $u$. 
In this paper, we will study a more general symmetry called circular symmetry
for plurisubharmonic functions. 

Let $(z_1,\cdots, z_n)$ be the complex Euclidean coordinate on $\bC^n$, 
and then there is a natural $S^1$-action on it as 
\begin{equation}
\label{int-003}
z\rightarrow e^{i\theta}z: = (e^{i\theta}z_1,\cdots,e^{i\theta}z_n),  
\end{equation}
for all $\theta\in\bR$. 
A domain is balanced if it is invariant under this $S^1$-action. 
We say that a function $u$ on a balanced domain $D$ 
is circular symmetric, or $S^1$-invariant if
$$ u(e^{i\theta}z) = u(z),$$
for all $z\in D$. 
Then it is apparent that any toric symmetric function is also $S^1$-invariant. 

In fact, there is a deep connection between $S^1$-invariant plurisubharmonic 
functions and \emph{the Schwarz symmetrization} technique in classical analysis.
Berman and Berndtsson \cite{BB} proved that 
the Schwarz symmetrization of any $S^1$-invariant 
plurisubharmonic function is also plurisubharmonic. 
Moreover, the Lelong number at the origin 
is always increasing under this symmetrization, cf. \cite{Li19}.

On the other hand, this $S^1$-action is highly related to
\emph{the Hopf-fiberation} of the unit sphere $S^{2n-1}$ in $\bR^{2n} \cong \bC^n$, cf. \cite{GWZ86}. 
As the first attempt to utilize this geometric picture, 
we will restrict to $\bC^2$ in this paper, 
where the structure of the Hopf-fiberation 
$ S^1\hookrightarrow S^3\xrightarrow{p} S^2 $
is fully understood. 

For this reason, 
the domain $D$ is assumed to be the unit ball $B_1\subset \bC^2$ from now on. 
Then we introduce the family 
$\cF(B_1)$ (Definition (\ref{pre-def-001}))
as a collection  
of all $S^1$-invariant plurisubharmonic functions on $B_1$ that is $L^{\infty}_{loc}$ outside the origin. 
In order to perform calculus, 
we further introduce the family 
$\cF^{\infty}(B_1)$  (Definition (\ref{pre-def-002}))
as a sub-collection of $\cF(B_1)$
that is $C^2$-continuous outside the origin, 
and then we first confirm the zero mass conjecture for this family. 

\begin{theorem}[Theorem (\ref{thm-001})]
\label{thm-int-001}
For any function $u\in \cF^{\infty}(B_1)$, we have 
$$ \nu_u(0)=0 \Rightarrow \tau_u(0) =0. $$
\end{theorem}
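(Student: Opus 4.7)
The plan is to exploit the $S^1$-symmetry via the Hopf fibration to reduce the computation of $(dd^c u)^2$ to a $3$-dimensional quotient space, and then to prove that $\tau_u(0)=0$ by expressing the residual mass as a limit of boundary integrals on shrinking spheres $\partial B_r$ that can be controlled using the vanishing Lelong number. Concretely, on $B_1\setminus\{0\}\cong(0,1)\times S^3$ I would introduce coordinates $(r,\theta,\xi)$ in which $r=|z|$, $\theta$ parametrizes the Hopf $S^1$-fibers, and $\xi\in S^2$ is the base parameter. The $S^1$-invariance of $u$ then means $u$ is a $C^2$-function of $(r,\xi)$ only outside the origin. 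In this frame, $dd^c u$ splits into a part involving the radial and $S^2$-horizontal derivatives of $u$ plus a curvature-type contribution coming from the Hopf connection (whose exterior derivative is essentially the Fubini-Study form on $S^2$).

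The second step is to expand $(dd^c u)^2$ explicitly in this adapted frame and identify the Monge-Amp\`ere density as a positive combination of $u_{rr}$, the $S^2$-Hessian of $u(r,\cdot)$, and the Hopf curvature. Once this formula is in hand, Stokes' theorem (justified by the Bedford-Taylor-Demailly theory for psh functions with isolated unbounded locus) allows us to write
\[
\tau_u(0)=\lim_{r\to 0}\frac{1}{\pi^2}\int_{B_r}(dd^c u)^2=\lim_{r\to 0}\frac{1}{\pi^2}\int_{\partial B_r}d^c u\wedge dd^c u.
\]
Thus the task reduces to showing that this boundary integral vanishes as $r\to 0$.

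The hypothesis $\nu_u(0)=0$ enters via the reformulation $\sup_{\partial B_r}u/\log r\to 0$ as $r\to 0$, which by $S^1$-invariance uniformly controls $u(r,\xi)$ in $\xi$ up to $o(|\log r|)$. Combining this with the explicit expression for $d^c u\wedge dd^c u$ on $\partial B_r$ in the adapted coordinates, and using that $\int_{B_1\setminus B_r}(dd^c u)^2$ remains finite as $r\to 0$, I would bound the boundary integrand by a product of three factors: a term of order $o(|\log r|)$ coming from $u$ itself, a radial derivative factor, and a spherical density of total bounded mass. A careful bookkeeping of the $r$-scaling should then force the integral to zero.

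The main obstacle lies in controlling the $S^2$-horizontal second derivatives of $u$, which enter the Monge-Amp\`ere density but are not directly estimated by $\nu_u(0)$. The Lelong number only measures the logarithmic growth of the spherical maximum of $u$, and $S^1$-invariance alone permits sharp variation of $u$ in the $\xi$-direction near the origin. To close the estimate one will likely need either a plurisubharmonicity-based comparison of the horizontal Hessian with the radial second derivative, or a symmetrization/radialisation argument (as suggested by the Berman-Berndtsson result and \cite{Li19}) that replaces $u$ by a function with larger or equal residual mass but more controlled angular behaviour. This is presumably the same difficulty that gives rise to the quantitative estimate in terms of directional Lelong numbers announced in the abstract.
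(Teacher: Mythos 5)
Your overall framework coincides with the paper's: pass to Hopf-type coordinates on $B_1^*\cong(0,1)\times S^3$, exploit the $S^1$-invariance to reduce to functions of $(r,\xi)$ with $\xi\in\bC\bP^1$, and express $\tau_u(0)$ as the limit of the boundary integrals $\int_{S_r}d^c u\wedge dd^c u$. This is essentially Sections~3--4 of the paper. But you explicitly acknowledge that the key estimate is missing, and the way you propose to close it would not work. The issue is concrete and worth being precise about.

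The boundary integral decomposes (equation~(\ref{cal-012})) into two pieces:
\[
\frac{1}{\pi}\int_{S_R}d^cu\wedge dd^cu = 2\int_{\bC\bP^1}\dot{u}_t\,\Delta_\omega u_t\,\omega + \int_{\bC\bP^1}(\dot{u}_t)^2\,\omega,
\]
where $t=\log R$. The second piece is a genuine $L^2$-Lelong number that converges to $[\nu_u(0)]^2$ and vanishes under your hypothesis. The first piece involves the horizontal Laplacian $\Delta_\omega u_t$, which has no sign (only zero mean), so there is no "$o(|\log r|)$ times bounded spherical density" bookkeeping that controls it. The paper's resolution is two-pronged and both ingredients are absent from your sketch. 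First, a genuine identity (not just a heuristic splitting) relates the horizontal Laplacian to the radial one: $\Delta_e u = \Delta_r u + 2r^{-2}\Delta_\omega u$. Since $u$ is psh, $\Delta_e u\ge 0$, and since $\dot{u}_t\ge 0$ by convexity along complex lines, one trades $\Delta_\omega u$ for a term involving only the radial data $\ddot{u}_t + 2\dot{u}_t$, yielding the bound $\pi^{-1}\mathrm{MA}(u)(B_r)\le M_A\big(I'_u(t)+2I_u(t)\big)+J_u(t)$ with $M_A$ a finite directional slope. Second, and crucially, $I'_u(t)=\int\ddot{u}_t\,\omega$ does \emph{not} tend to zero pointwise in $t$; instead, Proposition~(\ref{prop-lap-001}) uses the fact that $I_u$ is $C^1$, non-negative, and monotone with finite limit to extract a sequence $t_i\to-\infty$ along which $I'_u(t_i)\to 0$. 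Combined with the monotonicity of $\mathrm{MA}(u)(B_r)$ in $r$, this forces $\tau_u(0)=0$.

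So the gap is not merely that you leave the horizontal-Hessian estimate "to be worked out"; your proposed closure (scaling bookkeeping, or a Schwarz symmetrization comparison) would stall. Scaling fails because $\dot{u}_t$ is the directional Lelong slope and does not vanish pointwise even when $\nu_u(0)=0$ (it only vanishes a.e.\ in the limit and in spherical mean), and the Laplacian term is oscillatory rather than small. Symmetrization increases the Lelong number and hence cannot be used to conclude zero residual mass from $\nu_u(0)=0$ without an extra argument. The ideas you would actually need are the positivity trick $\Delta_e u\ge 0$ combined with $\dot{u}_t\ge 0$, and the monotone-convergence subsequence extraction for $I'_u$; neither is present in the proposal.
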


The key observation is 
a decomposition formula (Theorem (\ref{thm-cal-001})) for the 
complex Monge-Amp\`ere mass. 
It decomposes the measure $(dd^c u)^2$ 
on the ball $B_R$ into two integrals on the boundary $S_R: = \d B_R$.  
The first integral corresponds to the so called 
\emph{pluricomplex energy} on $\bC\bP^1$ (Section (\ref{sub-001})),
and the second integral is a kind of $L^2$-Lelong number (Section (\ref{sub})). 
 
In fact, this decomposition formula 
implies a stronger result. 
For a general function $u\in \cF^{\infty}(B_1)$
without zero Lelong number condition, 
we obtain the following estimate 
on the residual Monge-Amp\`{e}re mass. 

\begin{theorem}[Theorem (\ref{thm-dn-001})]
\label{thm-int-0015}
For any function $u\in \cF^{\infty}(B_1)$, we have 
$$ [\nu_u(0)]^2 \leq \tau_{u}(0) \leq 2\lambda_u(0)\cdot \nu_u(0) + [\nu_u(0)]^2. $$
\end{theorem}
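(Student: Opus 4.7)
The approach is to apply the decomposition formula of Theorem (\ref{thm-cal-001}) to $u$ on the smaller ball $B_R$ for each $0<R<1$ and then send $R\to 0^+$. This writes
\begin{equation*}
\frac{1}{\pi^{2}}(dd^c u)^2(B_R)=\cE(R)+\cL(R),
\end{equation*}
where $\cE(R)$ is the pluricomplex energy on the $\bC\bP^1$-quotient of $S_R$ under the Hopf fibration, and $\cL(R)$ is the $L^2$-Lelong type term built from the radial derivative of $u$ on $S_R$. Since $\tau_u(0)=\lim_{R\to 0^+}\pi^{-2}(dd^c u)^2(B_R)$, both inequalities reduce to separately analysing the limits of $\cE(R)$ and $\cL(R)$.

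For the lower bound $[\nu_u(0)]^2\leq \tau_u(0)$, the first step is to note that $\cE(R)\geq 0$, since it is the total mass of a non-negative $(1,1)$-form on $\bC\bP^1$. It then remains to prove $\liminf_{R\to 0^+}\cL(R)\geq [\nu_u(0)]^2$. Writing $\cL(R)$ as a squared $L^2$-norm of the radial derivative $\partial_r u$ over $S_R$, Cauchy--Schwarz against the spherical volume element reduces the estimate to the square of the radial derivative of the spherical mean $\bar u(R)=|S_R|^{-1}\int_{S_R}u\,d\sigma$. Because $u$ is plurisubharmonic and $S^1$-invariant, $R\mapsto \bar u(R)$ is convex in $\log R$ with slope at $0^+$ equal to $\nu_u(0)$, which yields the desired asymptotic lower bound.

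For the upper bound, the decisive inequality is $\limsup_{R\to 0^+}\cE(R)\leq 2\lambda_u(0)\nu_u(0)$; the complementary estimate $\limsup_{R\to 0^+}\cL(R)\leq [\nu_u(0)]^2$ follows from the same convexity of the spherical mean. To handle $\cE$, let $u_R\colon \bC\bP^1\to[-\infty,+\infty)$ be the descent of $u|_{S_R}$ through the Hopf map. An integration by parts on $\bC\bP^1$ rewrites $\cE(R)$ as a bilinear pairing of $u_R-\bar u(R)$ against $dd^c u_R$. The oscillation $\sup_{\bC\bP^1}|u_R-\bar u(R)|$ is, by the definition of the maximal directional Lelong number, controlled by $\lambda_u(0)|\log R|+O(1)$, while $\int_{\bC\bP^1}dd^c u_R$ is asymptotic to a universal constant times $\nu_u(0)/|\log R|$ as $R\to 0^+$. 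Multiplying these two asymptotics produces the factor $2\lambda_u(0)\nu_u(0)$.

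The main obstacle is precisely this last asymptotic step: rigorously bounding the oscillation of $u_R$ on $\bC\bP^1$ by $\lambda_u(0)|\log R|$ plus a uniform error that still vanishes after pairing against $dd^c u_R$ of mass $\sim 1/|\log R|$. This is an asymptotic comparison between the behaviour of $u$ along complex lines (governed by directional Lelong numbers, hence bounded by $\lambda_u(0)$) and its behaviour on spheres (whose logarithmic slope is $\nu_u(0)$), and it is also where the sharp constant $2$ comes from, via the normalization of the Hopf fibration. Once this oscillation estimate is in place, both sides of the theorem follow by combining the bounds on $\cE(R)$ and $\cL(R)$ in the limit.
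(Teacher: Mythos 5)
Your starting point---apply the decomposition formula of Theorem (\ref{thm-cal-001}) and send $R\to 0$---is correct, but the execution has genuine gaps. You misread what the first term of the decomposition is: the quantity you call $\cE(R)$ is $2\int_{\bC\bP^1}\dot u_t\,\Delta_\omega u_t\,\omega=-\frac{d}{dt}\cE(u_t)$ (Lemma (\ref{lem-en-000})), the time \emph{derivative} of the pluricomplex energy, not the energy itself, and it is not sign-definite: $\dot u_t\geq 0$ but $\Delta_\omega u_t$ changes sign because $u_t$ is only quasi-plurisubharmonic on each fiber. For $u=c_1|z_1|^2+c_2|z_2|^2-1$ with $c_1\neq c_2$ one computes $\int_{\bC\bP^1}\dot u_t\,\Delta_\omega u_t\,\omega=-\tfrac{2\pi}{3}e^{4t}(c_1-c_2)^2<0$, so the claim ``$\cE(R)\geq 0$ since it is the total mass of a non-negative $(1,1)$-form'' is false, and your argument for $[\nu_u(0)]^2\leq\tau_u(0)$ collapses (the paper attributes that inequality to Cegrell's comparison theorem, not to the decomposition). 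For the upper bound, your asymptotic $\int_{\bC\bP^1}dd^c u_R\sim c\,\nu_u(0)/|\log R|$ cannot hold: for $u\in\cF^\infty(B_1)$ the descent $u_R$ is a globally defined $C^2$ function on $\bC\bP^1$, so $\int_{\bC\bP^1}dd^c u_R\equiv 0$. Nor can one rewrite the first term by one integration by parts as a pairing of $u_R-\bar u(R)$ against $dd^c u_R$, since the term involves $\dot u_t\,\Delta_\omega u_t$ rather than $u_t\,\Delta_\omega u_t$, and the radial derivative $\dot u_t$ is not determined by $u_t$ on the fiber. Finally, the inequality $\limsup_R\cL(R)\leq[\nu_u(0)]^2$ does not follow from ``the same convexity'' (which gives only the $\liminf$ direction via Cauchy--Schwarz); it requires the dominated convergence argument of Lemma (\ref{lem-cvx-002}), using the uniform bound $\dot u_t(\z)\leq M_A$ for $t<-A$.

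More fundamentally, a pointwise estimate valid for \emph{every} small $R$ is not attainable, and the heart of the paper's proof is a subsequence argument your proposal does not contain. Using the Laplacian decomposition (\ref{lap-004}), Lemma (\ref{lem-lap-001}) bounds $2\int_{\bC\bP^1}\dot u_t\,\Delta_\omega u_t\,\omega\leq M_A\bigl(I'_u(t)+2I_u(t)\bigr)$, where $I'_u(t)=\int_{\bC\bP^1}\ddot u_t\,\omega\geq 0$ has no reason to tend to zero in general. Proposition (\ref{prop-lap-001}) is the crucial device: because $I_u$ is non-negative, non-decreasing in $t$, and bounded as $t\to-\infty$, one can extract a sequence $t_i\to-\infty$ with $I'_u(t_i)\to 0$. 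The monotonicity of $\mbox{MA}(u)(B_r)$ in $r$ then makes testing along $r_i=e^{t_i}$ conclusive, giving $\tau_u(0)\leq 2M_A(u)\nu_u(0)+[\nu_u(0)]^2$ and, after $A\to+\infty$, the factor $\lambda_u(0)$. Your oscillation heuristic, even if made rigorous, would at best give a crude $\limsup$ bound and cannot control the unbounded $I'_u$ contribution without this extraction; the sharp coefficient $2$ in the theorem is produced precisely by the $2I_u(t)$ term surviving after $I'_u(t_i)\to 0$, not by a normalization of the Hopf fibration.
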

The one side inequality $[\nu_u(0)]^2 \leq \tau_{u}(0)$
was indicated by Cegrell, cf. \cite{Ceg04}, 
and it holds for a general function $u\in\cF(B_1)$.
The other side follows from 
our Lemma (\ref{lem-dn-000}). 
Here $\lambda_{u}(0)$ is the so called \emph{ maximal directional Lelong number}
of $u$ at the origin, cf. Definition (\ref{defn-dn-002}). 
It is actually the decreasing limit of $M_A(u)$ as $A\rightarrow +\infty$, 
where the constant $M_A(u)$ is 
the \emph{maximal directional Lelong number} of $u$ at the distance $A$, cf. Definition (\ref{defn-dn-001}). 
For a function $u\in \cF^{\infty}(B_1)$,
it is apparent that they are all non-negative real numbers.  
However, there is no a priori reason that 
they are finite for a general plurisubharmonic function. 

Surprisingly, 
the finiteness of $\lambda_u(0)$ and $M_A(u)$ are always true 
for a function $u\in \cF(B_1)$, cf. Lemma (\ref{lem-gc-001}). 
This is due to the following two facts: 
first, the function $u$ is bounded from above on the ball $B_1$
and from below on each boundary sphere $S_R$ with $R\in (0,1)$; 
second, the restriction of $u$ to each complex line through 
the origin is actually a convex function in the variable $t: = \log r\in (-\infty, 0)$. 
Therefore, the slope of the tangent line of this convex function 
can not grow arbitrarily large, 
when the angle of the line varies on $\bC\bP^1$. 

Then the idea is to utilize a regularization sequence $u_{\ep}\in \cF^{\infty}(B_1)$
to approximate the function $u\in \cF(B_1)$. 
The key step is to obtain a uniform estimate on the maximal directional Lelong number $M_B(u_{\ep})$ by $M_A(u)$ 
 for any constant $B>A>1$, cf. Lemma (\ref{lem-app-001}).
 These facts enable us to generalize Theorem (\ref{thm-int-001}) and (\ref{thm-int-0015}) as follows.

\begin{theorem}[Theorem (\ref{thm-002})]
\label{thm-int-002}
For any function $u\in \cF(B_1)$, 
we have 
$$ \nu_u(0)=0 \Rightarrow \tau_u(0) =0. $$
\end{theorem}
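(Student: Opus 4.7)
The plan is to reduce Theorem~\ref{thm-int-002} to Theorem~\ref{thm-int-0015} via the regularization scheme prepared in Lemmas~\ref{lem-gc-001} and~\ref{lem-app-001}. First I would approximate $u\in\cF(B_1)$ by a decreasing sequence $u_\ep\in\cF^{\infty}(B_1)$ that preserves the isolated singularity at the origin. Because $u$ is $S^1$-invariant, such a sequence can be produced by a standard smooth regularization (for example, a radial mollification whose kernel radius vanishes with the distance from the origin), averaged over the $S^1$-action if necessary; this is precisely the sequence appearing in Lemma~\ref{lem-app-001}.

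Second, I would feed each $u_\ep$ into Theorem~\ref{thm-int-0015}, obtaining
\begin{equation*}
 \tau_{u_\ep}(0)\ \le\ 2\,\lambda_{u_\ep}(0)\,\nu_{u_\ep}(0)+[\nu_{u_\ep}(0)]^{2}.
\end{equation*}
By upper semicontinuity of the Lelong number under decreasing limits of plurisubharmonic functions, $\nu_{u_\ep}(0)\le \nu_u(0)=0$, hence $\nu_{u_\ep}(0)=0$ for every $\ep$. Lemma~\ref{lem-gc-001} ensures that $\lambda_u(0)$ and $M_A(u)$ are finite, and Lemma~\ref{lem-app-001} then transfers finiteness uniformly to the approximants by bounding $\lambda_{u_\ep}(0)\le M_B(u_\ep)$ in terms of $M_A(u)$ for any fixed $B>A>1$. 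The right-hand side of the estimate therefore vanishes, giving $\tau_{u_\ep}(0)=0$ for every $\ep>0$.

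The main obstacle is transferring the vanishing from $u_\ep$ to $u$, since weak convergence of Monge-Amp\`ere measures need not preserve the atomic mass at a fixed point. The remedy is the decomposition formula of Theorem~\ref{thm-cal-001}, which expresses $(dd^c u)^{2}(B_R)$ as a sum of two boundary integrals on $S_R$ depending only on the $C^2$-jet of $u|_{S_R}$. Since $u_\ep\to u$ smoothly on $S_R$ for every fixed $R\in(0,1)$, the boundary integrals converge, so that
\begin{equation*}
 (dd^c u)^{2}(B_R)\ =\ \lim_{\ep\to 0}(dd^c u_\ep)^{2}(B_R).
\end{equation*}
As $R\to 0^+$, the monotone limit of $(dd^c u)^{2}(B_R)$ equals $\pi^{2}\tau_u(0)$, and similarly for each $u_\ep$. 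Taking the limits in the order $\ep\to 0$ first, then $R\to 0$, delivers $\pi^{2}\tau_u(0)\le\liminf_{\ep\to 0}\pi^{2}\tau_{u_\ep}(0)=0$. The delicate point that must be handled carefully is this interchange of limits, which I expect to carry out using the monotonicity of $R\mapsto (dd^c u)^{2}(B_R)$ together with the uniform control on $\lambda_{u_\ep}(0)$ from Lemma~\ref{lem-app-001}.
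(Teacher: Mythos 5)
Your plan — regularize, apply the smooth-case estimate, pass to the limit — points in the right direction, but as written it has a real gap that coincides with the heart of the difficulty.

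The trouble begins with step two. The standard mollification $u_\ep = u * \rho_\ep$ is smooth on all of $B_1$, including the origin; consequently $\nu_{u_\ep}(0)=0$ and $\tau_{u_\ep}(0)=0$ hold \emph{automatically}, for any psh $u$ whatsoever, with or without $\nu_u(0)=0$. Invoking Theorem~(\ref{thm-int-0015}) on $u_\ep$ therefore yields no information: the conclusion $\tau_{u_\ep}(0)=0$ is trivially true because $u_\ep$ has no singularity. (The same objection applies if you use a mollification with vanishing kernel radius: either $u_\ep$ is smooth at $0$ and the estimate is vacuous, or you need to verify separately that the new sequence belongs to $\cF^\infty(B_1)$ and that its Lelong number at $0$ is zero, neither of which you establish.) This already signals that reducing to the smooth theorem cannot by itself succeed.

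The second, decisive gap is the limit interchange you flag at the end. You would need $\pi^2\tau_u(0)\le\liminf_{\ep\to 0}\pi^2\tau_{u_\ep}(0)$, which is false in general: if it held for arbitrary psh $u$, mollification would give the zero mass conjecture for \emph{all} isolated psh singularities, with no symmetry needed — yet this is exactly the open problem. Lemma~(\ref{lem-rm-001}) and the decomposition formula only give convergence of $\mathrm{MA}(u_\ep)(B_R)\to\mathrm{MA}(u)(B_R)$ for each \emph{fixed} $R$, which produces the double limit in the wrong order ($\ep\to 0$ first, then $R\to 0$) and yields $\pi^2\tau_u(0)$, not $\liminf_\ep\pi^2\tau_{u_\ep}(0)$. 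Monotonicity of $R\mapsto\mathrm{MA}(u)(B_R)$ and the bound on $M_B(u_\ep)$ from Lemma~(\ref{lem-app-001}) are necessary ingredients, but you do not actually use them to close the gap.

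The paper's proof of Theorem~(\ref{thm-002}) works quite differently. It is a contradiction argument that never tries to compare $\tau_{u_\ep}(0)$ with $\tau_u(0)$. Instead, it fixes a scale window $t\in[-2A,-A]$ once and for all (large $A$ chosen so that $I_u$, $J_u$ are small on $(-\infty,-A]$), verifies via Lemma~(\ref{lem-gc-003}) and Lemma~(\ref{lem-app-001}) that $I_{u_\ep}$, $J_{u_\ep}$, $M_B(u_\ep)$ are uniformly controlled on that window for $\ep$ small, and then uses the decomposition inequality~(\ref{dn-001}) in the rearranged form~(\ref{app-004}) to force $I'_{u_\ep}(t)\ge 4\delta$ there if $\tau_u(0)>0$. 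Integrating this lower bound over an interval of length $A$ makes $I_{u_\ep}(-A)$ grow without bound as $A$ is chosen large, contradicting condition~(ii). The crucial point is that the decomposition formula is only ever applied to $u_\ep$ at scales \emph{bounded away from the origin}, where $u_\ep$ genuinely approximates $u$ — precisely to sidestep the limit interchange you would need.

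In short: your proposal has the right ingredients in its toolbox (regularization, the decomposition formula, the uniform control from Lemma~(\ref{lem-app-001})), but the argument it assembles from them is circular at step two and unfinished (and, unrepaired, false) at the limit interchange. The contradiction scheme the paper uses is not an optional alternative here; it is what makes the regularization strategy actually close.
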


\begin{theorem}[Theorem (\ref{thm-pln-001})]
\label{thm-int-0025}
For any function $u\in \cF(B_1)$, we have 
$$ [\nu_u(0)]^2 \leq  \tau_{u}(0) \leq 2\lambda_u(0)\cdot \nu_u(0) + [\nu_u(0)]^2. $$
\end{theorem}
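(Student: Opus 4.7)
The lower bound $[\nu_u(0)]^2 \leq \tau_u(0)$ is Cegrell's inequality \cite{Ceg04}, which is already valid for every $u \in \cF(B_1)$, so only the upper estimate is new. The plan is to approximate $u$ by a smooth sequence $u_\ep \in \cF^\infty(B_1)$, apply Theorem \ref{thm-int-0015} together with the boundary decomposition of Theorem \ref{thm-cal-001} to each $u_\ep$, and then pass to the limit using the uniform control supplied by Lemmas \ref{lem-gc-001} and \ref{lem-app-001}.

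\smallskip

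First I would take an $S^1$-invariant mollification $u_\ep \searrow u$ defined on slightly smaller balls after rescaling. Standard properties of Lelong numbers give $\nu_{u_\ep}(0) \nearrow \nu_u(0)$, and Lemma \ref{lem-app-001} yields $M_B(u_\ep) \leq M_A(u)$ for all $B > A > 1$ and $\ep$ small. Combined with the finiteness assertions of Lemma \ref{lem-gc-001}, letting $B \to +\infty$ gives $\lambda_{u_\ep}(0) \leq M_A(u) < +\infty$ uniformly in $\ep$.

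\smallskip

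Next, because $u_\ep \searrow u$ does not in general force $\tau_{u_\ep}(0) \to \tau_u(0)$ (the Dirac mass at $0$ is typically smeared to a small sphere by the smoothing), I would avoid a pointwise weak-limit at $\{0\}$ and work instead on balls $B_R$ with $R \in (0,1)$. For a full-measure set of radii one has $(dd^c u)^2(\d B_R) = 0$, and Bedford--Taylor weak convergence then yields $(dd^c u_\ep)^2(B_R) \to (dd^c u)^2(B_R)$. On the other hand, Theorem \ref{thm-cal-001} applied to $u_\ep$ expresses $(dd^c u_\ep)^2(B_R)$ as the sum of a pluricomplex-energy piece and an $L^2$-Lelong piece on $S_R$, each controlled, as in the proof of Theorem \ref{thm-int-0015}, by $M_B(u_\ep) \leq M_A(u)$ and $\nu_{u_\ep}(0) \leq \nu_u(0)$. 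Passing to the limit $\ep \to 0$ extends the decomposition and the resulting estimate to $u$ on each such $B_R$.

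\smallskip

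Finally, letting $R \to 0$ along admissible radii contracts the left side to $\pi^2\tau_u(0)$, since $u$ is locally bounded off the origin and the continuous part of $(dd^c u)^2$ on $B_R$ vanishes in the limit; letting $A \to +\infty$ replaces $M_A(u)$ by $\lambda_u(0)$ via Lemma \ref{lem-gc-001}. This yields the desired
\begin{equation*}
\tau_u(0) \leq 2\lambda_u(0)\cdot\nu_u(0) + [\nu_u(0)]^2.
\end{equation*}
The main obstacle will be the passage $\ep \to 0$ in the pluricomplex-energy boundary term on $S_R$: it involves second derivatives of $u_\ep$ along $\bC\bP^1$-directions and is not directly covered by weak convergence of $(dd^c u_\ep)^2$. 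The uniform directional bound of Lemma \ref{lem-app-001}, together with the convexity of $u$ in $\log|z|$ along each complex line (which underlies Lemma \ref{lem-gc-001}), is the tool I expect to supply the required equicontinuity on $S_R$.
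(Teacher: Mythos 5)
Your overall plan -- mollify to $u_\ep\in\cF^\infty(B_1)$, invoke the $\cF^\infty$-case, then pass to the limit using Lemmas (\ref{lem-gc-001}), (\ref{lem-app-001}), (\ref{lem-gc-003}) -- is the right raw material, and the paper uses exactly those tools. But there are two genuine gaps in the way you combine them.

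First, the claim $\nu_{u_\ep}(0)\nearrow\nu_u(0)$ is false. Each $u_\ep=u*\rho_\ep$ is smooth and bounded near the origin, so $\nu_{u_\ep}(0)=0$ and $\tau_{u_\ep}(0)=0$ for every $\ep>0$; applying Theorem (\ref{thm-int-0015}) to $u_\ep$ therefore gives the vacuous $0\leq 0$ and transmits nothing to $u$. What one can salvage from the regularization are the \emph{fixed-radius} quantities $I_{u_\ep}(t)$, $J_{u_\ep}(t)$: Lemma (\ref{lem-gc-003}) shows these converge along a subsequence to $I_u(t)$, $J_u(t)$ for almost every $t$, and it is these, not the Lelong numbers of $u_\ep$, that the argument must run on.

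Second, and more seriously, you misidentify the hard term. The pluricomplex-energy piece of the decomposition formula (\ref{thm-cal-001}) is estimated via Lemma (\ref{lem-lap-001}), which trades $\int_{\bC\bP^1}\dot u_t\,\Delta_\omega u_t\,\omega$ for $M_A\bigl(I'_u(t)+2I_u(t)\bigr)$, with $I'_u(t)=\int_{\bC\bP^1}\ddot u_t\,\omega$ a \emph{radial} second derivative. At a fixed radius $R=e^T$ the term $M_A\cdot I'_{u_\ep}(T)$ has no reason to be small; it survives both limits $\ep\to0$ and $R\to0$ unless one produces radii along which $I'_u$ tends to zero. For $u\in\cF^\infty$ this is Proposition (\ref{prop-lap-001}), which uses $I_u\in C^1$. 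For general $u\in\cF(B_1)$, $I_u$ is merely a bounded non-decreasing $L^\infty$-function, and the paper does not try to locate such radii. Instead it argues by contradiction: supposing the inequality fails by a margin $\delta$, it \emph{inverts} estimate (\ref{dn-001}) to read $I'_{u_\ep}(t)\geq 4\delta+O(\kappa)$ on the \emph{entire} interval $[-2A,-A]$ (conditions (v)--(viii), with Lemma (\ref{lem-app-001}) supplying the uniform $M_B(u_\ep)\leq(1+\beta)M_{A_0}(u)+\kappa$), then integrates this constant lower bound over an interval of growing length $A>4$ to inflate $I_{u_\ep}(-A)$ beyond what the convergence $I_u(t)\to\pi\nu_u(0)$ and Lemma (\ref{lem-gc-003}) permit. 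Your direct-limit plan has no substitute for this integration step, and "equicontinuity on $S_R$" does not remove the $I'_u$ term; the contradiction-plus-integration is precisely what lets the paper avoid ever giving $I'_u$ a pointwise meaning.
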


In order to illustrate the inequality in the above Theorem,
we compute several cases, 
including Demailly's example \cite{Dem93}
and Chi Li's construction \cite{Chi21},  
at the end of Section (\ref{sec02}). 
Moreover, we provide an counter-example of 
this inequality, when the function $u$ is no longer $S^1$-invariant. 

There is another point of view to look at a function $u\in \cF(B_1)$
via a variational approach. 
First we recall a few basic facts in K\"ahler geometry, 
cf. \cite{Sem74}, \cite{Don97}, \cite{C00} and \cite{Li22}. 
Consider a sub-geodesic ray in the space of K\"ahler potentials on $\bC\bP^1$. 
It is actually a local plurisubharmonic function $u$ on the product space 
$\bD^* \times \bC\bP^1$ that is $S^1$-invariant in the argument direction of $\bD^*$. 
Moreover, it is a geodesic ray 
if the following \emph{homogeneous complex Monge-Amp\`ere equation} holds 
\begin{equation}
\label{int-004}
(dd^c u)^2 = 0,
\end{equation}
on the product $\bD^* \times \bC\bP^1$. 

On the other hand, 
we note that the punctured disk $\bD^*$ acts on $B_1^*\subset \bC^2$ 
in a natural way. 
Then the punctured ball $B_1^*$ 
can be thought of as a non-trivial 
$\bD^*$-fiberation over $\bC\bP^1$, 
i.e. we have the following fiber bundle structure 
\begin{equation}
\label{int-005}
\bD^* \hookrightarrow B_1^* \xrightarrow{p} \bC\bP^1. 
\end{equation}

Comparing with the manifold $\bD^*\times \bC\bP^1$,
we have a simpler total space since the Euclidean metric on $B_1^*$ is flat. 
However, the fiberation structure 
corresponds to the Hopf-fiberation that is more complicated. 
In particular, the usual complex structure 
on $B_1^*\subset\bC^2$ is no longer 
a product of the complex structures on $\bD^*$ and $\bC\bP^1$.

In this way, a function $u\in\cF(B_1)$
can be viewed as a sub-geodesic ray 
on this non-trivial $\bD^*$-bundle (Definition (\ref{def-cf-001})), 
and it is a geodesic ray on this bundle if equation (\ref{int-004}) holds on $B_1^*$. 

This observation leads us to a 
new understanding about the decomposition formula
and the zero mass conjecture. 
In fact, the decomposition formula 
can be fit into an energy picture as shown in Theorem (\ref{thm-en-001}).
There a sub-geodesic ray corresponds to a convex energy functional 
on $(-\infty, 0)$, 
and geodesic rays are exactly the affine ones. 
The zero mass conjecture has also been rephrased  
under this picture, 
and it describes simultaneous 
zero asymptotic behaviors of two energy functionals as in Theorem (\ref{thm-en-002}).

Finally, we would like to point out 
that the decomposition formula 
(Theorem (\ref{thm-cal-001}))  
is very likely to be generalized to all dimensions. 
Then 
the zero mass conjecture 
can be proved in $\bC^n$ for all $S^1$-invariant plurisubharmonic functions in a similar manner. 
Moreover, an estimate on the residual mass as in Theorem (\ref{thm-int-0025}) may also be obtained
in higher dimensions.

\bigskip
\bigskip
\bigskip

\textbf{Acknowledgment: }
The author is very grateful 
to Prof. Xiuxiong Chen and Prof. Mihai P\u aun
for their continuous support and encouragement in mathematics. 
This problem has been raised to the author when he was studying in Fourier Institute, Grenoble. 
It is also a great pleasure to thank Chengjian Yao, Xiaojun Wu, Jian Wang, Wei Sun for lots of 
useful discussions.

Finally, the author wishes to thank Prof. Berndtsson, Prof. Rashkovskii, Prof. Xiangyu Zhou and Prof. Chi Li
for their valuable suggestions on the first version of this paper.


\bigskip

\section{ Plurisubharmonic functions with isolated singularity}

Denote $z: = (z_1, z_2)$ by the complex Euclidean coordinate on $\bC^2$. 
There is a natural $S^1$-action on it as  
$$ z \rightarrow e^{ i\theta } z,  $$
where $e^{i\theta} z: = (e^{i\theta} z_1, e^{i\theta} z_2)$, and $\theta$ is an arbitrary real number. 
We say that a domain $D$ is balanced if it is invariant under this action.
Moreover, a function $u$ on a balanced domain is said to be $S^1$-invariant if for all $z\in D$
$$u(e^{i\theta }z ) = u(z). $$ 

Assume that the origin $0\in \bC^2$ is contained in a balanced domain $D$, 
and we denote $D^*$ by the set $D - \{ 0 \}$. 
Consider a plurisubharmonic function $u$ on $D$,
and we adapt to the following definition. 

\begin{defn}
\label{pre-def-001}
A plurisubharmonic function $u$ on $D$ belongs to the family 
$\cF(D)$, if it is $S^1$-invariant and $L_{loc}^{\infty}$ on $D^*$. 
\end{defn}

We say that $u$ has an isolated singularity at the origin, if $u\in \cF(D)$ and $u (0) = -\infty$.  
In order to preform calculus,
we also introduce the following collection of functions with better regularities.


\begin{defn}
\label{pre-def-002}
A plurisubharmonic function $u$ on $D$ belongs to the family 
$\cF^{\infty}(D)$, if it is $S^1$-invariant and $C^2$-continuous on $D^*$.


\end{defn}

We note that a function $u\in \cF^{\infty}(D)$ also belongs to the family $\cF(D)$.
By shrinking $D$ to a smaller balanced domain if necessary, 
a function $u\in \cF(D)$, or $\cF^{\infty}(D)$ 
always has an upper bound. 
After adjusting a constant, we can further assume the following normalization condition 
$$\sup_D u \leq -1, $$
for all $u\in \cF(D)$, or $\cF^{\infty}(D)$.

\subsection{The residual mass}
In the following, 
we will focus on the local behavior of a plurisubharmonic function $u$
near the origin. 
To this purpose, 
it is enough to consider the balanced domain $D$
as a small ball centered at the origin.

Let $B_R\subset\bC^2$ be the open ball  with radius $R$ centered at the origin, 
and $B^*_R: = B_R - \{0\}$ be the corresponding punctured ball. 
Denote its boundary as $S_R: = \d B_R$,
and then $S_R$ is actually a $3$-sphere in $\bR^4$. 

Obviously, these two domains $B_R$, $B^*_R$ are balanced for each $R>0$, 
and then we can consider plurisubharmonic functions in the family $ \cF(B_R)$ and $\cF^{\infty}(B_R)$.

Thanks to Demailly's work \cite{Dem93}, 
the complex Monge-Amp\`ere measure 
of $u\in\cF(B_1)$ or $\cF^{\infty}(B_1)$
is well defined, namely, the following wedge product is  a bidegree-$(2,2)$ closed positive current 
$$  \text{MA}(u): = dd^c u \wedge dd^c u, $$
and then it is also a positive Borel measure on $B_1$. 
(Here we have used 
$dd^c = i \d\dbar$). 
Fixing an $R\in(0,1)$, we take this measure on the ball as 
\begin{equation}
\label{rm-000}
 \text{MA}(u)(B_R): = \int_{B_R} (dd^c u)^2 = \int \chi_{B_R} (dd^c u)^2,
\end{equation}
where $\chi_{B_R}$ is the characteristic function of the ball $B_R$. 
Then it builds a decreasing sequence of non-negative real numbers as $R\rightarrow 0$.
Thanks to the dominated convergence theorem,
this limit is exactly the residual Monge-Amp\`ere mass of $u$ at the origin, i.e. we have  
\begin{equation}
\label{rm-001}
 \tau_{u} (0) = \frac{1}{\pi^2} \lim_{R\rightarrow 0}  \text{MA}(u)(B_R). 
\end{equation}

In order to calculate the measure in equation (\ref{rm-000}), 
we first observe the following analogue of the Portemanteau theorem. 

\begin{lemma}
\label{lem-rm-001}
Suppose $u_j$ is a sequence of smooth plurisubharmonic functions on $B_1$, decreasing to $u\in\cF^{\infty}(B_1)$. 
Then we have 
\begin{equation}
\label{rm-002}
\emph{MA}(u) (B_R)= \lim_{j\rightarrow +\infty} \emph{MA}(u_j)(B_R),
\end{equation}
for all $R\in (0,1)$. 
\end{lemma}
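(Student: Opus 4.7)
The plan is to deduce the identity from the standard theory of weak convergence of Monge--Amp\`ere measures, together with a regularity argument that rules out boundary concentration on the sphere $S_R$.

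First I would invoke Demailly's convergence theorem for Monge--Amp\`ere operators on plurisubharmonic functions with compactly contained unbounded locus. Each $u_j$ is smooth (hence locally bounded) on $B_1$, and the limit $u\in\cF^{\infty}(B_1)$ has unbounded locus contained in $\{0\}$, which is compact in $B_1$. Since $u_j\downarrow u$ monotonically, Demailly's theorem gives weak convergence of positive measures
\begin{equation*}
(dd^c u_j)^2 \longrightarrow (dd^c u)^2 \quad \text{on } B_1.
\end{equation*}

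Next I would verify that the limit measure $\mathrm{MA}(u)$ puts no mass on the sphere $S_R=\partial B_R$ for any $R\in(0,1)$. This is where the hypothesis $u\in\cF^{\infty}(B_1)$ enters decisively: since $u$ is $C^2$ on the punctured ball $B_1^{*}$, the sphere $S_R$ lies in the region where $u$ is smooth, so in a neighborhood of $S_R$ the measure $(dd^c u)^2$ is given by a continuous density against Lebesgue measure on $\bR^4$. Any such absolutely continuous measure vanishes on the three-dimensional hypersurface $S_R$.

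The final step is a direct application of the Portmanteau characterization of weak convergence: if $\mu_j\to\mu$ weakly and $\mu(\partial A)=0$ on a relatively compact Borel set $A$, then $\mu_j(A)\to\mu(A)$. Taking $A=B_R$ and $\mu=\mathrm{MA}(u)$, $\mu_j=\mathrm{MA}(u_j)$, we obtain the claimed equality. For completeness I would spell out both halves: the open-set inequality $\liminf \mathrm{MA}(u_j)(B_R)\geq \mathrm{MA}(u)(B_R)$, and the closed-set inequality $\limsup \mathrm{MA}(u_j)(\overline{B_R})\leq \mathrm{MA}(u)(\overline{B_R})$, noting that $\mathrm{MA}(u)(\overline{B_R})=\mathrm{MA}(u)(B_R)$ by the second step.

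The only delicate point is the verification that $\mathrm{MA}(u)$ does not charge $S_R$; this is the reason the lemma is stated for the subclass $\cF^{\infty}(B_1)$ rather than $\cF(B_1)$, since for the latter class smoothness away from the origin is no longer available and one would need to argue, for instance, that the set of $R$ for which the sphere is charged is at most countable. I do not expect any other serious obstacle.
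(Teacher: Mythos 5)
Your proposal is correct and follows essentially the same route as the paper: both rely on Demailly's weak convergence $(dd^c u_j)^2\to(dd^c u)^2$, on the observation that $\mathrm{MA}(u)$ does not charge $S_R$ because $u$ is $C^2$ near the sphere, and on the Portmanteau criterion (which the paper states explicitly as the lemma's motivation and then verifies by hand with cutoff functions $\chi_k$ rather than citing the classical result). The only difference is presentational: you invoke Portmanteau as a black box, while the paper rederives the relevant one-sided inequalities directly.
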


\begin{proof}
It is enough to prove the following two inequalities. 
First, we claim  
\begin{equation}
\label{rm-003}
\text{MA}(u)(\overline{B}_R) \geq  \limsup_{j\rightarrow +\infty} \text{MA}(u_j)(\overline{B}_R),
\end{equation}
on any closed ball $\overline{B}_R$ in $B_1$. 
Second, we claim 
\begin{equation}
\label{rm-004}
\text{MA}(u)(B_R) \leq  \liminf_{j\rightarrow +\infty} \text{MA}(u_j)(B_R),
\end{equation}
on any open ball $B_R \subsetneq B_1$. 
Then we have 
\begin{equation}
\label{rm-0045}
 \text{MA}(u)(\overline{B}_R) =  \text{MA}(u)(B_R),
\end{equation}
since $u$ is $C^2$-continuous near the boundary $S_R$. 
Hence our result follows from equation (\ref{rm-003}) and (\ref{rm-004}).  

To prove the first claim, we observe that 
there exists a sequence of smooth cut off functions $\chi_k$,
such that $\chi_k = 1$ on $\overline{B}_R$ and $\chi_k = 0$ outside of $B_{R+ \frac{1}{k}}$. 
Therefore, we have for a fixed $k$
\begin{eqnarray}
\label{rm-005}
\limsup_{j\rightarrow +\infty} \text{MA}(u_j)(\overline{B}_R) &=& \limsup_{j\rightarrow +\infty} \int \chi_{\overline{B}_R}(dd^c u_j)^2
\nonumber\\
&\leq &  \limsup_{j\rightarrow +\infty} \int \chi_{k}(dd^c u_j)^2
\nonumber\\
&= & \int \chi_k (dd^c u)^2 \leq \text{MA}(u)(B_{R+\frac{1}{k}}).  
\end{eqnarray}
The equality on the last line of the above equation
follows from the convergence $(dd^c u_j)^2 \rightarrow (dd^c u)^2$ in the sense of currents, cf. \cite{Dem93}.  

Finally, the inequality (equation (\ref{rm-003})) follows by taking $k\rightarrow +\infty$ in equation (\ref{rm-005}).
The second claim (equation (\ref{rm-004})) can also be proved in a similar way.

\end{proof}

\begin{rem}
\label{rem-rm-001}
For a general $u\in \cF(B_1)$, Lemma (\ref{lem-rm-001}) may fail to be true for all $R\in (0,1)$.  
However, if the Monge-Amp\`ere measure of $u$ has no mass on the boundary of a ball, 
namely, we assume  
$$  \int_{S_R} (dd^c u)^2 = 0, $$
for a fixed $R$, then equation (\ref{rm-0045}) still holds, 
and the convergence (equation (\ref{rm-002})) follows from the same argument. 
\end{rem}

Another advantage to deal with the family $\cF^{\infty}(B_R)$
is that we can perform integration by parts on the current $(dd^c u)^2$ as follows.

\begin{prop}
\label{prop-rm-001}
For a plurisubharmonic function $u\in\cF^{\infty}(B_1)$, 
we have 
\begin{equation}
\label{pre-001}
\int_{B_R} (dd^c u)^2 = \int_{S_R} d^c u \wedge dd^c u,
\end{equation}
for all $R\in (0,1)$. 

\end{prop}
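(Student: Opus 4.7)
My plan is to use the smoothing approximation already justified by Lemma (\ref{lem-rm-001}), combined with classical Stokes' theorem in the smooth case, and then pass to the limit on both sides. The point is that $u$ has only one bad point (the origin), and since $S_R\subset B_1^*$ for $R\in(0,1)$, the boundary sphere sits comfortably in the region where $u$ is $C^2$; both integrands in the desired identity can therefore be controlled uniformly near $S_R$.

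First I would construct a decreasing sequence of smooth plurisubharmonic approximants. Let $\rho$ be a smooth, nonnegative, radially symmetric mollifier supported in the unit ball of $\bC^2$ with $\int\rho\, dV=1$, and set
$$u_\ep(z) := \int_{\bC^2} u(z+\ep w)\,\rho(w)\,dV(w),$$
defined on $B_{1-\ep}$. By the sub-mean-value property of plurisubharmonic functions, $u_\ep$ is smooth, plurisubharmonic, and decreases pointwise to $u$ as $\ep\to 0^+$. Moreover, since $u\in\cF^\infty(B_1)$ is $C^2$ on $B_1^*$, on any closed annular neighborhood of $S_R$ lying inside $B_1^*$ the convergence $u_\ep\to u$ is in $C^2$.

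For each small $\ep>0$, the function $u_\ep$ is smooth on a neighborhood of $\overline{B}_R$, so Stokes' theorem applies directly. Using $d(dd^c u_\ep)=0$ we get $d(d^c u_\ep\wedge dd^c u_\ep)=(dd^c u_\ep)^2$, hence
$$\int_{B_R}(dd^c u_\ep)^2 = \int_{S_R} d^c u_\ep \wedge dd^c u_\ep.$$
Now I would take $\ep\to 0$. For the left-hand side, Lemma (\ref{lem-rm-001}) immediately gives convergence to $\int_{B_R}(dd^c u)^2$. For the right-hand side, the $C^2$-convergence of $u_\ep\to u$ near $S_R$ gives $d^c u_\ep\to d^c u$ in $C^1$ and $dd^c u_\ep\to dd^c u$ in $C^0$ on a neighborhood of the compact set $S_R$; consequently $d^c u_\ep\wedge dd^c u_\ep\to d^c u\wedge dd^c u$ uniformly on $S_R$, which yields convergence of the boundary integral and hence the identity (\ref{pre-001}).

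The only technical point requiring care is the $C^2$-convergence of the mollification in a neighborhood of $S_R$ that is bounded away from the origin; this is standard once one chooses $\ep$ smaller than both $1-R$ and the distance from $S_R$ to the origin, so that the convolution is computed only over points where $u$ is $C^2$. Everything else is automatic from Stokes for smooth functions and the weak continuity of the Monge-Amp\`ere operator on decreasing sequences provided by Demailly's theory.
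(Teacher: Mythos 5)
Your proposal is correct and follows essentially the same route as the paper: mollify to get a smooth decreasing sequence $u_\ep$, apply Stokes for smooth psh functions, pass to the limit on the left via Lemma (\ref{lem-rm-001}), and on the right via $C^2$-convergence of $u_\ep\to u$ on a compact neighborhood of $S_R$ inside $B_1^*$. No substantive differences.
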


\begin{proof}
Let $\rho(z): = \rho (|z|) \in C^{\infty}_0 (\bC^2)$ be a non-negative function,
satisfying $\rho(r) = 0$ for $r\geq 1$, and $\int_{\bC^2}\rho\ d\lambda= 1$. 
For each $\ep>0$ small, we rescale it as 
$$ \rho_{\ep} (z): = \ep^{-4} \rho(z/\ep ). $$
Then we have the following standard regularization of $u\in \cF^{\infty}(B_1)$ by convolution,
namely, we have on any closed ball contained in $B_1$ 
\begin{eqnarray}
\label{rm-006}
u_{\ep}(z): &=& (u \ast \rho_{\ep})(z) 
\nonumber\\
&=& \int_{|z-y|\leq \ep} \rho_{\ep} (z-y) u(y) d\lambda(y)
\nonumber\\
&=& \int_{|w|\leq 1} u(z- \ep w) \rho(w) d\lambda(w). 
\end{eqnarray}
Hence $u_{\ep}(z)$ is a sequence of smooth plurisubharmonic functions, decreasing to $u(z)$ as $\ep\rightarrow 0$. 
Moreover, we have 
$u_{\ep}\rightarrow u$ uniformly in $C^2$-norm on any compact subset in $B^*_1$. 

Due to Stokes' theorem, we compute  
\begin{equation}
\label{rm-007}
\int_{B_R} (dd^c u_{\ep})^2 = \int_{S_R} d^c u_{\ep} \wedge dd^c u_{\ep},
\end{equation}
on any ball $B_R\subsetneq B_1$. 
Thanks to Lemma (\ref{lem-rm-001}), 
The L.H.S. of equation (\ref{rm-007}) converges to 
the measure $\text{MA}(u)(B_R)$. 
Moreover, 
the R.H.S. of this equation converges to the desired integral
$$  \int_{S_R} d^c u \wedge dd^c u, $$
since $u_{\ep}\rightarrow u$ uniformly in $C^2$-norm near the sphere $S_R$. 
Then our result follows. 
\end{proof}

Take an $S^1$-action on the regularization $u_{\ep}(z)$, we note that 
it is also invariant under this action. 
In fact, if put $w' = e^{-i\theta} w$,  then we have  
\begin{eqnarray}
u_{\ep}(e^{i\theta}z)& = & \int u(e^{i\theta}z- \ep w) \rho(w) d\lambda(w)
 \nonumber\\
 &= & \int u\{ e^{i\theta}(z- \ep w') \}  \rho(w') d\lambda(w')
 \nonumber\\
 &=& \int u (z- \ep w')\rho(w') d\lambda(w')
 = u_{\ep}(z).  
\end{eqnarray}
Therefore, the following result holds.  
\begin{cor}
\label{cor-rm-001}
For any function $u\in \cF(B_1)$, 
there exists a sequence of $S^1$-invariant smooth plurisubharmonic function $u_j$ decreasing pointwise to $u$,
possible on a slightly smaller ball. 
\end{cor}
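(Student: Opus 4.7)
The plan is to recycle the convolution regularization $u_{\ep} := u \ast \rho_{\ep}$ introduced in equation (\ref{rm-006}) and observe that, because $\rho$ was chosen radial, everything in the proof of Proposition (\ref{prop-rm-001}) still goes through for a merely $L^{\infty}_{loc}$ (on $B_1^*$) function $u \in \cF(B_1)$, except the $C^2$ convergence, which is not needed here. Concretely, fix any $R \in (0,1)$ and any $\ep < 1-R$; then $u_{\ep}$ is well defined and $C^{\infty}$ on $B_R$, it is plurisubharmonic because convolution with a nonnegative smooth kernel preserves plurisubharmonicity, and it decreases to $u$ pointwise as $\ep \downarrow 0$. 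The monotone decrease is the standard consequence of writing $u_{\ep}(z)$ as a weighted average of spherical means of $u$ over spheres of radius $\ep r$ with $r \in (0,1)$ (using that $\rho$ is radial), combined with the sub-mean-value property of plurisubharmonic functions.

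The substantive point is the $S^{1}$-invariance of each $u_{\ep}$, and this has already been verified in the displayed computation immediately preceding the statement: the change of variables $w' = e^{-i\theta} w$ preserves Lebesgue measure and fixes the radial kernel $\rho(w)$, while the $S^1$-invariance of $u$ turns $u(e^{i\theta}z - \ep w)$ into $u(z - \ep w')$. Therefore, choosing any sequence $\ep_j \downarrow 0$ with $\ep_j < 1-R$ and setting $u_j := u_{\ep_j}$ produces the desired sequence of $S^1$-invariant smooth plurisubharmonic functions on $B_R$ decreasing pointwise to $u$.

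There is essentially no obstacle; the only point to be careful about is that the convolution is only defined on a slightly smaller ball $B_R \subsetneq B_1$, which is exactly the hedge contained in the phrase ``possible on a slightly smaller ball'' in the statement. If one wished to sharpen the conclusion to the full ball $B_1$, one would need to combine the convolution on a shrinking family of concentric sub-balls with a patching or diagonal argument, but for the applications in the sequel (namely approximation arguments on balls of radius $R < 1$) the present formulation is already sufficient.
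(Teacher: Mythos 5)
Your argument is exactly the one the paper uses: the convolution regularization $u_{\ep}=u\ast\rho_{\ep}$ with a radial kernel gives a smooth plurisubharmonic sequence decreasing to $u$ on slightly smaller balls, and the displayed change of variables $w'=e^{-i\theta}w$ preceding the corollary is precisely the paper's verification of $S^1$-invariance. No essential difference; the proposal is correct.
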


In order to illustrate the use of Proposition (\ref{prop-rm-001}), 
let us consider a simpler case. 
Suppose $u$ is a subharmonic function on the unit disk $\bD\subset \bC$, 
which is also $C^2$-continuous on $\bD^*$. 
Then equation (\ref{pre-001}) reduces to 

\begin{equation}
\label{pre-002}
\int_{ |z|< R } dd^c u = \int_{|z| =R} d^c u,
\end{equation}
for all $R\in (0,1)$. 

Utilizing the polar coordinate $z = re^{i\theta}$ on $\bC^*$, 
we have the computation
 
\begin{equation}
\label{rm-008}
d^c u = -\Im (\dbar u) = \frac{1}{2} \left\{  (r \d_r u ) d\theta - ( r^{-1} \d_{\theta} u) dr \right\}.
\end{equation}
Denote the circular average of $u$ by
$$ \hat{u}(r) : = \frac{1}{2\pi} \int_0^{2\pi} u(r e^{i\theta}) d\theta, $$
Then it follows 
$$ \frac{1}{\pi} \int_{|z|< R} dd^c u =  r \d_r \hat u |_{r =R}\rightarrow \nu_u(0), $$
as $R\rightarrow 0$. 
Therefore, we prove that the residual mass of $u$ at the origin 
is zero if its Lelong number is in $\bC$.  

\section{The Hopf-coordinates }
In this section, we are going to compute the integral on the 
R.H.S. of equation (\ref{pre-001}). 
It boils down to calculate the following $3$-form on the $3$-sphere 
\begin{equation}
\label{hp-001}
 d^c u \wedge dd^c u|_{S_R}, 
 \end{equation}
for $u\in \cF^{\infty}(B_1)$, and $R\in(0,1)$. 
First, we note that the $S^1$-action 
also performs on this $3$-sphere, 
and it induces the Hopf fiberation. 
The Hopf fiberation $S^1\hookrightarrow S^3\xrightarrow{p} S^2$
is an example of non-trivial $S^1$-fiber bundle over $S^2$. 
It can be illustrated via the following real coordinates. 

Write the unit $3$-sphere as 
$$ S^3: = \{ x^2_1 + y_1^2 + x_2^2+ y_2^2 =1 \},$$
where $(x_1, y_1, x_2, y_2)\in\bR^4$ is the Euclidean coordinate.  
Let  $\theta\in [0, \pi], \vp\in [0, 2\pi]$ be the spherical coordinate of the unit $2$-sphere $S^2\subset \bR^3$. 
Define the following coordinate for $\eta\in [0, 4\pi]$  
$$
x_1 = \cos\left( \frac{\eta + \vp}{2} \right) \sin \left( \frac{\theta}{2} \right),
\ \ \ \ 
y_1 = \sin \left( \frac{\eta + \vp}{2} \right) \sin \left( \frac{\theta}{2} \right),
$$
$$
x_2 = \cos\left( \frac{\eta - \vp}{2} \right) \cos \left( \frac{\theta}{2} \right),
 \ \ \ \ 
 y_2 = \sin \left( \frac{\eta - \vp}{2} \right) \cos \left( \frac{\theta}{2} \right). 
$$
It is clear that $\eta$ is the direction under the $S^1$-action, 
and the Hopf fiberation $p: S^3 \rightarrow S^2$ is the submersion
\begin{equation}
\label{hp-0010}
\left(  2 (x_1x_2 + y_1y_2), \  2(  x_2y_1 - x_1y_2), \  x_2^2 + y_2^2 - x_1^2-y_1^2 \right). 
\end{equation}

This leads us to  introduce the following \emph{real Hopf-coordinate} 
$$ (r, \eta, \theta, \vp)$$ 
for all $r\in \bR_+$, $\theta\in [0,\pi]$ and $\eta,  \vp \in \bR$ 
to represent a point in  $\bR^4 - \{ 0 \}$. 
Then the change of variables is 
$$
x_1 = r\cos\left( \frac{\eta + \vp}{2} \right) \sin \left( \frac{\theta}{2} \right),
\ \ \ \ 
y_1 = r\sin \left( \frac{\eta + \vp}{2} \right) \sin \left( \frac{\theta}{2} \right),
$$
$$
x_2 = r\cos\left( \frac{\eta - \vp}{2} \right) \cos \left( \frac{\theta}{2} \right),
 \ \ \ \ 
 y_2 = r\sin \left( \frac{\eta - \vp}{2} \right) \cos \left( \frac{\theta}{2} \right). 
$$
It follows that a $3$-sphere $S_R$ as the boundary of the ball $B_R$
can be written as
$$ S_R: =  \{ x^2_1 + y_1^2 + x_2^2+ y_2^2 = R^2 \} \subset \bR^4, $$
for a fixed $R>0$.
Moreover, if the angle $\vp$ varies in $[0, 2\pi)$ and $\eta$ in $[0, 4\pi)$,
then this coordinate runs over all the points on the $3$-sphere $S_R$ exactly once. 






\subsection{Complex version}
There is another way to view the Hopf fiberation 
through the complex coordinates. 
Let $z:= (z_1, z_2)$ be the complex Euclidean coordinate on $\bC^2$, 
and then the unit $3$-sphere $S^3$ can be characterized as 
$$ S^3: = \{ |z_1|^2 + |z_2|^2 =1 \}. $$
Topologically, the $2$-sphere $S^2$
can be identified with the extended complex plane $\bC_{\infty}: =\bC \bigcup \{ \infty \}$
via the stereographic projection. 
Moreover, the complex projective line $\bC\bP^1$ can also be identified with $\bC_{\infty}$
via the continuous map $f: S^3 \rightarrow S^2$ as 
$$f: (z_1, z_2) \rightarrow \frac{z_1}{z_2}\in\bC_{\infty}.$$
Then we can define the fiber map $p$ in the fiber bundle 
$S^1\hookrightarrow S^3\xrightarrow{p} S^2$ as 
$$ p: (z_1, z_2) \rightarrow [z_1: z_2] \in \bC\bP^1, $$
and the pre-image of each point in $\bC\bP^1$ is a great circle in $S^3$. 

In order to illustrate the idea, 
we first introduce the following easier version. 
It is well known that $\bC\bP^1$ can  be covered by 
two holomorphic coordinate charts, consisting of 
$$U_1: = \bC\bP^1 - [1:0], \ \ \ U_2: =  \bC\bP^1- [0: 1], $$
where $U_1$ is identified with $\bC_{\infty} - \{\infty \}$
and $U_2$ with $\bC_{\infty} - \{0\}$. 
Write their corresponding holomorphic coordinates as $\z: = z_1/ z_2$ and $\xi = \z^{-1}$.
Then we give the following two homeomorphisms as 
the local trivializations of the fiber map $p$: 
define 
$\psi_1: \bC\times S^1 \rightarrow p^{-1} (\bC)$ and 
$\psi_2: (\bC_{\infty} - \{0 \} ) \times S^1  \rightarrow p^{-1} (\bC_{\infty} - \{0 \} )$ 
as 
\begin{equation}
\label{hp-0011}
\psi_1(\z, \eta' ) = \left(  \frac{\z e^{i\eta'}}{(1+ |\z|^2 )^{1/2}},\ \frac{e^{i\eta'}}{(1+ |\z|^2)^{1/2}}     \right),
\end{equation}
and 
\begin{equation}
\label{hp-0012}
\psi_2(\xi, \eta') = \left(  \frac{ e^{i\eta'}}{(1+ |\xi|^{2} )^{1/2}},\  \frac{ \xi e^{i\eta'}}{(1+ |\xi|^2)^{1/2}}     \right),
\end{equation}
for all $\z, \xi \in \bC$ and $\eta' \in \bR$. 
It is apparent that we have 
$p\circ \psi_1 (\z, \eta') = [\z: 1]$ on $U_1$ and $p \circ \psi_2 (\xi, \eta' ) = [1: \xi] $ on $U_2$.
Then these two local trivializations describe the Hopf fiberation in one way.

\subsection{Another version}
Next, 
we can write the complex variables of $\bC^2$ in terms of the real Hopf-coordinate as 
\begin{equation}
\label{hp-0013}
z_1 =  (r\sin (\theta /2))e^{\frac{i }{2}(\eta + \vp) }, \ \ \ z_2 = (r\cos (\theta /2))e^{\frac{i}{2} (\eta - \vp)}, 
\end{equation}
for all $r>0$, $\theta\in[0,\pi]$, $\vp\in[0,2\pi]$ and $\eta\in [0,4\pi]$. 
Moreover, the complex variable on $\bC_{\infty}$  is 
\begin{equation}
\label{hp-00135}
 \z: = \frac{z_1}{z_2} = \tan(\theta/2) e^{i\vp}.  
\end{equation}
and we obtain the following change of variables
\begin{equation}
\label{hp-0014}
z_1 = r e^{\frac{i}{2}\eta} \frac{(\z\cdot |\z| )^{1/2}}{(1+ |\z|^2)^{1/2}},\ \ \ 
z_2 =  r e^{\frac{i}{2}\eta}  \frac{(\bar\z / |\z|)^{1/2}}{(1+ |\z|^2)^{1/2}}. 
\end{equation}
In fact, they can be viewed as different local trivializations of the Hopf-fiberation with the fiber map $p$. 
Denote  $\l_+, \l_-$ by the following half circles on $S^2\cong \bC\bP^1$ 
$$\l_+: =  \left\{ [t : 1] \in \bC\bP^1; \ \  t\in [0, +\infty]  \right\}; $$
$$\l_-: =  \left\{ [t : 1] \in \bC\bP^1; \ \  t\in [ -\infty, 0 ]  \right\}, $$
and define two holomorphic coordinate charts as 
$$V_1: = S^2- \l_+; \ \ \ V_2: = S^2 - \l_-. $$
Similarly, we have another two half circles as 
$$\jmath_{+}: =  \left\{ [ e^{i\theta'}: 1] \in \bC\bP^1; \ \  \theta' \in [ 0, \pi ]  \right\};$$
$$\jmath_{-}: =  \left\{ [ e^{i\theta'}: 1] \in \bC\bP^1; \ \  \theta' \in [ \pi, 2\pi ]  \right\},$$
and another two charts are defined as 
$$V_3: = S^2- \jmath_+; \ \ \ V_4: = S^2 - \jmath_-. $$
It is apparent that each of the charts $V_i, i=1,2,3,4$ 
can be identified to the slit plane $\bC_+$ (or $\bC_-$)
via the stereographic projections, 
and they together
cover the whole sphere $S^2$. 
Then we can introduce homeomorphisms $\psi'_i$ between $V_i\times S^1$ and $p^{-1}(V_i)$ 
for all $i=1,2,3,4$, 
and they will build the local trivializations for the fiber map $p$ in a different way. 

In particular, equation (\ref{hp-0014}) gives the map $\psi'_1: V_1\times S^1 \rightarrow p^{-1}(V_1)$ as 
\begin{equation}
\label{hp-0015}
\psi'_1(\z, \eta) : =  \left( e^{\frac{i}{2}\eta} \frac{(\z\cdot |\z| )^{1/2}}{(1+ |\z|^2)^{1/2}},\ \ \ 
  e^{\frac{i}{2}\eta}  \frac{( |\z| / \z )^{1/2}}{(1+ |\z|^2)^{1/2}} \right). 
\end{equation}
Therefore,
we introduce the following coordinate 
$$ (r, \eta, \z, \bar\z)$$
for all $r>0$, $\eta\in \bR$ and $\z\in\bC_{\infty}$ to represent a point in $ \bC^2 -\{ 0\}$,
and refer it as the \emph{complex Hopf-coordinate}.

We note that the coordinate $\z$ (under the trivialization $\psi'_1$) is no longer continuous across the line $\l_+$,
since its fractional power $\z^{1/2}$ is multi-valued. 
However, we do not worry about this problem if the function $u$ is $S^1$-invariant,
and the reason is as follows.

In fact, the trivialization $\psi'_2$ on $V_2\times S^1$ 
will take another analytic branch of the two-valued holomorphic function $\z^{1/2}$,
and then we can write 
\begin{equation}
\label{hp-0016}
\psi'_2(\z, \eta) : =  \left( e^{\frac{i}{2}\eta} \frac{ e^{i\pi}(\z\cdot |\z| )^{1/2}}{(1+ |\z|^2)^{1/2}},\ \ \ 
  e^{\frac{i}{2}\eta }  \frac{e^{-i\pi}( |\z| / \z )^{1/2}}{(1+ |\z|^2)^{1/2}} \right). 
\end{equation}
Hence it follows 
$ u\circ \psi_1 = u\circ \psi_2$ for all $(\z, \eta)$ in the overlapping area. 
In other words, the function $u$ is periodic in the angle $\vp$ direction with period $2\pi$(instead of $4\pi$!).

\begin{rem}
\label{rem-hp-001}
In fact, there is no difference between the two trivializations $\psi_1$ and $\psi'_1$
for any $S^1$-invariant function, since we can rewrite $\psi'_1$ as follows from equation (\ref{hp-00135})
\begin{equation}
\label{hp-0017}
\psi'_1(\z, \eta) : =  \left( e^{-\frac{i}{2} \vp} \frac{\z e^{\frac{i}{2} \eta}}{(1+ |\z|^2)^{1/2}},\ \ \ 
  e^{-\frac{i}{2}\vp }  \frac{ e^{\frac{i}{2}\eta}}{(1+ |\z|^2)^{1/2}} \right),
\end{equation}
and then it follows $u\circ \psi_1 = u\circ \psi'_1$ for all $(\z, \eta)$ in the overlapping area.

\end{rem}

\section{The decomposition formula}

Now we are going to perform local computations near a point $b\in S_R$,
under the complex Hopf-coordinate. 
It is noted that we will directly calculate on $\bar{z}^2_1$ and $\bar{z}^2_2$ in the following, 
and the fractional power $\z^{1/2}$ will not be used essentially.

\subsection{The $1$-form}
Recall that we have 
\begin{equation}
\label{com-000}
z_1 = r e^{\frac{i}{2}\eta} \frac{(\z\cdot |\z| )^{1/2}}{(1+ |\z|^2)^{1/2}},\ \ \ 
z_2 =  r e^{\frac{i}{2}\eta}  \frac{(\bar\z / |\z|)^{1/2}}{(1+ |\z|^2)^{1/2}},
\end{equation}
and then the following relations exist:
\begin{equation}
\label{com-001}
z_1 \cdot \bar z_2 = r^2 \frac{\z}{1+ |\z|^2},\ \ \ z_1\cdot z_2 = r^2 \frac{e^{i\eta}|\z|}{1 + |\z|^2}, 
\end{equation}
and 
\begin{equation}
\label{com-0011}
|z_1|^2 + |z_2|^2 = r^2,
\end{equation}
together with 
\begin{equation}
\label{com-002}
|z_2|^2 - |z_1|^2 = r^2 \frac{1- |\z|^2}{1+ |\z|^2} = r^2 \cos\theta.
\end{equation}
The first goal is to calculate the following $1$-form 
\begin{eqnarray}
\label{com-003}
d^c u &=& \frac{i}{2} (\dbar u - \d u )
=  - \Im (\dbar u)
\nonumber\\
&= & -\Im \left( \frac {\d u}{\d \bar z_1} d\bar z_1 \right)  -\Im \left( \frac {\d u}{\d \bar z_2} d\bar z_2 \right).
\end{eqnarray}
The first term on the R.H.S. of the above equation can be computed as
\begin{equation}
\label{com-004}
\bar z_1 =  r e^{- \frac{i}{2}\eta} \frac{(\bar\z\cdot |\z| )^{1/2}}{(1+ |\z|^2)^{1/2}}, \ \ \ 
\bar z^2_1 =  r^2 e^{- i\eta} \frac{\bar\z\cdot |\z| }{(1+ |\z|^2)},
\end{equation}
and it follows 
\begin{eqnarray}
\label{com-005}
2 \bar z_1 d\bar z_1 &=&  2 r e^{-i \eta} \frac{\bar\z\cdot |\z| }{(1+ |\z|^2)} dr  - i r^2 e^{-i \eta} \frac{\bar\z\cdot |\z| }{(1+ |\z|^2)} d\eta
\nonumber\\
&+&  r^2 e^{-i\eta} \left\{ \d_\z \left( \frac{\bar\z\cdot |\z| }{1+ |\z|^2} \right) d\z +  \d_{\bar\z} \left( \frac{\bar\z\cdot |\z| }{1+ |\z|^2} \right) d\bar\z\right\},
\end{eqnarray} 
and then we have 
\begin{equation}
\label{com-006}
\d_\z \left(   \frac{\bar\z\cdot |\z| }{1+ |\z|^2}  \right) = \frac{\bar\z^2}{2|\z|} \cdot \frac{1- |\z|^2}{(1+ |\z|^2)^2 },
\end{equation}
and 
\begin{equation}
\label{com-007}
\d_{\bar\z} \left(   \frac{\bar\z\cdot |\z| }{1+ |\z|^2}  \right) = \frac{|\z|}{2} \cdot \frac{3+ |\z|^2}{(1+ |\z|^2)^2 }.
\end{equation}
Combing equations (\ref{com-006}), (\ref{com-007}) with (\ref{com-005}), we obtain
\begin{eqnarray}
\label{com-008}
2 \bar z_1 d\bar z_1 &=&  2 r e^{-i \eta} \frac{\bar\z\cdot |\z| }{(1+ |\z|^2)} dr  - i r^2 e^{-i \eta} \frac{\bar\z\cdot |\z| }{(1+ |\z|^2)} d\eta
\nonumber\\
&+&  r^2 e^{-i\eta} \frac{\bar\z^2}{2|\z|} \cdot \frac{1- |\z|^2}{(1+ |\z|^2)^2 } d\z 
\nonumber\\
&+&   r^2 e^{-i\eta}  \frac{|\z|}{2} \cdot \frac{3+ |\z|^2}{(1+ |\z|^2)^2 }   d\bar\z,
\end{eqnarray} 
and it follows 
\begin{eqnarray}
\label{com-009}
4  \ d\bar z_1 &=&  4 e^{-\frac{i}{2} \eta} \frac{ (\bar\z\cdot |\z|)^{1/2} }{(1+ |\z|^2)^{1/2}} dr  - 2 i r e^{-\frac{i}{2} \eta} \frac{(\bar\z\cdot |\z| )^{1/2} }{(1+ |\z|^2)^{1/2}} d\eta
\nonumber\\
&+&  r e^{-\frac{i}{2}\eta} \left( \frac{\bar\z}{|\z|} \right)^{\frac{3}{2}} \cdot \frac{1- |\z|^2}{(1+ |\z|^2)^{3/2} } d\z 
\nonumber\\
&+&   r e^{-\frac{i}{2}\eta}  \left( \frac{\z}{|\z|} \right)^{\frac{1}{2}} \cdot \frac{3+ |\z|^2}{(1+ |\z|^2)^{3/2} }   d\bar\z. 
\end{eqnarray} 
In fact, we can divide $\bar{z}^2_1$ in equation (\ref{com-008}), and obtain 
\begin{eqnarray}
\label{com-010}
4  \ d\bar z_1 =   \bar z_1  \left\{ 4 r^{-1}dr  - 2 i  d\eta +  \frac{ 1 }{ \z }  \cdot \frac{1- |\z|^2}{(1+ |\z|^2) } d\z  +  \frac{1}{\bar\z} \cdot \frac{3+ |\z|^2}{(1+ |\z|^2) }   d\bar\z  \right\}.
\nonumber\\
\end{eqnarray} 
Similarly, we have 
\begin{equation}
\label{com-011}
\bar z_2 =  r e^{- \frac{i}{2}\eta} \frac{( \z / |\z| )^{1/2}}{(1+ |\z|^2)^{1/2}}, \ \ \ 
\bar z^2_2 =  r^2 e^{- i\eta} \frac{\z / |\z| }{(1+ |\z|^2)},
\end{equation}
and then it gives 
\begin{eqnarray}
\label{com-012}
2 \bar z_2 d\bar z_2 &=&  2 r e^{-i \eta} \frac{ \z / |\z| }{(1+ |\z|^2)} dr  - i r^2 e^{-i \eta} \frac{ \z / |\z| }{(1+ |\z|^2)} d\eta
\nonumber\\
&+&  r^2 e^{-i\eta} \left\{ \d_\z \left( \frac{ \z / |\z| }{1+ |\z|^2} \right) d\z +  \d_{\bar\z} \left( \frac{ \z / |\z| }{1+ |\z|^2} \right) d\bar\z\right\}.
\end{eqnarray} 
Moreover, we have 
\begin{equation}
\label{com-013}
\d_\z \left(   \frac{ \z / |\z| }{1+ |\z|^2}  \right) = \frac{1}{2|\z|} \cdot \frac{1- |\z|^2}{(1+ |\z|^2)^2 },
\end{equation}
and 
\begin{equation}
\label{com-014}
\d_{\bar\z} \left(   \frac{\z / |\z| }{1+ |\z|^2}  \right) = - \frac{\z^2}{2 |\z|^3} \cdot \frac{1 + 3|\z|^2}{(1+ |\z|^2)^2 }.
\end{equation}
It follows 
\begin{eqnarray}
\label{com-015}
2 \bar z_2 d\bar z_2 &=&  2 r e^{-i \eta} \frac{\z / |\z| }{(1+ |\z|^2)} dr  - i r^2 e^{-i \eta} \frac{\z /  |\z| }{(1+ |\z|^2)} d\eta
\nonumber\\
&+&  r^2 e^{-i\eta} \frac{1}{2|\z|} \cdot \frac{1- |\z|^2}{(1+ |\z|^2)^2 } d\z 
\nonumber\\
&- &   r^2 e^{-i\eta}  \frac{\z^2}{2|\z|^3} \cdot \frac{1+ 3|\z|^2}{(1+ |\z|^2)^2 }   d\bar\z,
\end{eqnarray} 
and we further simplify as
\begin{eqnarray}
\label{com-016}
4  \ d\bar z_2 =   \bar z_2  \left\{ 4 r^{-1}dr  - 2 i  d\eta +  \frac{ 1 }{ \z }  \cdot \frac{1- |\z|^2}{(1+ |\z|^2) } d\z  -  \frac{1}{\bar\z} \cdot \frac{ 1+ 3|\z|^2}{(1+ |\z|^2) }   d\bar\z  \right\}.
\nonumber\\
\end{eqnarray} 

\bigskip 

Next we are going to use the chain rule as follows 
$$ \frac{\d u}{ \d \bar z_1} =  \frac{\d u}{ \d r} \frac{\d r}{ \d \bar z_1} +   \frac{\d u}{ \d \z} \frac{\d \z}{ \d \bar z_1} + \frac{\d u}{ \d \bar\z} \frac{\d\bar\z}{ \d \bar z_1}, $$
and 
$$ \frac{\d u}{ \d \bar z_2} =  \frac{\d u}{ \d r} \frac{\d r}{ \d \bar z_2} +   \frac{\d u}{ \d \z} \frac{\d \z}{ \d \bar z_2} + \frac{\d u}{ \d \bar\z} \frac{\d\bar\z}{ \d \bar z_2}.  $$
Since $\z$ is a holomorphic function of $z$, 
it is clear that $\d \z / \d\bar z_1 = 0$, and $\d \z / \d\bar z_2 = 0$.
Moreover, it follows from equation (\ref{com-0011}) that $\d r / \d \bar z_1 = z_1 / 2r$ and $\d r / \d \bar z_2 = z_2 / 2r$.
Hence we have 
\begin{eqnarray}
\label{com-017}
&&4 \Im\left(  \frac{\d u}{ \d r} \frac{\d r}{ \d \bar z_1} d\bar z_1 \right)
\nonumber\\
&=& (r^{-1} \d_r u) |z_1|^2 \left\{  - d\eta +    \frac{1- |\z|^2}{ 2 (1+ |\z|^2) } \Im\left( \frac{ d\z }{ \z }  \right) +  
 \frac{3+ |\z|^2}{ 2 (1+ |\z|^2) }  \Im\left( \frac{d\bar\z}{\bar\z} \right) \right\}
\nonumber\\
&=&  -  (r^{-1} \d_r u) |z_1|^2 \left\{    d\eta +  \Im\left( \frac{ d\z }{ \z }    \right)  \right\},
\nonumber\\
\end{eqnarray}
where we used the equality $\Im(\z^{-1} d\z) = - \Im(\bar\z^{-1}d\bar\z)$. 
Similarly, it follows 
\begin{eqnarray}
\label{com-018}
&&4 \Im\left(  \frac{\d u}{ \d r} \frac{\d r}{ \d \bar z_2} d\bar z_2 \right)
\nonumber\\
&=& (r^{-1} \d_r u) |z_2|^2 \left\{  - d\eta +    \frac{1- |\z|^2}{ 2 (1+ |\z|^2) } \Im\left( \frac{ d\z }{ \z }  \right) -
 \frac{ 1 +3 |\z|^2}{ 2 (1+ |\z|^2) }  \Im\left( \frac{d\bar\z}{\bar\z} \right) \right\}
\nonumber\\
&=&  -  (r^{-1} \d_r u) |z_2|^2 \left\{    d\eta -  \Im\left( \frac{ d\z }{ \z }    \right)  \right\}.
\nonumber\\
\end{eqnarray}
Combing with equation (\ref{com-017}) and (\ref{com-018}), we further have 
\begin{eqnarray}
\label{com-019}
&& - 4 \Im\left(  \frac{\d u}{ \d r} \frac{\d r}{ \d \bar z_1} d\bar z_1 \right) - 4 \Im\left(  \frac{\d u}{ \d r} \frac{\d r}{ \d \bar z_2} d\bar z_2 \right)
\nonumber\\
&=& ( r\d_r u ) \left\{    d\eta    - \frac{ 1 - |\z|^2}{ 1 + |\z|^2 } \Im\left( \frac{ d\z }{ \z }  \right)    \right\}, 
\end{eqnarray}
and note that the R.H.S. of equation (\ref{com-019}) equals to 
\begin{equation}
\label{com-020}
 ( r\d_r u )  (  d\eta  - \cos\theta d\vp ) 
 \end{equation}
in the real Hopf-coordiante. 

Furthermore, it is straightforward to have 
$$ \frac{\d\bar\z}{\d \bar z_1} = \frac{1}{\bar z_2}, \ \ \  \frac{\d \bar\z}{\d \bar z_2} = - \frac{\bar z_1}{ ( \bar z_2)^2}. $$
Then we obtain 
\begin{eqnarray}
\label{com-021}
&& 4 \frac{\d u}{ \d \bar\z} \frac{\d\bar\z}{ \d \bar z_1} d\bar z_1 
\nonumber\\
&=& (\bar\z \d_{\bar\z}u ) \left\{  4 r^{-1} dr - 2i d\eta +  \frac{1- |\z|^2}{(1+ |\z|^2) } \left( \frac{ d\z }{ \z } \right)   +  \frac{3+ |\z|^2}{(1+ |\z|^2) }  \left(\frac{d\bar\z}{\bar\z} \right)    \right\},
\nonumber\\
\end{eqnarray}
and 
\begin{eqnarray}
\label{com-022}
&& 4 \frac{\d u}{ \d \bar\z} \frac{\d\bar\z}{ \d \bar z_2} d\bar z_2
\nonumber\\
&=& -(\bar\z \d_{\bar\z}u ) \left\{  4 r^{-1} dr - 2i d\eta +  \frac{1- |\z|^2}{(1+ |\z|^2) } \left( \frac{ d\z }{ \z } \right)   -  \frac{ 1+ 3 |\z|^2}{(1+ |\z|^2) }  \left(\frac{d\bar\z}{\bar\z} \right)    \right\}. 
\nonumber\\
\end{eqnarray}
Moreover, we note the first three terms on the R.H.S. of equation (\ref{com-021}) and (\ref{com-022}) 
are only differ by a minus sign. Hence it follows 

\begin{eqnarray}
\label{com-023}
&& - \Im \left( \frac{\d u}{ \d \bar\z} \frac{\d\bar\z}{ \d \bar z_1} d\bar z_1 +  \frac{\d u}{ \d \bar\z} \frac{\d\bar\z}{ \d \bar z_2} d\bar z_2 \right)
\nonumber\\
&=&  - \Im \left(\d_{\bar\z}u \cdot d\bar\z   \right) =  \Im \left(\d_{\z}u \cdot d \z   \right) . 
\end{eqnarray}
In conclusion, we obtain the following formula. 
\begin{lemma}
\label{lem-com-001}
For any $u\in \cF^{\infty}(B_1)$, we have 
\begin{eqnarray}
\label{com-024}
4\ d^c u &=& (r\d_r u) \left\{ d\eta -  \cos\theta \cdot \Im\left(  \frac{d\z}{\z} \right) \right\}  + 4 \Im (\d_{\z} u \cdot d\z )
\nonumber\\
&=& (r u_r)d\eta +( 2\sin\theta \cdot  u_{\theta}  - \cos\theta \cdot ru_r ) d\vp  - \frac{2 u_{\vp}}{\sin\theta} d\theta. 
\end{eqnarray}
where $\cos\theta = (1- |\z|^2)(1+ |\z|^2)^{-1}$. 
\end{lemma}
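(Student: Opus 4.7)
The plan is to compute $d^c u = -\Im(\dbar u)$ in the complex Hopf-coordinate $(r,\eta,\z,\bar\z)$ by expanding $u_{\bar z_j}$ via the chain rule. Three observations eliminate most terms immediately: $u$ is $S^1$-invariant, so $\d_\eta u = 0$; the map $\z = z_1/z_2$ is holomorphic in $z$, so $\z_{\bar z_j} = 0$; and $r^2 = |z_1|^2 + |z_2|^2$ gives $r_{\bar z_j} = z_j/(2r)$. Only the $r$-derivative and $\bar\z$-derivative contributions therefore survive, so I would treat these two pieces separately and then convert the answer into real Hopf-coordinates.

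For the $r$-derivative part, I would substitute the expressions for $2 \bar z_j d\bar z_j$ already computed in equations (\ref{com-008}) and (\ref{com-015}), take imaginary parts to obtain (\ref{com-017}) and (\ref{com-018}), and then sum. Using $|z_1|^2 + |z_2|^2 = r^2$ together with the identity $\Im(\z^{-1}d\z) = -\Im(\bar\z^{-1}d\bar\z)$, the $d\bar\z/\bar\z$ contributions collapse and what remains is exactly $(r\d_r u)\{ d\eta - \cos\theta \cdot \Im(d\z/\z) \}$ with $\cos\theta = (1-|\z|^2)(1+|\z|^2)^{-1}$, as in equation (\ref{com-019}).

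For the $\bar\z$-derivative part, I would use $\bar\z_{\bar z_1} = 1/\bar z_2$ and $\bar\z_{\bar z_2} = -\bar z_1/\bar z_2^2$ combined with the differentials (\ref{com-009}) and (\ref{com-016}). The key cancellation is that in (\ref{com-021}) and (\ref{com-022}) the $dr$, $d\eta$, and $d\z$ components differ only by an overall minus sign, so they vanish on summation; the $d\bar\z$ coefficients combine to yield simply $4 u_{\bar\z} d\bar\z$, and hence $-\Im$ of it equals $4\Im(\d_\z u \cdot d\z)$. This completes the first identity for $4 d^c u$.

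The final step is the coordinate change from $(\z,\bar\z)$ to $(\theta,\vp)$ via $\z = \tan(\theta/2) e^{i\vp}$. Since $\log\z = \log\tan(\theta/2) + i\vp$, the identity $\Im(d\z/\z) = d\vp$ is immediate, giving $(r u_r)(d\eta - \cos\theta\, d\vp)$ for the first summand. For the second summand, the chain-rule relations $u_\theta = \sec^2(\theta/2)\Re(u_\z e^{i\vp})$ and $u_\vp = -2\tan(\theta/2)\Im(u_\z e^{i\vp})$ invert into expressions for $\Re(u_\z e^{i\vp})$ and $\Im(u_\z e^{i\vp})$ in terms of $u_\theta, u_\vp$; substituting into $d\z = e^{i\vp}\{(1/2)\sec^2(\theta/2)\,d\theta + i\tan(\theta/2)\,d\vp\}$ and taking imaginary parts, together with the elementary simplifications $\tan(\theta/2)\cos^2(\theta/2) = (1/2)\sin\theta$ and $\sec^2(\theta/2)/\tan(\theta/2) = 2/\sin\theta$, produces the claimed real-coordinate expression. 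The main obstacle is really just bookkeeping: verifying that the sign cancellations in the $\bar\z$-part happen exactly as advertised, and that the trigonometric conversions land on the coefficients $2\sin\theta$ and $-2/\sin\theta$ stated in the lemma.
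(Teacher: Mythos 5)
Your proposal is correct and follows essentially the same route as the paper: the $r$-part comes from summing (\ref{com-017}) and (\ref{com-018}) using $|z_1|^2+|z_2|^2=r^2$ and $\Im(\z^{-1}d\z)=-\Im(\bar\z^{-1}d\bar\z)$, the $\bar\z$-part comes from the sign cancellation in (\ref{com-021})--(\ref{com-022}), and the conversion to real Hopf-coordinates is the same chain-rule computation the paper leaves as a verification. The only addition is that you make explicit the role of $S^1$-invariance (dropping the $\d_\eta u$ term) and spell out the trigonometric bookkeeping, which the paper states without proof.
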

\begin{proof}
Combine equation (\ref{com-019}) and (\ref{com-023}), and then the equality follows in the complex Hopf-coordinate. 
By utilizing the real Hopf-coordinate, we obtain the second line on the R.H.S. of equation (\ref{com-024}), 
and one can check the following change of variables
$$2 \Im (\d_{\z} u \cdot d\z ) =  \sin\theta (\d_{\theta} u) d\vp -  \frac{ 1 }{\sin\theta} ( \d_{\vp}u)d\theta, $$
Then our result follows. 
\end{proof}

\subsection{The $3$-form} 
In this section, we continue our computation on the complex hessian $dd^c u$
for a $u\in \cF^{\infty}(B_1)$ in the complex Hopf-coordinate. Again, we perform the calculation near a point on $S_R$ for an $R\in (0,1)$. 
Moreover, we only need to know the formula for its restriction as 
$$ dd^c u |_{S_R} = d(d^c u)|_{S_R}.  $$
For the first term in equation (\ref{com-024}) we have 
\begin{equation}
\label{cal-001}
d\left\{  ( r u_r  ) d\eta  \right\}|_{S_R} = (r u_r)_{,\z}   d\z\wedge d\eta +  (r u_r)_{,\bar\z}   d\bar\z\wedge d\eta,
\end{equation}
where we used the notation
$$ (r u_r)_{,\z}  = \frac{ \d (r\d_r u)}{\d \z}, \ \ (r u_r)_{,\bar\z}  = \frac{ \d (r\d_r u)}{\d \bar\z}. $$
For the second term, we can write it as 
\begin{equation}
\label{cal-002}
d\left\{  ( r u_r  ) \cos\theta \frac{1}{2i}  \left(  \frac{d\bar\z}{\bar\z}   - \frac{d\z}{\z} \right)  \right\}|_{S_R}.
\end{equation}
Then we have 
\begin{equation}
\label{cal-003}
\d_{\z}\cos\theta =  \d_{\z} \left( \frac{1- |\z|^2}{ 1+ |\z|^2 } \right) = \frac{-2 \bar\z}{(1+ |\z|^2)^2},
\end{equation}
and the following is clear
\begin{equation}
\label{cal-004}
 d  \left( \frac{d\z}{\z} \right) =0, \ \ \ d  \left( \frac{d\bar\z}{\bar\z} \right) =0. 
\end{equation}
Thus equation (\ref{cal-002}) is equal to 
\begin{eqnarray}
\label{cal-005}
&& \frac{1}{2i} \left\{ (ru_r)_{,\z} \cos\theta \frac{\z}{|\z|^2}  -   (ru_r)  \frac{2}{(1+ |\z|^2)^2 }    \right\} d\z\wedge d\bar\z
\nonumber\\
&+&  \frac{1}{2i} \left\{ (ru_r)_{,\bar\z} \cos\theta \frac{\bar\z}{|\z|^2}  -   (ru_r)  \frac{2}{(1+ |\z|^2)^2 }    \right\} d\z\wedge d\bar\z
\nonumber\\
&=&  (ru_r )\frac{2id\z\wedge d\bar\z}{(1+|\z|^2)^2}  - \Re ( \z \cdot (ru_r)_{,\z}) \frac{\cos\theta}{|\z|^2} i d\z\wedge d\bar\z,
\end{eqnarray}
and then we obtain the following equality. 
\begin{lemma}
\label{lem-cal-001}
For any $u\in \cF^{\infty}(B_1)$, we have 
\begin{eqnarray}
\label{cal-006}
4 \ dd^c u|_{S_R} &=&  (r u_r)_{,\z}   d\z\wedge d\eta +  (r u_r)_{,\bar\z}   d\bar\z\wedge d\eta
\nonumber\\
& + & \left\{ 4 u_{,\z\bar\z} -  \Re ( \z \cdot (ru_r)_{,\z}) \frac{\cos\theta}{|\z|^2} +    \frac{2ru_r}{(1+|\z|^2)^2} \right\} i d\z \wedge d\bar\z. 
\end{eqnarray}
\end{lemma}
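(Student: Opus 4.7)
The plan is simply to compute $d(4\, d^c u)$ using the expression from Lemma \ref{lem-com-001},
\begin{equation*}
4\, d^c u = (ru_r)\, d\eta - (ru_r)\cos\theta\cdot \Im\!\left(\tfrac{d\z}{\z}\right) + 4\, \Im\!\left( \d_\z u \cdot d\z\right),
\end{equation*}
and then restrict to $S_R$, i.e.\ kill all terms containing $dr$. Two simplifications make this manageable: first, the $S^1$-invariance of $u$ gives $\d_\eta u = 0$, so no term carries a factor of $d\eta$ beyond those written explicitly; second, upon restriction every term of the form $(\cdot)\, dr\wedge(\cdots)$ disappears, trimming the computation significantly.

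I would treat the three pieces of $4\, d^c u$ one at a time. The first piece, $d\{(ru_r)d\eta\}$, expands via $d(ru_r) = (ru_r)_{,r}dr + (ru_r)_{,\z}d\z + (ru_r)_{,\bar\z}d\bar\z$; after dropping $dr$ this is exactly the first line of \eqref{cal-006}. The second piece, $-d\{(ru_r)\cos\theta\cdot\tfrac{1}{2i}(\tfrac{d\bar\z}{\bar\z}-\tfrac{d\z}{\z})\}$, is the calculation already indicated in \eqref{cal-002}–\eqref{cal-005}: using $d(d\z/\z)=0$ and $\d_\z\cos\theta = -2\bar\z/(1+|\z|^2)^2$ together with its complex conjugate, combined with the reality identity $\overline{\z\cdot(ru_r)_{,\z}} = \bar\z\cdot(ru_r)_{,\bar\z}$, it produces
\begin{equation*}
\left\{ \frac{2ru_r}{(1+|\z|^2)^2} - \Re\bigl(\z\cdot(ru_r)_{,\z}\bigr)\frac{\cos\theta}{|\z|^2} \right\} i\, d\z\wedge d\bar\z.
\end{equation*}
For the third piece, write $4\Im(\d_\z u\, d\z) = \tfrac{2}{i}(\d_\z u\, d\z - \d_{\bar\z}u\, d\bar\z)$; applying $d$ and using $\d_\eta u = 0$ one gets $d(\d_\z u)\wedge d\z = u_{,\z\bar\z}\, d\bar\z\wedge d\z$ plus a $dr$-term, and similarly for the conjugate, so after restriction this contributes exactly $4u_{,\z\bar\z}\cdot i\, d\z\wedge d\bar\z$.

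Summing the three contributions yields \eqref{cal-006}. The only real bookkeeping hurdle will be the second piece: organising the cross terms from $d\{(ru_r)\cos\theta\}\wedge\Im(d\z/\z)$ so that the two $(ru_r)_{,\z}$ and $(ru_r)_{,\bar\z}$ contributions merge via the real-part identity into $\Re(\z(ru_r)_{,\z})\cos\theta/|\z|^2$; since this rearrangement is already carried out in the excerpt, nothing beyond careful sign-tracking remains.
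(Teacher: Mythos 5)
Your proposal is correct and follows essentially the same route as the paper: equations (\ref{cal-001})--(\ref{cal-005}) already carry out the computation for the first two pieces of $4\,d^cu$, and the paper's proof of the lemma supplies only the third piece, $\tfrac{1}{2i}\,d(\d_\z u\,d\z - \d_{\bar\z}u\,d\bar\z)|_{S_R}= u_{,\z\bar\z}\,i\,d\z\wedge d\bar\z$, before combining. Your sign/factor bookkeeping for piece three (the $\tfrac{2}{i}$ prefactor producing $4u_{,\z\bar\z}\,i\,d\z\wedge d\bar\z$) checks out, so nothing is missing.
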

\begin{proof}
It is left to compute the third term in equation (\ref{com-024}), 
and we can perform like follows
$$ \frac{1}{2i}d (\d_{\z}u\cdot d\z - \d_{\bar\z}u\cdot d\bar\z )|_{S_R} = \frac{\d^2 u}{\d\z \d\bar\z }\  i d\z\wedge d\bar\z,  $$
and then the equality follows. 
\end{proof}
In terms of the real Hopf-coordinate, we observe that 
$$  i d\z \wedge d\bar\z = \frac{\sin\theta}{2 \cos^4(\theta/2)} d \theta \wedge d\vp, $$
and then we obtain another way to describe the above $2$-form as 
\begin{eqnarray}
\label{cal-007}
4 \ dd^c u|_{S_R} &=&  (r u_{, r\theta}) d\theta\wedge  d\eta + (ru_{,r\vp}) d\vp \wedge d\eta
\nonumber\\
& + & \left\{ (ru_r)\sin\theta - ( ru_{,r\theta} )\cos\theta \right\} d\theta \wedge d\vp
\nonumber\\
&+& 2\left\{ \sin\theta\cdot u_{,\theta\theta}  + \cos\theta \cdot u_{\theta} +  (\sin\theta)^{-1} u_{,\vp\vp}\right\} d\theta \wedge d\vp.
\end{eqnarray}

Next we are going to compute the following $3$-form. 
Combing Lemma (\ref{lem-com-001}) with Lemma (\ref{lem-cal-001}), it equals to 
$$ 16\ d^c u \wedge dd^c u|_{S_R} = \mbox{I} + \mbox{II},  $$ 
where 
\begin{eqnarray}
\label{cal-0075}
\mbox{I}:&=& \left( (r\d_r u) \left\{ d\eta -  \cos\theta \cdot \Im\left(  \frac{d\z}{\z} \right) \right\}  + 4 \Im (\d_{\z} u \cdot d\z ) \right)
\nonumber\\
&\wedge&   2 \Re \left\{ (r u_r)_{,\z}   d\z \wedge d\eta \right\} ,
\nonumber\\
\end{eqnarray}
and 
\begin{eqnarray}
\label{cal-0076}
\mbox{II}:&=& \left( (r\d_r u) \left\{ d\eta -  \cos\theta \cdot \Im\left(  \frac{d\z}{\z} \right) \right\}  + 4 \Im (\d_{\z} u \cdot d\z ) \right)
\nonumber\\
&\wedge &  \left\{ 4 u_{,\z\bar\z} -  \Re ( \z \cdot (ru_r)_{,\z}) \frac{\cos\theta}{|\z|^2} +    \frac{2ru_r}{(1+|\z|^2)^2} \right\} i d\z \wedge d\bar\z. 
\nonumber\\
\end{eqnarray}
Then the first term is 
\begin{eqnarray}
\label{cal-008}
\mbox{I}&=& d^c u \wedge  \left\{  (r u_r)_{,\z}   d\z\wedge d\eta +  (r u_r)_{,\bar\z}   d\bar\z\wedge d\eta \right\}
\nonumber\\
&=& \frac{i}{2}    (ru_r) \frac{\cos\theta}{|\z|^2}  \left\{    \bar\z\cdot (ru_r)_{,\bar\z}  +  \z\cdot (ru_r)_{,\z}   \right\} d\z \wedge d\bar\z\wedge d\eta
\nonumber\\
&-& 2i \left\{  ( \d_{\bar\z}u ) (r u_r)_{,\z}  +  (\d_{\z}u ) (r u_r)_{,\bar\z}   \right\} d\z \wedge d\bar\z\wedge d\eta
\nonumber\\
&=&  (ru_r) \frac{\cos\theta}{|\z|^2}  \Re \left\{   \z\cdot (ru_r)_{,\z}   \right\} i d\z \wedge d\bar\z\wedge d\eta
\nonumber\\
&-& 4 \Re\left\{    ( \d_{\bar\z}u ) (r u_r)_{,\z}  \right\}   i d\z \wedge d\bar\z\wedge d\eta,
\nonumber\\
 \end{eqnarray}
and the second term is 
\begin{eqnarray}
\label{cal-009}
\mbox{II}&=& d^c u \wedge 
\left\{ 4 u_{,\z\bar\z} -  \Re ( \z \cdot (ru_r)_{,\z}) \frac{\cos\theta}{|\z|^2} +    \frac{2ru_r}{(1+|\z|^2)^2} \right\} i d\z \wedge d\bar\z
\nonumber\\
&=& - (ru_r) \frac{\cos\theta}{|\z|^2}  \Re \left\{   \z\cdot (ru_r)_{,\z}   \right\} i d\z \wedge d\bar\z\wedge d\eta
\nonumber\\
&+&  \left\{ 4 (ru_r) u_{,\z\bar\z}  + \frac{2(ru_r)^2}{(1+ |\z|^2)^2}  \right\} i d\z \wedge d\bar\z\wedge d\eta. 
\nonumber\\
\end{eqnarray}
Observe that the first line on the R.H.S. of equation (\ref{cal-008}) 
cancels with the first line on the R.H.S. of equation (\ref{cal-009}), 
and then we eventually obtain the following formula. 
\begin{prop}
\label{prop-cal-001}
For any $u\in \cF^{\infty}(B_1)$, we have the following $3$-form on the $3$-sphere $S_R$,
in terms of the complex Hopf-coordinate as 
\begin{eqnarray}
\label{cal-010}
&&8 \ d^c u \wedge dd^c u|_{S_R}
\nonumber\\
&=& 2 \left(   (ru_r) u_{,\z\bar\z} - \Re\left\{ ( \d_{\bar\z}u ) (r u_r)_{,\z}  \right\} \right) i d\z \wedge d\bar\z\wedge d\eta
\nonumber\\
&+& \frac{(ru_r)^2}{(1+ |\z|^2)^2}   i d\z \wedge d\bar\z\wedge d\eta. 
\end{eqnarray}
\end{prop}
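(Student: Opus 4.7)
The plan is to combine the formulas from Lemma (\ref{lem-com-001}) for $d^c u$ and Lemma (\ref{lem-cal-001}) for $dd^c u|_{S_R}$, and then sort the resulting $3$-form on $S_R$ into pieces according to which summand of $dd^c u|_{S_R}$ participates. After pullback to $S_R$, all terms must be multiples of $id\z\wedge d\bar\z\wedge d\eta$, so I can track the computation by looking only at which wedges produce this particular top form.

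First I would split
$$ 16\, d^c u\wedge dd^c u|_{S_R} = \mbox{I} + \mbox{II}, $$
where $\mbox{I}$ collects the contribution from the two ``$d\ast\wedge d\eta$'' terms in Lemma (\ref{lem-cal-001}) and $\mbox{II}$ collects the contribution from the ``$id\z\wedge d\bar\z$'' coefficient term. In $\mbox{I}$ the $(r u_r)d\eta$ summand of $d^c u$ is killed (because $d\eta\wedge d\z\wedge d\eta = 0$), so only the $\Im(d\z/\z)$ and $\Im(\d_\z u\, d\z)$ parts of $d^c u$ contribute; this yields exactly the two explicit lines in equation (\ref{cal-008}), namely a term $(r u_r)\frac{\cos\theta}{|\z|^2}\Re\{\z\cdot(r u_r)_{,\z}\}$ and a term $-4\Re\{(\d_{\bar\z}u)(r u_r)_{,\z}\}$, each multiplied by $i d\z\wedge d\bar\z\wedge d\eta$. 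In $\mbox{II}$ the situation is reversed: only the $(r u_r)d\eta$ part of $d^c u$ survives the wedge with $i d\z\wedge d\bar\z$, and multiplying by the coefficient $4u_{,\z\bar\z}-\Re(\z(r u_r)_{,\z})\cos\theta/|\z|^2+2ru_r/(1+|\z|^2)^2$ gives equation (\ref{cal-009}).

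The key observation — the step that makes the statement clean — is the cancellation between $\mbox{I}$ and $\mbox{II}$: the $(r u_r)\frac{\cos\theta}{|\z|^2}\Re\{\z\cdot(r u_r)_{,\z}\}$ term in $\mbox{I}$ is exactly the negative of the $(r u_r)\cdot\Re(\z(r u_r)_{,\z})\cos\theta/|\z|^2$ term in $\mbox{II}$. After this cancellation, only $-4\Re\{(\d_{\bar\z}u)(r u_r)_{,\z}\}$ from $\mbox{I}$ and $4(r u_r)u_{,\z\bar\z} + 2(r u_r)^2/(1+|\z|^2)^2$ from $\mbox{II}$ survive; dividing the identity $16\, d^cu\wedge dd^cu|_{S_R}=\mbox{I}+\mbox{II}$ by $2$ produces precisely the stated formula.

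The main obstacle is purely bookkeeping: one must be careful with the anti-symmetry of the wedge product (so that signs when reordering $d\z\wedge d\bar\z\wedge d\eta$ versus $d\eta\wedge d\z\wedge d\bar\z$ match up) and with the fact that $\Im(\z^{-1}d\z)=-\Im(\bar\z^{-1}d\bar\z)$ when combining the two conjugate parts of $d^c u$ into a single real expression. The cancellation itself is immediate once both sides are written out, so no new ingredient is needed beyond the two preceding lemmas — this proposition is really a clean algebraic consequence of the pointwise formulas for $d^c u$ and $dd^c u|_{S_R}$.
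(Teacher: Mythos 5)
Your proposal is correct and follows essentially the same route as the paper: you split $16\,d^c u\wedge dd^c u|_{S_R}$ into the same two pieces $\mbox{I}$ and $\mbox{II}$ according to which summand of $dd^c u|_{S_R}$ from Lemma (\ref{lem-cal-001}) participates, note which parts of $d^c u$ survive each wedge, and then observe the cancellation of the $\cos\theta\,|\z|^{-2}\,\Re\{\z\cdot(ru_r)_{,\z}\}$ terms between $\mbox{I}$ and $\mbox{II}$, which is exactly what the paper does in equations (\ref{cal-008})--(\ref{cal-009}).
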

Moreover, we note that $\z$ is actually a coordinate on $\bC_{\infty} \cong \bC\bP^1$,
and this leads us to consider the  Fubini-Study metric on this K\"ahler manifold as 
$$ \omega: =  \frac{i d\z\wedge d\bar\z}{ 2 (1+ |\z|^2 )^2},$$
and it has volume $\int_{\bC\bP^1} \omega = \pi$. 
Then we can rewrite the above $3$-form in the following coordinate-free way 
\begin{eqnarray}
\label{cal-011}
&& 8 \ d^c u \wedge dd^c u|_{S_R}
\nonumber\\
&=&   2 (ru_r \cdot \Delta_{\omega} u ) \omega \wedge d\eta 
\nonumber\\
&-&  \{ \langle \nabla u , \nabla (ru_r) \rangle_{\omega} 
+\langle \nabla (ru_r), \nabla u  \rangle_{\omega}  \} \omega \wedge d\eta
\nonumber\\
&+&  2 (ru_r)^2 \omega \wedge d\eta, 
\end{eqnarray}
where $\nabla$ is the complex gradient for $\z$, and  the inner product is taken on $\bC\bP^1$ as 
$$\langle \nabla v , \nabla  w \rangle_{\omega} =  \mbox{tr}_{\omega} ( \d_{\z} v \wedge \dbar_{\z} w ).$$

Next, we are going to use another change of variables as $t: = \log r \in (-\infty, 0)$, 
and then $u$ can be rewritten as 
\begin{equation}
\label{cal-0111}
u_t(\z): = \hat{u}(t, \z)=  u( e^t, \z, \bar\z).
\end{equation}
Along each complex line through the origin of $\bC^2$, 
we have 
\begin{equation}
\label{cal-0115}
r \d_r u = \d_t \hat{u}  =  \dot{u}_t, 
\end{equation}
where $r = e^t$.
Therefore, we obtain the following decomposition formula for the complex Monge-Amp\`ere mass. 

\begin{theorem}
\label{thm-cal-001}
For any $u\in \cF^{\infty}(B_1)$, we have
\begin{equation}
\label{cal-012}
\frac{1}{\pi} \int_{S_R} d^c u \wedge dd^c u = 2 \int_{\bC\bP^1} ( \dot{u}_t \Delta_{\omega} u_t ) \omega  + \int_{\bC\bP^1} ( \dot{u}_t )^2 \omega,
\end{equation}
 where $\dot{u}_t = \frac{d u_t}{dt}|_{t=T}$ for $e^T = R$. 
\end{theorem}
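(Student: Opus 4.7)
The plan is to integrate the coordinate-free $3$-form in equation (\ref{cal-011}) over $S_R$ via the Hopf fibration $S^1\hookrightarrow S_R\xrightarrow{p}\bC\bP^1$, then apply integration by parts on the closed K\"ahler base $\bC\bP^1$ to combine the two cross-terms with the Laplacian term, and finally rewrite everything in the variable $t=\log r$.

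Because $u\in \cF^{\infty}(B_1)$ is $S^1$-invariant, every coefficient in equation (\ref{cal-011}) --- the quantities $ru_r$, $\Delta_\omega u_t$, the Hermitian inner products $\langle \nabla u_t,\nabla(ru_r)\rangle_\omega$, $(ru_r)^2$, and $\omega$ itself --- descends to the base and is independent of the fibre coordinate $\eta$. Since $\eta$ sweeps $[0,4\pi)$ exactly once on $S_R$, the fibre integration contributes a uniform factor of $4\pi$. Using $ru_r=\dot u_t$ from equation (\ref{cal-0115}), this reduces $\int_{S_R} d^c u\wedge dd^c u$ to
$$ \frac{4\pi}{8}\int_{\bC\bP^1}\Bigl\{\, 2\dot u_t\,(\Delta_\omega u_t) \;-\; \bigl(\langle \nabla u_t, \nabla \dot u_t\rangle_\omega + \langle \nabla \dot u_t, \nabla u_t\rangle_\omega\bigr) \;+\; 2(\dot u_t)^2 \,\Bigr\}\,\omega. $$

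The next step is integration by parts on the closed manifold $\bC\bP^1$. Stokes' theorem applied to the one-form $\dot u_t\,\dbar u_t$ produces
$$ \int_{\bC\bP^1}\langle \nabla \dot u_t,\nabla u_t\rangle_\omega\,\omega \;=\; -\int_{\bC\bP^1}\dot u_t\,(\Delta_\omega u_t)\,\omega, $$
and complex-conjugation yields the same identity with the two factors swapped. Hence the two cross terms sum to $-2\int \dot u_t\,(\Delta_\omega u_t)\,\omega$, the bracket collapses to $4\dot u_t\,\Delta_\omega u_t + 2(\dot u_t)^2$, and dividing by $\pi$ delivers precisely the identity (\ref{cal-012}).

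The main subtlety lies in legitimizing the integration by parts: although the coordinate $\z$ only covers the chart $U_1$, one needs $u_t$ and $\dot u_t$ to extend as $C^2$ functions across the remaining point $[1:0]\in\bC\bP^1$ so that Stokes' theorem on the closed base produces no boundary contribution. This extension follows from the $C^2$-regularity of $u$ on $B_1^*$ together with the compatibility of the local trivializations $\psi'_1$ and $\psi'_2$ for $S^1$-invariant functions recorded in Remark (\ref{rem-hp-001}); once global $C^2$-regularity on $\bC\bP^1$ is in hand, the Stokes computation above is unambiguous and the decomposition formula is established.
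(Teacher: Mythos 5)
Your proposal is correct and follows essentially the same route as the paper's own proof of Theorem~(\ref{thm-cal-001}): integrate the coordinate-free $3$-form of equation~(\ref{cal-011}) over $S_R$, reduce by Fubini along the Hopf fibre, and apply Stokes' theorem on $\bC\bP^1$ to turn the two gradient cross-terms into $+2\int_{\bC\bP^1}\dot u_t\,(\Delta_{\omega}u_t)\,\omega$, exactly the integration by parts recorded in equation~(\ref{cal-013}). You merely track the $4\pi$ fibre factor and the chart-compatibility point (Remark~(\ref{rem-hp-001})) a bit more explicitly than the paper does.
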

\begin{proof}
The idea is to integrate both sides of equation (\ref{cal-011}) on the $3$-sphere $S_R$. 
Due to equation (\ref{hp-0016}) and Remark (\ref{rem-hp-001}),
it is legal to perform the integration
under the complex Hopf-coordinate $(\z, \eta) \in \bC_{\infty}\times S^1$.
After applying Fubini's Theorem, it boils down to 
take the following integration by parts on $\bC\bP^1$.

\begin{eqnarray}
\label{cal-013}
&-& \int_{\bC\bP^1} \d_{\z} u \wedge \dbar_{\z} (r u_r) -  \int_{\bC\bP^1} \d_{\z} (r u_r) \wedge \dbar_{\z} u
\nonumber\\
&=& \int_{\bC\bP^1} \d \dbar_{\z} u \cdot  (r u_r)  + \int_{\bC\bP^1} (r u_r) \cdot \d \dbar_{\z} u
\nonumber\\
&=& 2 \int_{\bC\bP^1}   ( r u_r \cdot  \Delta_{\omega} u )  \omega, 
\end{eqnarray}
where Stoke's theorem is used in the first equality. 
Equipped with the other two terms in the R.H.S. of equation (\ref{cal-011}),
and then our result follows. 

\end{proof}


In order to illustrate the above computation in a clear way,  
we will also invoke the real Hopf-coordinate, 
and perform the integration under it. 
First we note again that the function $u$ is periodic in the angle $\vp$ direction with period $2\pi$, 
and this can be seen directly as follows. 
\begin{eqnarray}
\label{cal-0135}
&& u \left(  |z_1|e^{\frac{i }{2}(\eta + \vp) + i\pi },  |z_2|e^{\frac{i}{2} (\eta - \vp) - i\pi} \right)
\nonumber\\
&=& u \left(  |z_1|e^{\frac{i }{2}(\eta + \vp) + i\pi },  |z_2|e^{\frac{i}{2} (\eta - \vp) + i\pi} \right)
\nonumber\\
&=& u \left(  |z_1|e^{\frac{i }{2}(\eta + \vp)  },  |z_2|e^{\frac{i}{2} (\eta - \vp) } \right). 
\end{eqnarray}

Then in the real Hopf-coordinate, 
we take the wedge product of equation (\ref{com-024}) and (\ref{cal-007}),
and obtain 
\begin{eqnarray}
\label{cal-014} 
&& 8\ d^c u \wedge dd^c u|_{S_R} 
\nonumber\\
&=& r u_r^2 \left\{ \frac{\d_{\vp}(u_{\vp} \cdot u^{-1}_r) }{\sin\theta}  
+ \sin\theta\cdot \d_{\theta} (u_{\theta} \cdot u^{-1}_r)   \right\} d\theta \wedge d \vp \wedge d\eta
\nonumber\\
&+& r u_r^2 \left\{       \cos\theta\cdot    (u_{\theta} \cdot u^{-1}_r)  + \frac{r}{2} \sin\theta    \right\} d\theta \wedge d \vp \wedge d\eta.
\end{eqnarray}

Next we will integrate the two sides of this equation on the $3$-sphere $S_R$. 
Thanks to Fubini's theorem, we can perform the integration by parts in the $\vp$-direction as follows


\begin{eqnarray}
\label{cal-016}
&& \int_{0}^{\pi} d\theta \int_0^{2\pi}  \sin^{-1}\theta(r u_r^2 ) \d_{\vp}(u_{\vp} \cdot u^{-1}_r)   d\vp
\nonumber\\
&=&  \int_{0}^{\pi} d\theta \int_0^{2\pi}   \sin^{-1} \theta \left\{ ( ru_r ) u_{,\vp\vp}  - r u_{\vp} u_{,r\vp}   \right\} d\vp
\nonumber\\
&=& 2  \int_{0}^{2\pi}  \int_0^{\pi}   ( ru_r )   \frac{u_{,\vp\vp}} {\sin\theta} d\theta d\vp. 
\end{eqnarray}
On the other hand, we can also perform the integration by parts in $\theta$-direction as follows.

\begin{eqnarray}
\label{cal-015}
&& \int_{0}^{2\pi} d\vp \int_0^{\pi}  \cos\theta(r u_r^2 )(u_{\theta} \cdot u^{-1}_r)   d\theta
\nonumber\\
&=& \int_{0}^{2\pi} d\vp  \left( \sin\theta (u_{\theta} \cdot r u_r)  \right) \big{\vert}_{0}^{\pi} 
- \int_{0}^{2\pi} d\vp \int_0^{\pi}  \sin\theta \d_{\theta}(u_{\theta} \cdot r u_r)   d\theta
\nonumber\\
&=& - \int_{0}^{2\pi} d\vp \int_0^{\pi}   \sin \theta \left\{  r \d_r (u_{\theta})^2  + r u_r^2  \cdot \d_{\theta}(u_{\theta} \cdot u^{-1}_r)   \right\} d\theta,
\nonumber\\
\end{eqnarray}
and the first term on the R.H.S. of equation (\ref{cal-015}) reads as 
\begin{eqnarray}
\label{cal-0155}
&& - \int_{0}^{2\pi} d\vp \int_0^{\pi}   \sin \theta \left\{  2r ( u_{\theta}  u_{,r\theta} )  \right\} d\theta
\nonumber\\
&=&  - \int_{0}^{2\pi} d\vp  \left(  2\sin\theta (r u_r \cdot u_{\theta}) \right) \big\vert_0^{\pi}
+ 2\int_{0}^{2\pi} \int_0^{\pi} (ru_r) \d_{\theta}(\sin\theta u_{\theta}) d\theta d\vp. 
\nonumber\\
\end{eqnarray}


Finally we combine equation (\ref{cal-014}), (\ref{cal-015}), (\ref{cal-0155}) and (\ref{cal-016}) together to obtain
\begin{eqnarray}
\label{cal-017}
&&  8 \int_{S_R} d^c u \wedge dd^cu 
\nonumber\\
&=& 8\pi  \int_0^{2\pi} \int_{0}^{\pi} (ru_r) \left\{ \ \frac{ u_{,\vp\vp}}{\sin\theta}    +    \d_{\theta}(\sin\theta u_{\theta})  \right\} d\theta  d\vp 
\nonumber\\
&+& 2\pi \int_0^{2\pi} \int_{0}^{\pi}  (r u_r)^2 \sin\theta d\theta d\vp
\nonumber\\
&=&  8\pi \int_{S^2}   (r u_r \cdot \Delta_{\Theta} u )  d\sigma_2  + 2 \pi \int_{S^2}  (r u_r)^2 d\sigma_2,
\end{eqnarray} 
where $d\sigma_2 = \sin\theta d\theta \wedge d\vp$ is the area form of the unit $2$-sphere $S^2$, 
and $\Delta_{\Theta}$ is the standard Laplacian on $S^2$ w.r.t. the round metric, i.e. we have
\begin{eqnarray}
\label{cal-018}
 \Delta_{\Theta} u
= \left\{ \frac{1}{ \sin\theta} \frac{\d}{\d\theta} \left( \sin\theta \frac{\d u}{\d\theta}   \right)    + \frac{1}{\sin^2\theta} \frac{\d^2 u }{\d \vp^2} \right\}. 
\end{eqnarray} 
Then it is clear that equation (\ref{cal-017}) is equivalent to (\ref{cal-012}) after the change of variables 
of the real and complex Hopf-coordinates and $r = e^t$.

\section{The residual Monge-Amp\`ere mass}
For the next step, we are going to estimate the two integrals 
on the R.H.S. of equation (\ref{cal-012}) as $t\rightarrow - \infty$. 
First, we note that the last integral has a deep connection with the Lelong number of $u$.

\subsection{The $L^2$-Lelong number}
\label{sub}
On the one hand, 
it is well known that the following limit is exactly the Lelong number at the origin of a plurisubharmonic function $u$ in $\bC^2$.
$$\nu_u(0) =  \lim_{r\rightarrow 0^+} \nu_u(0, r), $$
if we take 
\begin{equation}
\label{cvx-001}
\nu_u(0, r): = r\d^-_r \left(  \frac{1}{2\pi^2}  \int_{|\xi|= 1} u(r \xi)  \ d\sigma_3(\xi ) \right), 
\end{equation}
where $d\sigma_3$ is the area form of the unit $3$-sphere $S^3$. 
In fact, the integral on the R.H.S. of equation (\ref{cvx-001}) 
is $\log$-convex and non-decreasing in $r$. 
Therefore, the number $\nu_u(0, r)$ is non-negative and non-decreasing in the radial direction.

In fact, it is a standard fact that the Lelong number of a plurisubharmonic function $u$ is invariant under restriction to almost all complex
directions in $\bC^2$. 
From now on, we assume that the plurisubharmonic function $u$
is in the space $\cF(B_1)$ with the normalization $\sup_{B_1}u =-1$. 

Fixing a $\z\in \bC_{\infty}$, the complex line through the origin of $\bC^2$ in the direction  $\z$ can be written as 
$\l_{\z}: = (\lambda \z, \lambda)$ for all $\lambda\in \bC$ and $\z \neq \infty$. 
If $\z = \infty$, then we have $\l_{\infty}: = (\lambda, 0)$. 
Then the following restriction  
$$ u|_{\l_{\z}}  = u (\lambda \z, \lambda) =  u ( |\lambda| \z, |\lambda| ) $$
is actually a  non-decreasing convex function of $\log|\lambda|$ (see \cite{BB}),
and hence is also a non-decreasing convex function of 
$$t = \log r = \log |\lambda| + \frac{1}{2}\log(1+ |\z|^2), $$
for all $t\in (-\infty, 0)$.  
In other words, the negative function $u_t (\z) = \hat{u}(t, \z)$ (defined in equation (\ref{cal-0111}))
is also non-decreasing and convex in $t$.
Therefore, we have for each $\z$ fixed 
\begin{equation}
\label{cvx-0035}
\dot{u}_t \geq 0, \ \  and  \ \ \ddot{u}_t \geq 0,
\end{equation}
for almost all $t\in (-\infty, 0)$. 
Moreover, we infer from equation (\ref{cal-0115}) and (\ref{cvx-002}) 
that the Lelong number at zero of $u|_{\l_{\z}}$ is equal to 
\begin{equation}
\label{cvx-003}
 \nu_{u|_{\l_{\z}}} (0) = \lim_{t\rightarrow -\infty} \dot{u}_t (\z). 
 \end{equation}
Then the invariance of the Lelong number under restrictions reads as follows. 

\begin{lemma}[Remark (2.38), \cite{GZ}]
\label{lem-cvx-001}
For a plurisubharmonic function $u$, 
its Lelong number at the origin $\nu_{u}(0)$ is equal to $\nu_{u|_{\l_{\z}}} (0)$
for almost everywhere $\z\in\bC_{\infty}$. 
Moreover, for such a $\z$,  it is the decreasing limit of $\dot{u}_t (\z)$ as  $t\rightarrow - \infty$. 

\end{lemma}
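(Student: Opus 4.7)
The lemma contains two assertions, and I would handle them in sequence.

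The first assertion, $\nu_u(0) = \nu_{u|_{\l_\z}}(0)$ for almost every $\z \in \bC_\infty$, is classical. The easy direction $\nu_{u|_{\l_\z}}(0) \geq \nu_u(0)$ follows from the standard comparison $u(z)\leq \nu_u(0)\log|z|+O(1)$ near the origin: since $|(\lambda\z,\lambda)|=|\lambda|\sqrt{1+|\z|^2}$, the additive term $\tfrac{\nu_u(0)}{2}\log(1+|\z|^2)$ is absorbed by $O(1)$, giving the desired bound on the restriction. The nontrivial direction uses Siu's upper-semicontinuity theorem (equivalently Skoda's $L^2$-integrability criterion) to show that for every $c > \nu_u(0)$ the exceptional set $\{\z : \nu_{u|_{\l_\z}}(0) > c\}$ is pluripolar in $\bC\bP^1$, hence Lebesgue-null. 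Intersecting over a countable sequence $c_k \downarrow \nu_u(0)$ yields equality almost everywhere. Since this is exactly the content of Remark 2.38 in \cite{GZ}, I would simply invoke the reference.

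The second assertion, that for such $\z$ the Lelong number $\nu_{u|_{\l_\z}}(0)$ equals the decreasing limit of $\dot{u}_t(\z)$ as $t\to -\infty$, reduces to a calculus fact about convex functions. By the $S^1$-invariance of $u$, the subharmonic function $\lambda \mapsto u(\lambda\z,\lambda)$ is rotationally symmetric in $\lambda$, so its Lelong number at $0$ is simply $\lim_{|\lambda|\to 0} u(|\lambda|\z,|\lambda|)/\log|\lambda|$. Under the change of variable $t = \log|\lambda| + \tfrac{1}{2}\log(1+|\z|^2)$ this rewrites as $\lim_{t\to -\infty} u_t(\z)/t$, the additive constant being irrelevant in the limit. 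Now by equation (\ref{cvx-0035}), $\dot{u}_t$ is non-decreasing in $t$, so it decreases to some value $L \in [0,+\infty]$ as $t\to -\infty$. The two-sided supporting-line inequalities for convex functions, namely $u_{t_0} \geq u_s + \dot{u}_s(t_0-s)$ and $u_s \geq u_{t_0} + \dot{u}_{t_0}(s - t_0)$, divided by $s<0$ and passed to the limit $s\to -\infty$ with $t_0$ fixed, produce $\limsup u_s/s \leq \dot{u}_{t_0}$ and $\liminf u_s/s \geq \dot{u}_{t_0}$ respectively; letting $t_0 \to -\infty$ then forces $\lim u_t/t = L$.

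The main technical input is the classical restriction theorem behind the first assertion, which rests on Siu's decomposition of closed positive currents; the remaining manipulation uses only the monotone convexity already established earlier in the paper. Finiteness of $L$ is automatic from the finiteness of $\nu_u(0)$ (a general fact for plurisubharmonic functions) combined with the first assertion.
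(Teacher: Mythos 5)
The paper offers no proof of this lemma: it is stated as a citation to Remark 2.38 of \cite{GZ}, and the slope interpretation (equation (\ref{cvx-003})) is asserted in the paragraph immediately preceding it. Your proposal adopts the same stance for the first assertion (cite the reference, with a plausible sketch via Siu/Skoda) and additionally supplies a detailed argument for the second, so on the whole you are consistent with the paper and essentially correct.

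There is, however, one slip in the convexity argument. From $u_{t_0} \geq u_s + \dot{u}_s(t_0 - s)$, dividing by $s<0$ and sending $s\to -\infty$ gives $\liminf_{s\to-\infty} u_s/s \geq L := \lim_{s\to-\infty}\dot{u}_s$, \emph{not} $\liminf u_s/s \geq \dot{u}_{t_0}$ as you wrote; your stated pair of bounds would force $\lim u_s/s = \dot{u}_{t_0}$ for every fixed $t_0$, hence $\dot{u}_{t_0}$ constant in $t_0$, which is false for a generic convex ray. The correct pair is $\limsup u_s/s \leq \dot{u}_{t_0}$ (hence $\leq L$ after letting $t_0\to-\infty$) together with $\liminf u_s/s \geq L$, and these do give the claimed limit. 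You might also note that in the $S^1$-invariant setting there is a shorter, self-contained route to the first assertion that bypasses Siu/Skoda entirely: combine equation (\ref{cvx-002}) with the finiteness of $M_A(u)$ (Lemma (\ref{lem-gc-001})) to pass the decreasing limit in $t$ inside the integral, obtaining $\pi\nu_u(0) = \int_{\bC\bP^1}\nu_{u|_{\l_{\z}}}(0)\,\omega$, and then the pointwise inequality $\nu_{u|_{\l_{\z}}}(0)\geq\nu_u(0)$ forces equality almost everywhere.
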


Suppose that $u$ is further in $\cF^{\infty}(B_1)$. 
Recall that the total area of a unit $3$-sphere is $2\pi^2$.
Then we can rewrite the area form in the real Hopf-coordinate as 
\begin{equation}
\label{cvx-0011}
8\ d\sigma_3 =  \sin\theta d\theta \wedge d \vp \wedge d\eta, 
\end{equation}
and then equation (\ref{cvx-001}) can be re-written as 

\begin{equation}
\label{cvx-002}
\nu_u(0, r)=  \frac{1}{4\pi}  \int_{S^2} ( r \d_r u ) \ d\sigma_2 = \frac{1}{\pi}  \int_{\bC\bP^1} \dot{u}_t \omega. 
\end{equation}

Then we can obtain an estimate on the last term of equation (\ref{cal-012}),
and it will reveal that this term actually behaves like a kind of \emph{$L^2$-Lelong number}. 

\begin{lemma}
\label{lem-cvx-002}
For any $u\in \cF^{\infty}(B_1)$, we have 
\begin{equation}
\label{cvx-004}
 [\nu_u(0)]^2 = \lim_{t\rightarrow -\infty} \frac{1}{\pi}\int_{\bC\bP^1} (\dot{u}_t)^2 \omega. 
 \end{equation}
\end{lemma}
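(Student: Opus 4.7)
The plan is to establish that $(\dot{u}_t(\zeta))^2$ converges monotonically to $[\nu_u(0)]^2$ pointwise $\omega$-a.e. on $\bC\bP^1$, and then to pull the limit through the integral by dominated convergence.

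First I would exploit the structural properties of $u_t$ on a fixed complex line recorded in equation (\ref{cvx-0035}): $t \mapsto u_t(\zeta)$ is non-decreasing and convex in $t$, hence $\dot{u}_t(\zeta) \geq 0$ and $\dot{u}_t(\zeta)$ is non-decreasing in $t$. Consequently, as $t \to -\infty$, $\dot{u}_t(\zeta)$ decreases monotonically to some non-negative limit, which by equation (\ref{cvx-003}) is exactly the Lelong number $\nu_{u|_{\l_{\zeta}}}(0)$. Invoking Lemma (\ref{lem-cvx-001}), this limit equals $\nu_u(0)$ for $\omega$-almost every $\zeta \in \bC\bP^1$. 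Squaring preserves the monotone convergence, so $(\dot{u}_t(\zeta))^2 \searrow [\nu_u(0)]^2$ for $\omega$-a.e. $\zeta$.

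Next I would legitimize interchanging limit and integral. Fix any $t_0 \in (-\infty, 0)$. Because $u \in \cF^{\infty}(B_1)$ is $C^2$ outside the origin, the function $\zeta \mapsto \dot{u}_{t_0}(\zeta)$ is continuous on the compact K\"ahler manifold $\bC\bP^1$, therefore bounded by some constant $C_{t_0}$. The monotonicity in $t$ then gives $0 \leq \dot{u}_t(\zeta) \leq C_{t_0}$ for every $t \leq t_0$, so the family $(\dot{u}_t)^2$ is uniformly dominated by the $\omega$-integrable function $C_{t_0}^2$. The dominated convergence theorem yields
\begin{equation*}
\lim_{t \to -\infty} \frac{1}{\pi}\int_{\bC\bP^1}(\dot{u}_t)^2\, \omega
= \frac{1}{\pi}\int_{\bC\bP^1}[\nu_u(0)]^2\, \omega
= [\nu_u(0)]^2,
\end{equation*}
using the normalization $\int_{\bC\bP^1} \omega = \pi$ recorded just before Theorem (\ref{thm-cal-001}).

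No serious obstacle is anticipated: the argument is essentially a soft monotone/dominated convergence argument in polar-plus-Fubini--Study coordinates. The only point requiring a modicum of care is the invocation of Lemma (\ref{lem-cvx-001}), which only guarantees that the directional Lelong number agrees with $\nu_u(0)$ \emph{almost everywhere} on $\bC\bP^1$; however this null-set ambiguity is harmless for the integral. It is also worth noting that the $\cF^{\infty}$ hypothesis is precisely what supplies the uniform pointwise bound on $\dot{u}_{t_0}$ used as the dominating function, so the argument will need to be supplemented (via a regularization) when later extending the result to general $u \in \cF(B_1)$.
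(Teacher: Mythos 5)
Your proposal is correct and essentially identical to the paper's proof: both use the monotonicity of $\dot{u}_t(\zeta)$ in $t$ (from convexity along complex lines) together with Lemma (\ref{lem-cvx-001}) to identify the pointwise a.e.\ limit with $\nu_u(0)$, and both then apply the dominated convergence theorem with the dominating constant $M_A = \max_{\zeta}\dot{u}_{-A}(\zeta)$ (your $C_{t_0}$), whose finiteness rests on the $C^2$-regularity assumption.
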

\begin{proof}
For each fixed $\z$, we denote $v_{\z}$ by the limit of the non-decreasing sequence $\dot{u}_t (\z)$ as 
$$ v_{\z} =   \lim_{t\rightarrow -\infty} \dot{u}_t (\z) \geq 0.$$ 
Thanks to Lemma (\ref{lem-cvx-001}), it coincides with the Lelong number $\nu_u(0)$
for almost everywhere $\z \in \bC\bP^1$.
Take 
\begin{equation}
\label{cvx-005}
M_A: = \max_{\z\in\bC\bP^1} \dot{u}_{-A}(\z), 
\end{equation}
for some constant $A>0$. Then we have 
$$ \dot{u}_t (\z) \leq M_A, $$
for all $t < -A$ and $\z\in \bC\bP^1$,
and then our result follows from the dominated convergence theorem as 
\begin{equation}
\label{cvx-006}
 \lim_{t\rightarrow -\infty} \frac{1}{\pi}\int_{\bC\bP^1} (\dot{u}_t)^2 \omega =  \frac{1}{\pi}\int_{\bC\bP^1} v^2_{\z} \omega =  \nu^2_u(0).
\end{equation}

\end{proof}

\subsection{Laplacian estimates}
The next goal is to estimate the first term on the R.H.S. of equation (\ref{cal-012}).
Before moving on, we need to take a closer look at the Laplacian $\Delta_{\omega}$ as follows.  

First, it is a standard fact that 
the Laplacian $\Delta_e$ with respect to the Euclidean coordinate in $\bR^4$ 
has the following decomposition in the hyper-spherical coordinate
\begin{eqnarray}
\label{lap-001}
\Delta_e = \Delta_r + r^{-2}\Delta_{\Xi},
\end{eqnarray}
where $\Delta_r  = r^{-3} \d_r (r^3 \d_r \cdot )$ is the radial part, 
and $\Delta_{\Xi}$ is the standard Laplacian on the unit $3$-sphere.  
That is to say, this operator $\Delta_{\Xi}$ only consists of derivatives 
in the directions perpendicular to the radial direction. 
In the real Hopf-coordinate, it is standard to compute (cf. \cite{LUW})
\begin{eqnarray}
\label{lap-002}
\frac{1}{4} \ \Delta_{\Xi} 
 =   \frac{1}{ \sin\theta} \frac{\d}{\d\theta} \left( \sin\theta \frac{\d}{\d\theta}   \right)    
+ \frac{1}{\sin^2\theta} \left( \frac{\d^2  }{\d \vp^2} +  \frac{\d^2  }{\d \eta^2} \right)
+ \frac{2\cos\theta}{\sin^2\theta} \frac{\d^2}{\d\vp \d\eta}.
\nonumber\\
\end{eqnarray}


If the function $u$ is also $S^1$-invariant, then we can compare equation (\ref{lap-002}) with (\ref{cal-018}), 
and obtain 
\begin{eqnarray}
\label{lap-003}
\Delta_{\Xi} u =  4\ \Delta_{\Theta} u = 2\ \Delta_{\omega} u. 
\end{eqnarray}
Therefore, we have the following Laplacian decomposition formula
for any function $u\in \cF^{\infty}(B_1)$

\begin{eqnarray}
\label{lap-004}
\Delta_e u = \Delta_r u + 2 r^{-2} \Delta_{\omega} u. 
\end{eqnarray}

\begin{lemma}
\label{lem-lap-001}
For any $u\in \cF^{\infty}(B_1)$, there exists a positive constant $M_A$ such that we have 
\begin{eqnarray}
\label{lap-0045}
2 \int_{\bC\bP^1} \dot{u}_t ( \Delta_{\omega} u_t ) \omega \leq 
 M_A \int_{\bC\bP^1} ( \ddot{u}_t +  2 \dot{u}_t) \omega,
\end{eqnarray}
for all $t \leq -A$. 
\end{lemma}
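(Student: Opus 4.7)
The plan is to exploit plurisubharmonicity through the Laplacian decomposition (\ref{lap-004}) to obtain a pointwise one-sided bound on $\Delta_{\omega} u_t$, then subtract a constant from $\dot u_t$ using that $\bC\bP^1$ is closed, and finally close the argument with sign considerations.

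First, I would rewrite the radial Laplacian $\Delta_r u = r^{-3}\d_r(r^3\d_r u)$ in the variable $t = \log r$. Since $r\d_r = \d_t$, a short calculation gives $r^2 \Delta_r u = \ddot u_t + 2\dot u_t$, so $\Delta_e u \geq 0$ combined with (\ref{lap-004}) yields the pointwise lower bound
\begin{equation*}
2\Delta_{\omega} u_t \;\geq\; -(\ddot u_t + 2\dot u_t).
\end{equation*}

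Next, since $\bC\bP^1$ is compact without boundary, Stokes' theorem gives $\int_{\bC\bP^1}\Delta_{\omega} u_t\,\omega = 0$. With $M_A := \max_{\z\in\bC\bP^1}\dot u_{-A}(\z)$ from (\ref{cvx-005}), this allows us to shift the integrand at no cost:
\begin{equation*}
2\int_{\bC\bP^1}\dot u_t\,\Delta_{\omega} u_t\,\omega \;=\; 2\int_{\bC\bP^1}(\dot u_t - M_A)\,\Delta_{\omega} u_t\,\omega.
\end{equation*}
The structural input from (\ref{cvx-0035}) is that $t \mapsto \dot u_t(\z)$ is non-decreasing along each complex line through the origin, so for every $t \leq -A$ and every $\z$ one has $\dot u_t(\z)\leq \dot u_{-A}(\z) \leq M_A$, i.e. the factor $\dot u_t - M_A$ is pointwise non-positive.

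Multiplying the pointwise bound from the first step by this non-positive factor reverses the inequality:
\begin{equation*}
2(\dot u_t - M_A)\Delta_{\omega} u_t \;\leq\; (M_A - \dot u_t)(\ddot u_t + 2\dot u_t) \;\leq\; M_A(\ddot u_t + 2\dot u_t),
\end{equation*}
where the last bound uses that $\dot u_t \geq 0$ and $\ddot u_t \geq 0$, both from (\ref{cvx-0035}). Integrating over $\bC\bP^1$ produces the desired inequality. There is no real obstacle here: the whole argument is driven by the sign structure coming from plurisubharmonicity together with convexity and monotonicity of $u$ in $t$ along complex lines. Finiteness of $M_A$ is automatic from the $C^2$-regularity of $u$ outside the origin combined with compactness of the sphere $S_{e^{-A}}$.
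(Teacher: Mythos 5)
Your proof is correct and takes essentially the same approach as the paper's: both rest on the Laplacian decomposition $\Delta_e u = \Delta_r u + 2r^{-2}\Delta_\omega u$, the pointwise facts $0\le\dot u_t\le M_A$ and $\ddot u_t\ge 0$ coming from convexity along complex lines, $\Delta_e u\ge 0$, and $\int_{\bC\bP^1}\Delta_\omega u_t\,\omega=0$. The bookkeeping differs only cosmetically --- you shift $\dot u_t$ by $M_A$ and bound pointwise, while the paper first drops the nonnegative term $\int\dot u_t\,(\ddot u_t+2\dot u_t)\,\omega$ and then bounds $\int\dot u_t\,r^2\Delta_e u\,\omega$ by $M_A\int r^2\Delta_e u\,\omega$ --- but the resulting chain of inequalities is equivalent.
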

\begin{proof}
Thanks to the Laplacian decomposition formula (equation (\ref{lap-004})), 
the first integral on the R.H.S. of equation (\ref{cal-012})
can be rewritten as 
\begin{equation}
\label{lap-005}
2 \int_{\bC\bP^1} \dot{u}_t ( \Delta_{\omega} u_t ) \omega 
=  \int_{\bC\bP^1} \dot{u}_t  (r^2 \Delta_e u) \omega  - \int_{\bC\bP^1} \dot{u}_t  (r^2 \Delta_r u) \omega.
\end{equation}
Moreover, the second term on the R.H.S. of equation (\ref{lap-005}) is 
\begin{eqnarray}
\label{lap-006}
\int_{\bC\bP^1} \dot{u}_t  (r^2 \Delta_r u) \omega &=& \int_{\bC\bP^1} (ru_r) \left\{ (r^2 u_{,rr} + r u_r  ) + 2 ru_r \right\} \omega
\nonumber\\
&=& \int_{\bC\bP^1} \dot{u}_t  ( \ddot{u}_t +  2 \dot{u}_t) \omega. 
\end{eqnarray}
Thanks to equation (\ref{cvx-0035}),   
the integral in equation (\ref{lap-006}) is actually non-negative, 
and we obtain 
\begin{equation}
\label{lap-007}
2 \int_{\bC\bP^1} \dot{u}_t ( \Delta_{\omega} u_t ) \omega \leq  \int_{\bC\bP^1} \dot{u}_t  (r^2 \Delta_e u) \omega. 
\end{equation}
Furthermore, we have $\Delta_e u \geq 0$ since $u$ is plurisubharmonic on $\bC^2$. 
Take $M_A$ as the maximum of $\dot{u}_{-A} $ on $\bC\bP^1$ (equation (\ref{cvx-005})), and it follows for all $t < -A$ 
\begin{eqnarray}
\label{lap-008}
2 \int_{\bC\bP^1} \dot{u}_t ( \Delta_{\omega} u_t ) \omega & \leq&  \int_{\bC\bP^1} \dot{u}_t  (r^2 \Delta_e u) \omega
\nonumber\\
&\leq & M_A  \int_{\bC\bP^1}   (r^2 \Delta_r u + 2 \Delta_{\omega} u) \omega
\nonumber\\
&= & M_A \int_{\bC\bP^1} ( \ddot{u}_t +  2 \dot{u}_t) \omega. 
\end{eqnarray}
Here we have used 
$$\int_{\bC\bP^1} ( \Delta_{\omega} u ) \omega = 0 $$
in the last equality in equation (\ref{lap-008}).
Then our result follows. 

\end{proof}

Now everything boils down to study the asymptotic behavior of 
the following integral 
\begin{equation}
\label{lap-0081}
 \int_{\bC\bP^1} \ddot{u}_t  \omega = \frac{d}{dt} \left ( \int_{\bC\bP^1} \dot{u}_t \omega \right),
\end{equation}
as $t\rightarrow -\infty$. 
To this purpose, we introduce the following two non-negative functionals  
\begin{equation}
\label{lap-0082}
 I_u (t): =  \int_{\bC\bP^1} \dot{u}_t \omega; \ \ \ J_u (t) : =  \int_{\bC\bP^1}  (\dot{u}_t)^2 \omega, 
\end{equation}
and the negative functional as a primitive of $I_u$
\begin{equation}
\label{lap-0083}
\cI (u_t) : =  \int_{\bC\bP^1} u_t \omega, 
\end{equation}
for all $t\in (-\infty, 0)$. 
Then the following observation is crucial. 

\begin{prop}
\label{prop-lap-001}
Suppose $u\in\cF^{\infty}(B_1)$ has the Lelong number $\nu_u(0) \geq 0$ at the origin. 
Then there exists a sequence 
$t_i \in(-\infty, 0)$ converging to $-\infty$ such that
$$ \lim_{i\rightarrow +\infty} \frac{d I_u}{dt}   (t_i) = 0. $$
\end{prop}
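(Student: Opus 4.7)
The plan is to exhibit $I_u(t)$ as a bounded monotone function on a neighborhood of $-\infty$, so that its derivative (which is non-negative) is integrable there, and hence must have liminf zero at $-\infty$.

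First I would identify $I_u$ with the Lelong number average. By equation (\ref{cvx-002}),
\begin{equation*}
I_u(t) = \pi\, \nu_u(0, e^t).
\end{equation*}
It is classical pluripotential theory that $r\mapsto \nu_u(0,r)$ is non-decreasing (a consequence of the $\log$-convexity of spherical averages of plurisubharmonic functions), and that $\nu_u(0,r) \searrow \nu_u(0)$ as $r \to 0^+$. Under the change of variable $r = e^t$, this means $I_u$ is non-decreasing in $t$ with finite limit $\pi\,\nu_u(0) \geq 0$ as $t\to -\infty$. Alternatively, monotonicity follows directly from (\ref{cvx-0035}): since $u \in C^2(B_1^*)$ and $\bC\bP^1$ is compact, dominated convergence justifies differentiation under the integral to give
\begin{equation*}
\frac{dI_u}{dt}(t) \;=\; \int_{\bC\bP^1}\ddot u_t\,\omega \;\geq\; 0.
\end{equation*}

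Next I would fix any $T_0 < 0$ and compute, by the fundamental theorem of calculus applied to the continuous monotone function $I_u$,
\begin{equation*}
\int_{-\infty}^{T_0}\!\frac{dI_u}{dt}\,dt \;=\; I_u(T_0) - \lim_{t\to -\infty} I_u(t) \;=\; I_u(T_0) - \pi\,\nu_u(0).
\end{equation*}
Since $\dot u_{T_0}$ is a continuous function on compact $\bC\bP^1$, $I_u(T_0)$ is finite, and hence the non-negative function $\frac{dI_u}{dt}$ is Lebesgue integrable on $(-\infty, T_0)$.

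Finally, an integrable non-negative function on a half-line must have liminf zero at the endpoint. Indeed, if $\liminf_{t\to -\infty}\frac{dI_u}{dt}(t) = c > 0$, then $\frac{dI_u}{dt}(t) \geq c/2$ on some half-line $(-\infty, T_1)$, contradicting integrability. Therefore there exists a sequence $t_i \to -\infty$ along which $\frac{dI_u}{dt}(t_i) \to 0$, as claimed. The argument is essentially soft real analysis once the bounded-monotone structure of $I_u$ is recognized; there is no substantive obstacle beyond recognizing that the Lelong average packages $I_u$ as a bounded monotone function.
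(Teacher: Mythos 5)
Your proof is correct and follows essentially the same approach as the paper: both recognize that $I_u$ is non-negative, non-decreasing, and has a finite limit $\pi\nu_u(0)$ as $t\to -\infty$, and both derive the conclusion by showing that $I'_u \geq 0$ cannot stay bounded away from zero on a half-line near $-\infty$. The only cosmetic difference is the packaging of the final step: the paper argues by contradiction that $I'_u \geq \ep/2$ would force the graph of $I_u$ below a line of slope $\ep/2$, making $I_u$ negative for $t$ very negative, while you phrase the same fact via the fundamental theorem of calculus as the Lebesgue integrability of $I'_u$ over $(-\infty, T_0]$. Both routes are sound and equivalent in substance.
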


\begin{proof}
First we note that $\cI(u_t)$
is a non-decreasing convex function along $t\in (-\infty, 0)$.
As its first derivative, 
$I_u $ is a non-negative, non-decreasing function in $t$. 
Moreover, we have 
\begin{equation}
\label{lap-009}
I'_u (t): = \frac{d I_u}{dt} (t)= \int_{\bC\bP^1} \ddot{u}_t  \omega \geq 0. 
\end{equation}
Thanks to equation (\ref{cvx-002}), we have its limit as a decreasing sequence in $t$ 
\begin{equation}
\label{lap-010}
\lim_{t \rightarrow  -\infty} I_u (t)  = \pi \nu_u (0) \geq 0.
\end{equation}
Hence $I_u$ is a non-negative $C^1$-continuous function 
decreasing to $\pi\nu_u(0)$ as $t\rightarrow -\infty$. 
Then there exists a $T_0<0$ such that we have 
$$ I_u(t) \leq \pi\nu_u(0) + 1, $$
for all $t< T_0$. 
Now it is sufficient to prove  
\begin{equation}
\label{lap-011}
\liminf_{t\rightarrow -\infty} I'_u (t) =0.
\end{equation}

Suppose not, and then there exists an $\ep >0$ 
such that 
\begin{equation}
\label{lap-012}
\liminf_{t\rightarrow -\infty} I'_u (t)  > \ep,
\end{equation}
and then there exists a $T_1 < 0$ such that for all $t < T_1$ 
\begin{equation}
\label{lap-013}
 I'_u (t)  > \ep /2. 
\end{equation}
Take $T_2: = \min\{ T_0, T_1\} -1$, 
and then the graph of $I_u$ will be under the following straight line for all $t < T_2$
$$ y (x)  = \frac{\ep}{2} x + I_u(T_2) - \frac{\ep T_2}{2} \leq \frac{\ep}{2} x + \pi\nu_u(0) +1 - \frac{\ep T_2}{2}, $$
but this implies $I_u (t) < 0$ for all $t$ negative enough, which is a contradiction. 

\end{proof}

Then we are ready to prove the first main theorem.

\begin{theorem}
\label{thm-001}
For any $u\in \cF^{\infty}(B_1)$, 
its residual Monge-Amp\`ere mass $\tau_u (0) $ is zero, if its Lelong number $\nu_u(0)$ is zero at the origin. 
\end{theorem}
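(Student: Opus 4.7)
The plan is to evaluate the decomposition formula (Theorem (\ref{thm-cal-001})) along a carefully chosen sequence of radii $R_i = e^{t_i}$ with $t_i \to -\infty$, and show that both integrals on the right-hand side vanish in the limit when $\nu_u(0) = 0$.

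First, combining Proposition (\ref{prop-rm-001}) with equation (\ref{rm-001}), we have
\[
\pi^2 \tau_u(0) = \lim_{R \to 0} \int_{S_R} d^c u \wedge dd^c u,
\]
where the limit is actually monotone decreasing in $R$. In particular, to prove $\tau_u(0) = 0$ it suffices to exhibit a single sequence $R_i \to 0$ along which the boundary integral tends to zero.

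Next, apply Theorem (\ref{thm-cal-001}) at $R_i = e^{t_i}$:
\[
\frac{1}{\pi}\int_{S_{R_i}} d^c u \wedge dd^c u = 2 \int_{\bC\bP^1} \dot{u}_{t_i}\, \Delta_\omega u_{t_i}\, \omega + \int_{\bC\bP^1} (\dot{u}_{t_i})^2\, \omega.
\]
Select the sequence $t_i \to -\infty$ guaranteed by Proposition (\ref{prop-lap-001}), so that $I'_u(t_i) \to 0$. Since $\nu_u(0) = 0$, Lemma (\ref{lem-cvx-002}) forces the $L^2$--Lelong term to satisfy
\[
\int_{\bC\bP^1} (\dot{u}_{t_i})^2\, \omega \longrightarrow \pi [\nu_u(0)]^2 = 0.
\]
For the Laplacian term, fix any $A > 0$ and recall that $M_A = \max_{\z \in \bC\bP^1} \dot{u}_{-A}(\z)$ is finite because $u \in \cF^\infty(B_1)$ is $C^2$ on the compact sphere $S_{e^{-A}}$. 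For $t_i < -A$, Lemma (\ref{lem-lap-001}) gives
\[
2 \int_{\bC\bP^1} \dot{u}_{t_i}\, \Delta_\omega u_{t_i}\, \omega \leq M_A\bigl(I'_u(t_i) + 2\, I_u(t_i)\bigr).
\]
By equation (\ref{lap-010}) we have $I_u(t_i) \to \pi \nu_u(0) = 0$, and by construction $I'_u(t_i) \to 0$, so the right-hand side tends to $0$.

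Combining these two estimates yields
\[
\limsup_{i \to \infty} \frac{1}{\pi}\int_{S_{R_i}} d^c u \wedge dd^c u \leq 0,
\]
and since the integrand is a positive measure the limsup is also nonnegative. Hence the limit is $0$, which forces $\tau_u(0) = 0$. The main subtlety is that the Laplacian term $2 \int \dot{u}_t\, \Delta_\omega u_t\, \omega$ does not have a definite sign a priori; we bypass this by using the positivity of the total Monge-Amp\`ere mass on $B_{R_i}$ together with the one-sided bound of Lemma (\ref{lem-lap-001}), where the key analytic input is the existence of the sequence $t_i$ from Proposition (\ref{prop-lap-001}) along which $\frac{d}{dt}\int_{\bC\bP^1} \dot{u}_t\, \omega$ vanishes.
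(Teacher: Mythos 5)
Your proposal is correct and follows essentially the same route as the paper's proof: you combine the decomposition formula (Theorem \ref{thm-cal-001}) and Proposition \ref{prop-rm-001} to reduce to the two boundary integrals, bound the Laplacian term by $M_A(I'_u(t)+2I_u(t))$ via Lemma \ref{lem-lap-001}, send $J_u(t_i)\to 0$ by Lemma \ref{lem-cvx-002}, choose $t_i$ from Proposition \ref{prop-lap-001} so that $I'_u(t_i)\to 0$, and conclude with the monotonicity of $\mathrm{MA}(u)(B_r)$ in $r$. The only cosmetic difference is that you phrase the final step as a $\limsup\le 0$ argument, whereas the paper shows directly that the right-hand side of inequality (\ref{lap-016}) converges to zero; both finish in the same way.
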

\begin{proof}
Thanks to Proposition (\ref{prop-rm-001}) and the decomposition formula (equation (\ref{cal-012})), 
we have 
\begin{equation}
\label{lap-014}
 \pi^{-1}  \mbox{MA}(u)(B_r) = 2 \int_{\bC\bP^1} ( \dot{u}_t \Delta_{\omega} u_t ) \omega  + \int_{\bC\bP^1} ( \dot{u}_t )^2 \omega,
 \end{equation}
for $r = e^t$. 
Thanks to Lemma (\ref{lap-001}), we can further estimate
\begin{equation}
\label{lap-015}
\pi^{-1}  \mbox{MA}(u)(B_r) \leq  M_A \int_{\bC\bP^1} ( \ddot{u}_t +  2 \dot{u}_t) \omega + \int_{\bC\bP^1} ( \dot{u}_t )^2 \omega,
\end{equation}
for a uniform constant $M_A \geq 0 $ and all $t \leq -A$. 
In fact, we can re-write the above estimate (equation (\ref{lap-015}) as 

\begin{equation}
\label{lap-0155}
 \pi^{-1} \mbox{MA}(u)(B_r) \leq  M_A \left\{  I'_u (t) + 2 I_u (t) \right\} + J_u (t),
\end{equation}
for $r = e^t \leq e^{-A}$. 
Due to Lemma (\ref{lem-cvx-002}) and equation (\ref{cvx-002}),
the zero Lelong number $\nu_u(0) = 0 $ implies 
$$ I_u (t) \rightarrow 0, \ \ \  \mbox{and} \ \ \   J_u(t) \rightarrow 0, $$
as $t\rightarrow -\infty$. 
Moreover,  we infer from Proposition (\ref{prop-lap-001}) that 
there exists a sequence $t_i = \log r_i$ converging to $-\infty$ satisfying

\begin{equation}
\label{lap-016}
\pi^{-1} \mbox{MA}(u)(B_{r_i}) \leq  M_A \left\{   I'_u (t_i) + 2 I_u (t_i) \right\}  + J_u (t_i),
\end{equation}
and the R.H.S. of equation (\ref{lap-016}) converges to zero as $r_i \rightarrow 0$. 
However, the Monge-Amp\`ere mass of $u$ on the $r$-ball $\mbox{MA}(u)(B_r)$
is non-decreasing in $r$. 
Then it follows 
\begin{equation}
\label{lap-017}
\tau_u(0) = \frac{1}{\pi^2} \lim_{r\rightarrow 0^+} \mbox{MA}(u)(B_r)  = 0.  
\end{equation}

\end{proof}

\subsection{Maximal directional Lelong numbers}
Recall that the residual Monge-Amp\`ere mass at the origin of a plurisubharmonic function $u$ is 
equal to the following decreasing limit 
$$ \t_u (0) : = \frac{1}{\pi^2} \lim_{r\rightarrow 0^+} \mbox{MA}(u)(B_r).  $$

As we have seen in the proof of the above Theorem (\ref{thm-001}),
the inequality in equation (\ref{lap-0155}) has played a major role.  
In fact, this inequality implies a stronger result on the estimate of the residual mass. 
First we will re-state it as follows. 

\begin{lemma}
\label{lem-dn-000}
For a function $u\in \cF^{\infty}(B_1)$, we have 
\begin{equation}
\label{dn-001}
 \pi^{-1} \emph{MA}(u)(B_r) \leq  M_A \left\{  I'_u (t) + 2 I_u (t) \right\} + J_u (t),
\end{equation}
for all $ t= \log r \in ( -\infty, -A]$. 
\end{lemma}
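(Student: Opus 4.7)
The plan is to assemble this lemma directly from the machinery already established in the preceding sections; no new argument is needed beyond a careful bookkeeping of what has been proved.

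First I would invoke Proposition (\ref{prop-rm-001}) to write
$$ \text{MA}(u)(B_r) = \int_{B_r}(dd^c u)^2 = \int_{S_r} d^c u \wedge dd^c u,$$
which is legitimate because $u\in\cF^{\infty}(B_1)$ is $C^2$ on $B_1^*$ and $r\in(0,1)$. Then I would apply the decomposition formula of Theorem (\ref{thm-cal-001}) to obtain
$$ \pi^{-1}\text{MA}(u)(B_r) = 2\int_{\bC\bP^1}\dot{u}_t(\Delta_\omega u_t)\,\omega + \int_{\bC\bP^1}(\dot{u}_t)^2\,\omega,$$
where $t = \log r$. By definition of $J_u(t)$ in (\ref{lap-0082}), the second term on the right is exactly $J_u(t)$.

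Next, I would handle the first term via the Laplacian estimate in Lemma (\ref{lem-lap-001}), which asserts that for $t\leq -A$,
$$ 2\int_{\bC\bP^1}\dot{u}_t(\Delta_\omega u_t)\,\omega \leq M_A \int_{\bC\bP^1}(\ddot{u}_t + 2\dot{u}_t)\,\omega.$$
By equation (\ref{lap-009}) the integral $\int_{\bC\bP^1}\ddot{u}_t\,\omega$ equals $I_u'(t)$, and by the definition of $I_u(t)$ the integral $\int_{\bC\bP^1}\dot{u}_t\,\omega$ equals $I_u(t)$, so the bound reads $M_A\{I_u'(t) + 2I_u(t)\}$. Combining this with the $J_u(t)$ term delivers the inequality.

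There is no real obstacle here; the lemma is essentially a repackaging of inequality (\ref{lap-0155}), which already appeared inside the proof of Theorem (\ref{thm-001}). The only mild point worth mentioning is that $M_A$ is the constant from (\ref{cvx-005}), namely the maximum of $\dot{u}_{-A}$ on $\bC\bP^1$, and the bound holds uniformly for all $t\le -A$ precisely because $\dot{u}_t(\zeta)$ is non-decreasing in $t$ (from convexity, see (\ref{cvx-0035})), so $M_A$ serves as a valid pointwise upper bound for $\dot{u}_t$ throughout $(-\infty,-A]$. This monotonicity is what allows the single constant $M_A$ to control the estimate on the whole half-line, and writing the inequality in the form (\ref{dn-001}) isolates it cleanly for the subsequent applications in Theorem (\ref{thm-int-0015}).
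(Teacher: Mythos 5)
Your proposal is correct and follows precisely the same route as the paper: the lemma is indeed just a restatement of inequality (\ref{lap-0155}), obtained by combining Proposition (\ref{prop-rm-001}), the decomposition formula of Theorem (\ref{thm-cal-001}), and the Laplacian estimate of Lemma (\ref{lem-lap-001}), with the functionals $I_u$, $I_u'$, $J_u$ substituted in. Your closing remark on the role of the monotonicity of $\dot{u}_t$ in $t$ --- which lets the single constant $M_A$ bound $\dot{u}_t$ on the whole half-line $t\le -A$ --- is exactly the point the paper relies on inside the proof of Lemma (\ref{lem-lap-001}).
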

Here the uniform constant $M_A $ is defined as 
$$ M_A: =  = \sup_{\z\in \bC\bP^1} \dot{u}_{-A}(\z) \geq 0, $$
for any $A>0$. 
Furthermore, it can be generalized to
to the family $\cF(B_1)$. 

\begin{defn}
\label{defn-dn-001}
For a function $u\in \cF(B_1)$,
the maximal directional Lelong number of $u$ at a distance $ A \in (0, \infty)$ to the origin 
is defined as 
$$ M_{A}(u): = \sup_{\z\in \bC\bP^1} \d_t^+u_t (\z)|_{t = -A} \in [0, +\infty]. $$
\end{defn}
As we have discussed in Section (\ref{sub}), 
the restriction $u|_{\l_{\z}}$ is a non-decreasing convex 
function along $t = \log r$. Hence the right derivative 
\begin{equation}
\label{dn-0010}
 \d^+_{t} u_t(\z)|_{t =t_0} =\d^+_{t} \hat{u}( t, \z)|_{t =t_0}  = \frac{d}{dt}|_{t= t_0^+} (u|_{\l_{\z}}) 
 \end{equation}
is well defined everywhere on $B^*_1$. 
Moreover, it is also non-negative and non-decreasing in $t$. 
Then it follows 
that this number $M_A(u)$ is non-negative, 
and non-increasing in $A$.  
Therefore, we can take its decreasing limit as $A\rightarrow +\infty$. 

\begin{defn}
\label{defn-dn-002}
For a function $u\in \cF(B_1)$,
the maximal directional Lelong number of $u$ at the origin 
is defined as 
$$ \lambda_u(0): = \lim_{A\rightarrow +\infty} M_A(u).$$
\end{defn}

There is no a priori reason that this number is finite, 
but it is for any $u$ in the sub-collection $\cF^{\infty}(B_1)$.
Moreover, the constant $M_A(u)$ coincides with the uniform constant 
$M_A$ for any $A>0$ in equation (\ref{dn-001}) in this case, 
and then we can obtain the following estimate.

\begin{theorem}
\label{thm-dn-001}
For a function $u\in\cF^{\infty}(B_1)$, we have 
\begin{equation}
\label{dn-002}
\t_{u}(0) \leq 2\lambda_{u}(0) \cdot \nu_u(0) + [\nu_u(0)]^2.  
\end{equation}
\end{theorem}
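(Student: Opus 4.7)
The plan is to combine Lemma~\ref{lem-dn-000}, the limit computations already established for the functionals $I_u$ and $J_u$, and the ``good sequence'' from Proposition~\ref{prop-lap-001}, and then optimize in the parameter $A$.

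Fix an arbitrary $A>0$. Since $u\in\cF^\infty(B_1)$, the constant $M_A(u)$ from Definition~\ref{defn-dn-001} coincides with the finite number $\sup_{\z\in\bC\bP^1}\dot{u}_{-A}(\z)$ appearing in Lemma~\ref{lem-dn-000}. Thus, for every $t\le -A$,
\begin{equation*}
\pi^{-1}\,\mathrm{MA}(u)(B_{e^t})\;\le\;M_A(u)\bigl\{\,I'_u(t)+2I_u(t)\,\bigr\}+J_u(t).
\end{equation*}
Now invoke Proposition~\ref{prop-lap-001} to extract a sequence $t_i\to -\infty$ along which $I'_u(t_i)\to 0$, and evaluate the above inequality on this sequence (for $i$ large enough so that $t_i\le -A$).

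The next step is to pass to the limit term by term. The left-hand side converges to $\pi\,\tau_u(0)$ by the definition of the residual mass (equation~(\ref{rm-001})). On the right-hand side, $I_u(t_i)\to\pi\nu_u(0)$ by equation~(\ref{cvx-002}) and the definition of the Lelong number, while $J_u(t_i)\to\pi[\nu_u(0)]^2$ by Lemma~\ref{lem-cvx-002}; the remaining term $M_A(u)\,I'_u(t_i)$ vanishes by the choice of $t_i$. Dividing through by $\pi$ yields
\begin{equation*}
\tau_u(0)\;\le\;2\,M_A(u)\,\nu_u(0)+[\nu_u(0)]^2,
\end{equation*}
which is the desired bound but with the auxiliary parameter $A$ still present.

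To eliminate $A$, I observe that the left-hand side is independent of $A$ while $M_A(u)$ is non-increasing in $A$ with decreasing limit $\lambda_u(0)$ by Definition~\ref{defn-dn-002}. Letting $A\to+\infty$ therefore yields the advertised inequality. There is no substantive obstacle: the decomposition formula (Theorem~\ref{thm-cal-001}) has already been massaged into Lemma~\ref{lem-dn-000}, and the two ingredients needed to convert it into an asymptotic statement---the vanishing subsequence for $I'_u$ and the $L^2$-Lelong convergence for $J_u$---are already in place. The only point requiring a moment of care is the case $\nu_u(0)=+\infty$ (if it can occur), where the inequality is vacuous; otherwise all limits above are finite and the passage is routine.
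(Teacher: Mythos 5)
Your proof is correct and follows essentially the same route as the paper's: it invokes Lemma~(\ref{lem-dn-000}), extracts the subsequence $t_i \to -\infty$ with $I'_u(t_i) \to 0$ from Proposition~(\ref{prop-lap-001}), passes to the limit using Lemma~(\ref{lem-cvx-002}) and the monotonicity of $\mbox{MA}(u)(B_r)$ in $r$, and then lets $A\to+\infty$ to replace $M_A(u)$ by $\lambda_u(0)$. The only cosmetic difference is that you spell out the term-by-term limit and the elimination of $A$ more explicitly than the paper does.
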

\begin{proof}
It is enough to prove the following inequality for all $A$ large
\begin{equation}
\label{dn-0020}
\t_{u}(0) \leq 2 M_A(u)\cdot \nu_u(0) + [\nu_u(0)]^2.
\end{equation}
Thanks to  Lemma (\ref{lem-cvx-002}),
we have 
$$  \pi \nu_u(0) = \lim_{t\rightarrow -\infty} I_u(t);  \ \ \   \pi [\nu_u(0)]^2 = \lim_{t\rightarrow -\infty} J_u(t). $$

Moreover, we can extract a sequence $t_i \rightarrow -\infty$
such that $I'_u(t_i) \rightarrow 0$ 
due to Proposition (\ref{prop-lap-001}). 
Then the result follows 
by taking limits on both sides of equation (\ref{dn-001}), 
where we have used the fact 
that the Monge-Amp\`ere 
mass $\mbox{MA}(u)(B_r)$
is non-decreasing in $r$. 

\end{proof}

It is apparent that 
the estimate in Theorem (\ref{thm-dn-001})
implies the zero mass conjecture for a function $u\in \cF^{\infty}(B_1)$.


 \section{The general case}
For a general plurisubharmonic function $u$ on a domain $D$, 
it is a standard fact that $u$ is in the Sobolev space $W_{loc}^{1,p}(D)$
for any $1\leq p <2$.  
If $D$ is in $\bC^2$ and $u$ is in the Cegrell class, cf. \cite{Ceg04}, 
then we can even take $p =2$ 
due to a result by Blocki, cf. \cite{Blo04}. 
In particular, for a function $u\in\cF(B_1)$,
the weak derivative $\nabla u$ 
exists almost everywhere as an $L^2_{loc}$-function on $B_1$. 

\subsection{The radial bound }

Writing $t = \log r$, the first partial $t$-derivative of $u$ also 
exists as an $L^2_{loc}$ function on $B_1$, since we have 
\begin{equation}
\label{gc-001}
 \d_t u (z) =  r\d_r u (z) = r\langle \nabla u (z), \hat{r}_{\vec{z}} \rangle, 
 \end{equation}
where  $\vec{z}$ is the  point vector of $z$ in $\bR^4$, 
and $\hat{r}_{\vec{z}}$ is the unit normal vector in the direction of $\vec{z}$. 
According to our definitions in Section (\ref{sub}), it is also equal to 
the $t$-derivative of $u$ under the restriction to the complex line $\l_{\z}$ as 
\begin{equation}
\label{gc-002}
\d_t u (z) = \d_t u(e^t, \z, \bar\z) = \dot{u}_t(\z)= \frac{d}{dt} u|_{\l_{\z}}, 
\end{equation}
for almost all $t\in (-\infty, 0)$ and $\z\in \bC\bP^1$, where $(r, \eta, \z, \bar\z)$ is the complex Hopf-coordinate of $z\in B_1^*$.  

Next we claim that this partial derivative $\d_t u$ is not merely $L^2$,
but actually bounded on $B^*_R$
for any $R\in (0,1)$, 
and this will be implied by the following crucial observation.  

\begin{lemma}
\label{lem-gc-001}
For a function $u\in \cF(B_1)$, its
maximal directional Lelong number $M_A(u)$ at distance $A$ is finite for any $A>0$. 
\end{lemma}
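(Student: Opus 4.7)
The plan is to exploit that $u$ restricted to each complex line through the origin is a non-decreasing convex function of $t=\log r$, combined with the uniform upper bound $\sup_{B_1}u\leq -1$ coming from the normalization and the uniform lower bound of $u$ on the compact sphere $S_{e^{-A}}\subset B_1^*$ coming from the $L^\infty_{loc}(B_1^*)$ hypothesis. The slope of the secant of a convex function is bounded by the spread of its values, and this will be enough.

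Fix $A>0$ and $\zeta\in\bC\bP^1$. As recalled in Section~\ref{sub}, by $S^1$-invariance the restriction $u|_{\ell_\zeta}$ depends only on $r$, and by subharmonicity on the complex disc $\ell_\zeta\cap B_1$ it is a non-decreasing convex function of $t=\log r$ on $(-\infty,0)$. In particular the right derivative $\d_t^+\hat u(t,\zeta)|_{t=-A}$ is finite, and convexity gives the secant estimate
\begin{equation*}
\d_t^+\hat u(t,\zeta)|_{t=-A}\ \leq\ \frac{\hat u(-A/2,\zeta)-\hat u(-A,\zeta)}{A/2}.
\end{equation*}
It now suffices to bound the right-hand side uniformly in $\zeta$. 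The point of $\bC^2$ parametrised by $(t,\zeta)=(-A/2,\zeta)$ lies at radius $e^{-A/2}<1$, hence inside $B_1$, so $\hat u(-A/2,\zeta)\leq \sup_{B_1}u\leq -1$. The point parametrised by $(-A,\zeta)$ lies on $S_{e^{-A}}$, a compact subset of $B_1^*$; since $u\in L^\infty_{loc}(B_1^*)$, the constant $C_A:=\inf_{S_{e^{-A}}}u$ is finite, and $\hat u(-A,\zeta)\geq C_A$. Combining, one obtains
\begin{equation*}
M_A(u)\ \leq\ \frac{2(-1-C_A)}{A}\ <\ +\infty,
\end{equation*}
which is independent of $\zeta$ and proves the lemma.

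I do not foresee a serious obstacle: the argument rests on three well-understood ingredients, namely convexity in $\log r$ of $S^1$-invariant psh functions along complex lines, the normalised upper bound on $B_1$, and the local $L^\infty$ assumption on $B_1^*$. The only point that needs a small amount of care is chart dependence on $\bC\bP^1$ at $\zeta=\infty$, but since $S_{e^{-A}}$ is one compact set in $\bC^2$ and we only need a global infimum, this causes no trouble. If a quantitative tracking of $M_A(u)$ as $A\to+\infty$ were desired in order to control $\lambda_u(0)$, one would instead have to analyse how $C_A$ degenerates as $A$ grows; but for finiteness at each fixed $A$, the secant bound above is already sufficient.
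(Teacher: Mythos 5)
Your proof is correct and essentially the same as the paper's: both rely on the normalization $\sup_{B_1}u\leq -1$, the finite lower bound of $u$ on the compact sphere $S_{e^{-A}}\subset B_1^*$ coming from $L^\infty_{loc}$, and the convexity of $t\mapsto u|_{\ell_\zeta}$ to get a uniform slope bound. The only cosmetic difference is that you state the secant inequality directly, whereas the paper argues by contradiction (assuming $M_A(u)=+\infty$ and then producing a point of $B_1$ where $u\geq 0$); these are two phrasings of the same estimate.
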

\begin{proof}
For each boundary sphere $S_R = \d B_R$ with $R\in (0,1)$,
we claim that 
there exists a constant $C_R>0$
such that it satisfies 
\begin{equation}
\label{gc-003}
u|_{S_R} \geq -C_R.
\end{equation} 
This is because $u$ is an $L^{\infty}_{loc}$-function in $B_1^*$, 
and hence it is in the space $L^{\infty}(S_R)$ for each $R\in (0,1)$.  
Then our claim follows since $u$ is 
actually everywhere defined as a locally bounded plurisubharmonic function on $B^*_1$.

Suppose on the contrary, we have 
$M_A(u) = +\infty$ for some $A>0$. 
Then there exists a sequence of points $\z_j \in \bC\bP^1$
such that we have 
\begin{equation}
\label{gc-004}
 \d_t^+ u_t(\z_j)|_{t = -A}  = \frac{d}{dt}|_{t= (-A)^+} \left( u|_{\l_{\z_j}} \right) 
\end{equation} 
diverges to $+\infty$ as $j\rightarrow +\infty$. 
In particular, we can pick a point $\xi\in \bC\bP^1$ 
among this sequence 
such that we have 
\begin{equation}
\label{gc-005}
 \d_t^+ u_t(\xi)|_{t = -A}  > \frac{2C_R}{A},
\end{equation} 
for $R= e^{-A}$. 
However, as a convex function of $t$, 
the graph of $u_t(\xi)$ is above the following straight line
$$ y(x) =  \frac{2C_R}{A} (x + A) - C_R, $$
 for all $t\in [-A, -A/2]$, 
and then we have 
$$ u_{-A/2}(\xi)  \geq y(-A/2) \geq 0, $$
but this contradicts to the normalization
$\sup_{B_1}u \leq -1$. 
Therefore, our result follows.

\end{proof}

One simple fact of
the number $M_A(u)$ is that 
it is always positive if we have a singularity of $u$ at the origin.

\begin{lemma}
\label{lem-gc-0015}
Suppose a function $u\in\cF(B_1)$ is equal to $-\infty$ at the origin. 
Then we have $M_A(u) >0$ for all $A>0$. 
\end{lemma}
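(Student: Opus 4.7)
The plan is to argue by contradiction: if $M_A(u) = 0$ for some $A > 0$, I will show that $u$ must be constant along every complex line through the origin on the set $\{t \le -A\}$, which clashes with $u(0) = -\infty$.

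First I would unpack the hypothesis. Since $M_A(u) = \sup_{\z \in \bC\bP^1} \d^+_t u_t(\z)|_{t=-A}$ is a supremum of non-negative quantities (recall that $u|_{\l_\z}$ is a non-decreasing convex function of $t = \log r$, as discussed in Section \ref{sub}), the vanishing of $M_A(u)$ forces $\d^+_t u_t(\z)|_{t=-A} = 0$ for every direction $\z \in \bC\bP^1$. Next, by convexity of $u|_{\l_\z}$ in $t$, its right derivative is non-decreasing in $t$; combined with non-negativity, this pins $\d^+_t u_t(\z) \equiv 0$ on $(-\infty, -A]$ for each $\z$. Consequently $u_t(\z)$ is constant on that interval, equal to the finite value $u_{-A}(\z)$ (finite because $u \in L^\infty_{\mathrm{loc}}(B_1^*)$).

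Now I would invoke the singularity at the origin. Since $u$ is plurisubharmonic, it is upper semicontinuous, and $u(0) = -\infty$ gives $\limsup_{z \to 0} u(z) \le -\infty$, hence $\lim_{z \to 0} u(z) = -\infty$. Fix any $\z \in \bC\bP^1$ and choose a sequence of points on $\l_\z$ tending to the origin; in complex Hopf-coordinates this corresponds to $t \to -\infty$ with $\z$ fixed, so $\lim_{t \to -\infty} u_t(\z) = -\infty$. This directly contradicts the conclusion of the previous paragraph that $u_t(\z)$ is constant (and finite) on $(-\infty, -A]$, and the contradiction yields $M_A(u) > 0$.

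The argument is quite short, with no real obstacle beyond correctly invoking the convexity and monotonicity of $u|_{\l_\z}$ in $t$ established in Section \ref{sub} and the upper semicontinuity of plurisubharmonic functions. The only subtlety worth highlighting is that one needs $u|_{\l_\z}$ to actually be well-defined and finite off the origin of $\l_\z$, which is guaranteed by $u \in \cF(B_1)$ since $u$ is locally bounded on $B_1^*$.
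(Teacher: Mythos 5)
Your proposal is correct and follows essentially the same contradiction argument as the paper: $M_A(u)=0$ combined with convexity and non-negativity of $\d^+_t u_t(\z)$ forces $u_t(\z) = u_{-A}(\z)$ for all $t \le -A$, which then clashes with the singularity at the origin. The only cosmetic difference is that you derive the contradiction along a single fixed direction $\z$ (using finiteness of $u_{-A}(\z)$), whereas the paper uses a sequence $\z_j$ with $u_{-A}(\z_j) \to -\infty$ against the uniform lower bound on $S_{e^{-A}}$; both are equally valid.
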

\begin{proof}
Suppose on the contrary, we have $M_A(u)=0$ for some $A>0$. 
Then for each $\z\in \bC\bP^1$ fixed, 
it follows from the convexity of $u_t$ that we have 
$  u_t (\z) = u_{-A}(\z) $ 
for all $t\leq -A$. 
However, we have the convergence 
$$ \limsup_{z\rightarrow 0} u(z) = -\infty, $$
as a plurisubharmonic function. 
This forces that 
there exists a sequence of points $\z_j \in \bC\bP^1$ 
such that $u_{-A}(\z_j) \rightarrow -\infty$ as $j\rightarrow +\infty$. 
Hence it contradicts to the fact that there is a uniform lower bound of $u$
on the boundary sphere $S_R$ with $R = e^{-A}$.

\end{proof}

As a direct consequence of Lemma (\ref{lem-gc-001}),
the maximal directional Lelong number $\lambda_u (0)$ of a function $u\in \cF(B_1)$ 
at the origin is always finite, 
since it is the decreasing limit of $M_A(u)$ as $A\rightarrow +\infty$. 

\begin{cor}
\label{cor-gc-001}
For a function $u\in \cF(B_1)$, we have 
$$ 0\leq  \lambda_u(0) < +\infty. $$
\end{cor}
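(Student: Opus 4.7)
The plan is to observe that this corollary is essentially an immediate consequence of results already established in the preceding paragraphs, so the proposal amounts to organizing those facts cleanly.

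First, I would establish non-negativity. By the discussion in Section \ref{sub}, for any $u \in \cF(B_1)$ and any $\zeta \in \bC\bP^1$, the restriction $u|_{\l_\zeta}$ is a non-decreasing convex function of $t = \log r$, so its right derivative $\partial_t^+ u_t(\zeta)|_{t=-A}$ is non-negative wherever defined. Taking the supremum over $\zeta$ gives $M_A(u) \geq 0$ for every $A > 0$, and hence $\lambda_u(0) \geq 0$ as a limit of non-negative quantities.

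Second, I would establish finiteness. Fix any $A_0 > 0$. By Lemma \ref{lem-gc-001}, the number $M_{A_0}(u)$ is a finite non-negative real number. As noted immediately after Definition \ref{defn-dn-001}, the monotonicity of $\partial_t^+ u_t(\zeta)$ in $t$ (coming again from convexity of $u|_{\l_\zeta}$ in $t$) implies that $A \mapsto M_A(u)$ is non-increasing. Consequently, for every $A \geq A_0$ we have $M_A(u) \leq M_{A_0}(u) < +\infty$, and so
\[
\lambda_u(0) = \lim_{A \to +\infty} M_A(u) \leq M_{A_0}(u) < +\infty.
\]

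Combining the two bounds yields $0 \leq \lambda_u(0) < +\infty$, as desired. There is no genuine obstacle here: the substantive content is in Lemma \ref{lem-gc-001}, whose proof uses the local lower bound of $u$ on each sphere $S_R$ together with convexity along complex lines to rule out $M_A(u) = +\infty$. The corollary itself is just the passage to the decreasing limit, which is automatic once one notes that a non-increasing sequence bounded below by $0$ and with at least one finite term converges in $[0, +\infty)$.
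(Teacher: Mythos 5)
Your proposal is correct and follows exactly the paper's own reasoning: the paper derives the corollary as a one-line consequence of Lemma \ref{lem-gc-001} together with the observation (stated just after Definition \ref{defn-dn-001}) that $A \mapsto M_A(u)$ is non-negative and non-increasing, so that $\lambda_u(0)$ is the decreasing limit of a sequence of finite non-negative numbers. You have simply spelled out that one-line argument in more detail.
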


Another upshot of Lemma (\ref{lem-gc-001}) is
that $\d_t u$ is in the space $L^{\infty}(B^*_R)$ for all $R\in (0,1)$, 
since we have 
$$ \dot{u}_t (\z) \leq \d_t^+ u_t (\z), $$
whenever the L.H.S. exists. 
Then we are ready to 
 introduce the previous functionals 
on the boundary sphere $S_R$ for almost all $R$. 
To this purpose,  
a few facts of slicing theory (\cite{Dem2}, \cite{Fe}, \cite{Siu74}) will be invoked.  

\subsection{Slicing theory}
The radius function $r: B_1^* \rightarrow (0,1)$ can be viewed as a proper 
smooth submersion of differentiable manifolds. 
In fact, if we identify $B_1^*$ with $S^3\times (0,1)$, 
then $r$ is locally the projection map 
$$   S^3\times (0,1)\rightarrow (0,1) \ \ \  (r, \eta, \z, \bar\z) \rightarrow r. $$
Here we have used the complex Hopf-coordinate on $B_1^*$. 

A function $U\in L^1_{loc}(B_1^*)$ can be viewed as a order zero locally flat current, 
and then we can define its slicing for $r' \in (0,1)$ as 
$$ U_{r'}: = U|_{S_{r'}}.$$
Thanks to Fubini's Theorem, 
this restriction of $U$ to the fibers exists for almost everywhere $r'\in (0,1)$,
and we have $U_{r'}\in L^1(S_{r'})$. 
Moreover, its regularization $U_{\ep}: = U * \rho_{\ep}$ (equation (\ref{rm-006}))
has the property that $U_{\ep, r'} \rightarrow U_{r'}$ in $L^1(S_{r'})$
for almost all $r' \in (0,1)$. 
Furthermore, we have the following basic slicing formula 
 \begin{equation}
\label{gc-006}
\int_{B_1^*} U \a \wedge r^*\b = \int_{r'\in (0,1)} \left( \int_{S_{r'}} U_{r'}(\z, \eta) \alpha|_{S_{r'}}   \right) \beta (r'), 
\end{equation}
for every smooth $3$-form $\alpha$ compactly supported on $B_1^*$, and smooth $1$-form $\b$ on $(0,1)$.


Take a function $u$ in $\cF(B_1)$, 
and then 
it is in the space $L^p_{loc}(B_1)$ for any $p\geq 1$.
Therefore,  its slicing 
$u|_{S_R} $ 
exists as an $L^p$-function on $S_R$ 
for almost everywhere $R\in (0,1)$. 
Moreover, its $t$-derivative 
$\d_t u$ is in $L^{\infty}(B^*_{1-\delta})$ for a small $\delta>0$. 
Hence its slicing 
\begin{equation}
\label{gc-007}
(\d_t u)|_{S_R} = (r \d_r u )|_{S_R} = R (\d_r u)|_{S_R}
\end{equation}
exists as an $L^{\infty}$-function on $S_R$ for almost all $R\in (0, 1-\delta)$. 
Thanks  to the circular symmetry of $u$, 
the slices can also be viewed as $L^p$($L^{\infty}$)-functions on $\bC\bP^1$ via the Hopf-fiberation. 
Then we can rewrite for almost all $R = e^T$ 
\begin{equation}
\label{gc-008}
u_T: = u|_{S_R};\ \ \   \dot{u}_T: = (\d_t u)|_{S_R}. 
\end{equation}
Therefore, it is legal to introduce the following functionals 
$$ \cI(u_t): = \int_{\bC\bP^1} u_t \omega;  \ \ \  I_u(t): = \int_{\bC\bP^1} \dot{u}_t \omega; \ \ \  J_u(t): = \int_{\bC\bP^1} (\dot{u}_t )^2 \omega, $$
for almost all $t\in (-\infty, -1)$.
In the following, we summarize a few basic properties of these functionals: 

\begin{enumerate}
\item[\textbf{(a)}] 
the functional $\cI(u_t)$ is a negative, 
non-decreasing convex function along $t\in (-\infty, -1)$; 

\item[\textbf{(b)}] 
the functionals $I_u$ and $J_u$ are both 
non-negative and non-decreasing $L^{\infty}$-functions along $t\in (-\infty, -1)$; 

\item[\textbf{(c)}] 
the two functionals converge as $t\rightarrow -\infty$, 
$$ I_u(t) \rightarrow \pi \nu_u(0); \ \ \ J_u(t) \rightarrow \pi [\nu_u(0)]^2. $$

\end{enumerate}

Property (a) follows from a basic fact of plurisubharmonic functions,
and part (b) follows since $\dot{u}_t(\z)$ 
is a non-negative and non-decreasing function in $t$
for any fixed $\z\in\bC\bP^1$. 
Finally, property (c) follows from Lemma (\ref{lem-cvx-001}), (\ref{lem-gc-001})
and the dominated convergence theorem, 
exactly as we have used in Lemma (\ref{lem-cvx-002}).

\subsection{Regularization}

Thanks to Corollary (\ref{cor-rm-001}), the standard regularization 
 $u_{\ep}: = u* \rho_{\ep}$ 
 builds a sequence of functions in $\cF^{\infty}(B_1)$ decreasing to $u$. 
 Therefore, 
 its first derivative in the $t$-direction
 \begin{equation}
 \label{gc-0081}
\dot{u}_{\ep, t}: = \d_t u_{\ep} = r\d_r (u * \rho_{\ep})
 \end{equation}
is a non-negative, 
 non-decreasing smooth function along $t\in (-\infty, 0)$.  
Moreover, we will obtain a uniform control on $\dot{u}_{t,\ep}$ as follows. 

It follows from Lemma (\ref{lem-gc-001}) that 
the radial derivative $\d_r u$ is an 
$L^{\infty}_{loc}$-function on $B^*_{1-\delta}$ for any small $\delta>0$.  
Then the standard convolution $ (\d_r u)_{\ep}: = ( \d_r u ) * \rho_{\ep} $
converges to it strongly in $L^p$ for any $p\geq 1$
on a compact subset $K \subset B^*_{1-\delta}$, 
and hence its slicing $(\d_r u)_{\ep}|_{S_R}$
converges to $(\d_r u)|_{S_R}$ strongly in $L^p$ on $S_R$ for almost all $R\in (0, 1-\delta)$. 

However, a subtle issue left to us 
is to compare 
the convolution  
$(\d_r u)_{\ep}$ with $\d_r (u * \rho_{\ep})$.
Then we need the following version of Friedrichs' lemma.  

\begin{lemma}
\label{lem-gc-002}
For a function $u\in \cF(B_1)$ and a point $z\in B^*_{1- 2\delta}$, we have 
\begin{equation}
\label{gc-009}
\left| r \d_r (u * \rho_{\ep}) (z) - r(\d_r u * \rho_{\ep})(z) \right| \leq 2\ep || \nabla u ||_{L^1(B_{1-\delta})},
\end{equation}
for all $\ep < \min\{ |z|, \delta \}$.
\end{lemma}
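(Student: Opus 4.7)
This is a Friedrichs-type commutator estimate quantifying the failure of the Euler operator $r\d_r$ to commute with mollification. The plan is to rewrite $r\d_r$ as the Euler vector field $X := \sum_i x_i \d_{x_i}$ in Cartesian coordinates on $\bR^4 \cong \bC^2$, in which form the commutator computation becomes transparent. Since $\ast \rho_\ep$ commutes with constant-coefficient partials on the $z$-variable,
\begin{equation*}
r\d_r (u \ast \rho_\ep)(z) = \sum_i z_i (\d_i u \ast \rho_\ep)(z) = \int \sum_i z_i\, \d_i u(y)\, \rho_\ep(z-y)\, d\lambda(y),
\end{equation*}
whereas, interpreting $(\d_r u)(y) = \sum_i (y_i/|y|) \d_i u(y)$ pointwise,
\begin{equation*}
r(\d_r u \ast \rho_\ep)(z) = |z| \int \sum_i \frac{y_i}{|y|}\, \d_i u(y)\, \rho_\ep(z-y)\, d\lambda(y).
\end{equation*}
The distributional gradient $\nabla u$ lies in $L^1_{loc}(B_1^*)$ (by standard facts on plurisubharmonic functions), and the support of $\rho_\ep(z - \cdot)$ sits in $B(z,\ep) \subset B_{1-\delta}^*$ under the hypotheses $\ep < |z|$ and $z \in B_{1-2\delta}^*$, so all integrals are well defined.

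The key geometric input is the elementary vector estimate
\begin{equation*}
\bigl|\, z - (|z|/|y|)\, y\,\bigr| \leq 2|z-y|,
\end{equation*}
which I would derive by writing $z - (|z|/|y|)y = (|y|z - |z|y)/|y|$ and using the splitting $|y|z - |z|y = |y|(z-y) + (|y|-|z|)y$ together with $\bigl||y|-|z|\bigr| \leq |z-y|$. Subtracting the two displays above and invoking this inequality, the difference of the two integrals becomes
\begin{equation*}
\int \sum_i\Bigl(z_i - \frac{|z|}{|y|} y_i\Bigr)\, \d_i u(y)\, \rho_\ep(z-y)\, d\lambda(y),
\end{equation*}
whose integrand is dominated in absolute value by $2\,|z-y|\cdot|\nabla u(y)|\,\rho_\ep(z-y) \leq 2\ep\, |\nabla u(y)|\, \rho_\ep(z-y)$ on the support of $\rho_\ep(z-\cdot)$. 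Integrating then yields the claimed bound in terms of $\|\nabla u\|_{L^1(B_{1-\delta})}$, since the support sits inside $B_{1-\delta}$ by the choice of parameters.

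The only real subtlety I expect is the need to treat $\nabla u$ as a distributional object rather than a classical function. The cleanest resolution is to first run the calculation for a smooth plurisubharmonic approximating sequence $u_j \downarrow u$ (for instance by a finer convolution), where everything is classical, verify the inequality there, and then pass to the limit as $j \to \infty$ using $L^1_{loc}$-convergence of $\nabla u_j \to \nabla u$ together with dominated convergence. Everything else is bookkeeping: the heart of the argument is the elementary geometric inequality $|z - (|z|/|y|)y| \leq 2|z-y|$.
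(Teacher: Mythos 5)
Your proof is correct and follows essentially the same route as the paper's: both reduce to bounding the difference of unit radial vectors, which in your notation is the inequality $|z - (|z|/|y|)y| \leq 2|z-y|$ and in the paper's is $|\hat{r}_{\vec{y}} - \hat{r}_{\vec{z}}| \leq 2|z-y|/|z|$ --- the two are identical after multiplying by $|z|$, and the splitting you use ($|y|z - |z|y = |y|(z-y) + (|y|-|z|)y$) is the same algebra as in equation (\ref{gc-013}). Writing $r\d_r$ as the Euler field $\sum_i z_i\d_i$ is a cosmetic repackaging of the paper's integration-by-parts step (\ref{gc-010}), and the final passage from $2\ep\,(|\nabla u|*\rho_\ep)(z)$ to $2\ep\,\|\nabla u\|_{L^1(B_{1-\delta})}$ is stated with the same degree of looseness in both (strictly this is an $L^1$-in-$z$ bound via Young's inequality rather than a pointwise one), so your argument matches the paper exactly.
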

\begin{proof}
First we evaluate 
\begin{eqnarray}
\label{gc-010}
\d_r (u* \rho_{\ep})(z) &=&  \left\langle \   \nabla_{\vec{z}} (u * \rho_{\ep})(z), \hat{r}_{\vec{z}}   \ \right\rangle
\nonumber\\
&=& \left\langle \ep^{-4} \int_{|z-y|< \ep} \nabla_{\vec{z}} \rho\left(  \frac{z -y }{\ep}\right)  u (y) d\lambda(y), \hat{r}_{\vec{z}}   \right\rangle
\nonumber\\
&=&  \left\langle -\ep^{-4} \int_{|z-y|<\ep} \nabla_{\vec{y}} \rho\left(  \frac{z -y }{\ep}\right)  u (y) d\lambda(y), \hat{r}_{\vec{z}}   \right\rangle
\nonumber\\
&=& \left\langle \ep^{-4} \int_{|z-y|< \ep}  \rho\left(  \frac{z -y }{\ep}\right)   \nabla u (y) d\lambda(y), \hat{r}_{\vec{z}}   \right\rangle
\nonumber\\
&=&  \int_{|w|<1}   \left\langle  \nabla u (z - \ep w),  \hat{r}_{\vec{z}}    \right\rangle  \rho\left( w \right)  d\lambda(w ),
\end{eqnarray}
where we used the change of variables $ y: = z - \ep w$. 
On the other hand, we have 
\begin{eqnarray}
\label{gc-011}
 (\d_r u* \rho_{\ep})(z) 
&=&  \ep^{-4} \int_{|z-y|<\ep} \left\langle \nabla u(y), \hat{r}_{\vec{y}}  \right\rangle \rho\left( \frac{z-y}{\ep} \right) d\lambda(y)
\nonumber\\
&=&  \int_{|w|< 1}  \left\langle \nabla u ( z- \ep w), \hat{r}_{ (\vec{z}- \ep \vec{w} )}   \right\rangle \rho(w) d\lambda (w). 
\end{eqnarray}
Take the difference of equation (\ref{gc-010}) and (\ref{gc-011}), and we obtain 
\begin{eqnarray}
\label{gc-012}
&& | (\d_r u* \rho_{\ep})(z) - \d_r (u* \rho_{\ep})(z) |
\nonumber\\
&\leq &  \int_{|w|< 1}  \left|  \left\langle \nabla u ( z- \ep w), \hat{r}_{ (\vec{z}- \ep \vec{w} )} - \hat{r}_{\vec{z}}   \right\rangle  \right| \rho(w) d\lambda (w)
\nonumber\\
&\leq&  C(\ep, z) \int_{|w|< 1}  \left| \nabla u ( z- \ep w)   \right| \rho(w) d\lambda (w)
\nonumber\\
&\leq&  C(\ep, z) || \nabla u ||_{L^1(B_{1-\delta}) }.
\end{eqnarray}
Here we have used the constant 
$$C(\ep, z): = \sup_{|w|<1} | \hat{r}_{ (\vec{z}- \ep \vec{w} )} - \hat{r}_{\vec{z}}  |, $$
and it can be controlled as 
\begin{eqnarray}
\label{gc-013}
&& | \hat{r}_{ (\vec{z}- \ep \vec{w} )} - \hat{r}_{\vec{z}}  | = \left|  \frac{z}{|z|} - \frac{z-\ep w}{|z - \ep w|}\right| 
\nonumber\\
&\leq & \frac{\ep}{|z|}  +  \left|  (z - \ep w) \left( \frac{1}{|z|} - \frac{1}{|z- \ep w|} \right)    \right| 
\nonumber\\
&\leq&   \frac{\ep}{|z|}  +  \frac{ \left|  |z| - |z- \ep w|   \right| }{|z|}   \leq  \frac{2\ep}{|z|},
\end{eqnarray}
Then our result follows. 

\end{proof}

Since the gradient of a function $u\in \cF(B_1)$ is also in $L^2_{loc}(B_1)$, 
Lemma (\ref{lem-gc-002}) implies that 
we can write 
\begin{equation}
\label{gc-014}
 r(\d_r u)_{\ep} (z) =  \dot{u}_{\ep, t}(z) + O(\ep), 
 \end{equation}
 for all $z$
in a punctured smaller ball in $B_1$. 
Hence we have proved the convergence 
\begin{equation}
\label{gc-015}
\dot{u}_{\ep, t} \rightarrow \d_t u 
\end{equation}
strongly in $L^p$ for any $p\geq 1$ on a relatively compact subset $K\subset B^*_1$. 
Then we can infer the following convergence of the functionals. 

\begin{lemma}
\label{lem-gc-003}
For a function $u\in \cF(B_1)$, 
and any constants $B> A >2$, 
there exists a subsequence $u_{\ep_j}$
such that 
we have the convergence 
$$ I_{u_{\ep_j}}(t) \rightarrow I_u(t); \ \ \    J_{u_{\ep_j}}(t) \rightarrow J_u(t),$$
for almost all $t\in [-B, -A]$. 
\end{lemma}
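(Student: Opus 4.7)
The plan is to combine the Friedrichs-type estimate of Lemma (\ref{lem-gc-002}) with a Fubini/slicing argument to pass from $L^p$-convergence of the regularization on an annulus to $L^p$-convergence of the slices on a.e. boundary sphere, and then to read off convergence of $I_u$ and $J_u$.

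First, I fix a closed annulus $K := \{ e^{-B-1} \leq |z| \leq e^{-A+1} \}$, which is relatively compact in $B_1^*$ since $A > 2$. By Lemma (\ref{lem-gc-001}) combined with the convexity of $u|_{\l_\z}$ in $t$, the radial derivative $\d_t u = r\d_r u$ is essentially bounded on $K$ by some constant $M$; in particular $\d_t u \in L^p(K)$ for every $p \in [1,\infty)$.

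Second, I establish the strong $L^p$-convergence $\dot{u}_{\ep,t} \to \d_t u$ on $K$. For $\ep$ small, Lemma (\ref{lem-gc-002}) yields the pointwise bound
\begin{equation*}
| \dot{u}_{\ep,t}(z) - r (\d_r u)_\ep (z) | \leq 2\ep\, \| \nabla u \|_{L^1(B_{1-\delta})},
\end{equation*}
where $(\d_r u)_\ep := (\d_r u) \ast \rho_\ep$. Since $\d_r u \in L^\infty$ of a slightly larger annulus $K'$, standard mollification theory gives $(\d_r u)_\ep \to \d_r u$ strongly in $L^p(K)$ for every $p \geq 1$; multiplying by the smooth bounded factor $r$ then yields $\dot{u}_{\ep,t} \to \d_t u$ strongly in $L^p(K)$.

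Third, I invoke the slicing formula (\ref{gc-006}). Writing the volume element on $K$ in the Hopf-coordinate, Fubini's theorem gives
\begin{equation*}
\int_{-B}^{-A} \| \dot{u}_{\ep,t} - \d_t u \|_{L^p(S_{e^t})}^p\, e^{4t}\, dt \;\leq\; C\, \| \dot{u}_{\ep,t} - \d_t u \|_{L^p(K)}^p \xrightarrow{\ep\to 0} 0.
\end{equation*}
Applied for $p=1$ and $p=2$, a diagonal extraction produces a single subsequence $\ep_j \to 0$ along which, for almost every $t \in [-B,-A]$, both $\| \dot{u}_{\ep_j,t} - \d_t u \|_{L^1(S_{e^t})} \to 0$ and $\| \dot{u}_{\ep_j,t} - \d_t u \|_{L^2(S_{e^t})} \to 0$. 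Since each $u_{\ep_j}$ is $S^1$-invariant by Corollary (\ref{cor-rm-001}), the relevant slices descend through the Hopf fibration to $\bC\bP^1$, and these convergences translate, up to the universal constant from the length of the $S^1$-fiber, into $L^1$ and $L^2$ convergence against the Fubini-Study form $\omega$. The $L^1$ statement gives $I_{u_{\ep_j}}(t) \to I_u(t)$, while the $L^2$ statement gives $J_{u_{\ep_j}}(t) \to J_u(t)$, both for almost every $t \in [-B,-A]$.

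The main obstacle is the second step: the regularized derivative $\dot{u}_{\ep,t} = \d_t(u \ast \rho_\ep)$ is not literally the convolution $(\d_t u)\ast \rho_\ep$, because the radial vector field is non-constant; this mismatch is precisely what Lemma (\ref{lem-gc-002}) controls, with the $O(\ep)$ error coming from the variation of the unit radial direction on scale $\ep$. Once that bridge is crossed, the remaining arguments (mollification, Fubini, diagonal subsequence, descent via $S^1$-invariance) are standard.
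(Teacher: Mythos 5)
Your proof is correct and follows essentially the same route as the paper: use Lemma (\ref{lem-gc-002}) to control the mismatch between $\partial_t(u*\rho_\ep)$ and $r(\partial_r u*\rho_\ep)$, deduce strong $L^p$ convergence on a compact annulus from standard mollification, and then apply the slicing formula (\ref{gc-006}) together with Fubini to extract a subsequence converging slice-wise for a.e.\ $t$, using $S^1$-invariance to descend to $\bC\bP^1$. The only superfluous element is running the argument separately for $p=1$ and $p=2$ with a diagonal extraction: since $\bC\bP^1$ has finite $\omega$-volume, strong $L^2$ convergence of $\dot{u}_{\ep_j,t}$ already yields convergence of both $I_{u_{\ep_j}}(t)$ and $J_{u_{\ep_j}}(t)$, which is the route the paper takes.
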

\begin{proof}
Let $\{\rho_j \}_{j=1}^k$ be a partition of unity of $\bC\bP^1$, 
and $\chi(r)$ a smooth cut-off function on the unit interval 
that $\chi(r) =1$ for all $r\in (e^{-B}, e^{-A})$ 
and $\chi(r) = 0$ outside of the interval $(e^{-B-1}, e^{-A +1})$. 
Then we introduce the following compactly supported $3$-forms on $B_1^*$ as 
$$  \a_j: = \chi(r) \rho_j(\z) d\sigma_3, $$
where $d\sigma_3$ is the area form of the unit $3$-sphere. 
Take the $1$-form as 
$$\b: = r^{-1}dr, $$
and then 
the basic slicing formula (equation (\ref{gc-006})) implies the following equality for $r = e^t$
\begin{equation}
\label{gc-016}
\int_{B_1} | \dot{u}_{\ep, t} - \d_t u |^2 \a_j \wedge \b = \int_{0}^{1} \left( \int_{S_r} | \dot{u}_{\ep, t} - \dot{u}_t |^2 \rho_j (\z) d\sigma_3 \right) \chi(r) r^{-1}dr.
\end{equation}
Summing up with $j$, we obtain 
\begin{equation}
\label{gc-017}
\int_{B_1} | \dot{u}_{\ep, t} - \d_t u |^2 \chi(r) d\lambda_t=  2\pi \int_{-\infty}^{0} \left( \int_{\bC\bP^1} | \dot{u}_{\ep, t} - \dot{u}_t |^2 \omega \right) \tilde\chi(t) dt, 
\end{equation}
where $d\lambda_t: = r^{-1} dr d\sigma_3$ is a measure on $\bC^2 - \{ 0 \}$, 
and $\tilde{\chi}(t): = \chi(e^t)$ is a cut-off function on $(-\infty, 0)$ 
that supports on $(-B-1, -A+1)$. 

From equation (\ref{gc-015}),
we infer that the L.H.S. of equation (\ref{gc-017}) 
converges to zero as $\ep\rightarrow 0$. 
Then it follows that we have 
$$ \dot{u}_{\ep, t} \rightarrow \dot{u}_t $$
strongly in $L^2$ on the fiber $\bC\bP^1\times \{ t \}$
for almost all $t\in [-B, -A]$, 
possibly after passing to a subsequence,
and then our result follows. 

\end{proof}

\subsection{Apriori estimates}

The following idea is to utilize the regularization  
$u_{\ep}\in \cF^{\infty}(B_1)$ to approximate a function $u\in \cF(B_1)$, 
and then apply our decomposition formula (equation (\ref{dn-001}))
 to this sequence to derive a contradiction. 
To this purpose, we first need a uniform estimate on the 
maximal directional Lelong numbers of the regularization. 

 \begin{lemma}
 \label{lem-app-001}
 Fix any two constants $ B> A  > 1 $. 
 Then there exists a uniform constant $C>0$
 such that we have 
  \begin{equation}
 \label{app-001}
M_B (u_{\ep}) \leq 2M_A (u) + C\ep,
 \end{equation}
 for all $\ep < \ep_0$, where 
 $$  \ep_0: = \frac{1}{2}\min \left\{ (e^{-A} - e^{-B}), e^{-B} \right\}. $$ 
 \end{lemma}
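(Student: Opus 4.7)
The plan is to use Friedrichs' lemma (Lemma \ref{lem-gc-002}) to move the radial derivative inside the mollifier, and then exploit the monotonicity in $t = \log r$ of the right derivative $\d_t^+ u_t(\z)$ to bound the resulting convolved derivative by $M_A(u)$ up to a harmless geometric factor. Fix $\delta>0$ small enough that any $z$ with $|z|=e^{-B}$ lies in $B^*_{1-2\delta}$; since $B>1$, the choice $\delta = \tfrac14(1-e^{-1})$ suffices. For $\ep<\ep_0\leq\tfrac12 e^{-B}$ we have $\ep<\min\{|z|,\delta\}$, so Lemma \ref{lem-gc-002} yields
\begin{equation*}
\dot{u}_{\ep,-B}(z) = r\,\d_r(u*\rho_\ep)(z) \leq r\,(\d_r u *\rho_\ep)(z) + C\ep,
\end{equation*}
where $r = |z| = e^{-B}$ and $C := 2\|\nabla u\|_{L^1(B_{1-\delta})}$ is independent of $\ep$ and $\z$.

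For the main term, I change variable to $y = z - \ep w$. The two halves of the definition of $\ep_0$ are exactly what is needed to force $|y| \leq e^{-B}+\ep < e^{-A}$ and $|y| \geq e^{-B}-\ep \geq \tfrac12 e^{-B}$ for every $w$ in the support of $\rho$. Along the complex line $\ell_{\z_y}$ through $y$ and the origin, the $S^1$-invariant function $u|_{\ell_{\z_y}}$ is a non-decreasing convex function of $t = \log r$, so its right derivative $\d_t^+ u_t(\z_y)$ is non-decreasing in $t$. Combined with $\log|y| < -A$, this gives
\begin{equation*}
\d_r u(y) = |y|^{-1}\d_t u(y) \leq |y|^{-1}\d_t^+ u_t(\z_y)|_{t=-A} \leq |y|^{-1} M_A(u)
\end{equation*}
at almost every $y$, using that one-variable convex functions are differentiable off a countable set, together with Fubini on the real $3$-sphere fibres.

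Since $e^{-B}|y|^{-1} \leq 2$ uniformly on the support and $\int\rho\,d\lambda = 1$, the convolution integral is bounded by
\begin{equation*}
r(\d_r u *\rho_\ep)(z) \leq 2 M_A(u).
\end{equation*}
Combining with the Friedrichs error and taking the supremum over $z \in S_{e^{-B}}$ (which, by the $S^1$-invariance of $u_\ep$ from Corollary \ref{cor-rm-001}, is the same as taking the sup over $\z \in \bC\bP^1$) produces the asserted bound $M_B(u_\ep) \leq 2 M_A(u) + C\ep$.

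The principal obstacle is coordinating two regularization errors at once. The convolution smears evaluation over a ball of radius $\ep$ around $z$, which creates both a mismatch between $\d_r u_\ep$ and $(\d_r u)_\ep$, and a spread of the integrand over directions $\z_y \neq \z$ and radii $|y| \neq e^{-B}$. Friedrichs' lemma absorbs the first error into the uniform $L^1$-bound on $\nabla u$, while the monotonicity of $\d_t^+ u_t(\z)$ in $t$ absorbs the second; the threshold $\ep < \ep_0$ is calibrated precisely so that the convolution support remains inside the annulus $\tfrac12 e^{-B} \leq |y| < e^{-A}$, where this monotonicity produces a clean pointwise domination by $M_A(u)$.
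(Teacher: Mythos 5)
Your proof is correct and follows essentially the same route as the paper: apply the Friedrichs-type Lemma~(\ref{lem-gc-002}) to trade $\d_r(u*\rho_\ep)$ for $(\d_r u)*\rho_\ep$ at the cost of $C\ep$, then use the $t$-monotonicity of $\d_t^+ u_t(\z)$ along each line through the origin to dominate $(r\d_r)u$ at any point of the $\ep$-ball around $z\in S_{e^{-B}}$ by $M_A(u)$, with the geometric factor $|z|/|z-\ep w|\leq 2$ absorbing the radius mismatch — exactly the paper's computation in equations~(\ref{app-002})--(\ref{app-003}). Your minor added care (verifying $\delta=\tfrac14(1-e^{-1})$ works, noting differentiability of convex functions off a countable set, and working with the right derivative $\d_t^+$ consistent with Definition~(\ref{defn-dn-001})) only tightens the exposition; for the record, $\ep_0\leq\tfrac14 e^{-A}<\tfrac14 e^{-1}<\delta$ so the hypothesis $\ep<\delta$ of Lemma~(\ref{lem-gc-002}) is indeed met.
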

 \begin{proof}
The maximal directional Lelong number of the regularization can be written as 
 $$ M_B(u_{\ep}) = \sup_{\z\in \bC\bP^1} (\d_t u_{\ep, t} )(\z)|_{t = -B}.$$
Thanks to Lemma (\ref{lem-gc-002}), it boils down to estimate 
the partial derivatives 
$r ( \d_r u)_{\ep}(z)$ on the boundary sphere $S_R$ with $R = e^{-B}$.

Hence we take a point $z\in S_R$, and compute for all $\ep < \ep_0$ 
 \begin{eqnarray}
\label{app-002}
r ( \d_r u)_{\ep}(z): &=& r( \d_r u \ast \rho_{\ep})(z) 
\nonumber\\
&=&  \int_{|w|\leq 1} \frac{|z|}{|z - \ep w|}  (r\d_r) u(z- \ep w) \rho(w) d\lambda(w)
\nonumber\\
&=&  \int_{|w|\leq 1} \frac{|z|}{|z - \ep w|}   \dot{u}_t(\z) \rho(w) d\lambda(w), 
\end{eqnarray}
where $t= \log|z-\ep w| < -A$ and 
$\z\in \bC\bP^1$ is the angle corresponding to the point $z-\ep w$. 
Then consider the restriction of $u$ to the complex line $\l_{\z}$ through the origin. 
By utilizing the monotonicity of $\dot{u}_t(\z)$, 
we have 
$$ 0\leq \dot{u}_t (\z) \leq M_A (u), $$
for all $\z\in \bC\bP^1$.
Therefore, we further have 
 \begin{eqnarray}
\label{app-003}
r ( \d_r u)_{\ep}(z)
&\leq &  M_A (u) \int_{|w|\leq 1} \frac{|z|}{|z - \ep w|}  \rho(w) d\lambda(w)
\nonumber\\
&\leq &  (1+ 2\ep e^{-B}) M_A(u) \leq 2 M_A(u),
\end{eqnarray}
due to our choice of small $\ep$. 
Finally, our result follows from Lemma (\ref{lem-gc-002}).


 \end{proof}

\begin{rem}
\label{rem-app-001}
If we choose a smaller $\ep_0$ in Lemma (\ref{lem-app-001}),
then it is possible to obtain a shaper estimate. 
For instance, take an arbitrary real number $\b\in (0,1)$ and 
\begin{equation}
\label{app-0031}
 \ep_0: =  \min\left\{ \frac{1}{2} (e^{-A} - e^{-B}), \frac{\beta e^{-B}}{1+\b} \right\}.
 \end{equation}
 Then we have the following estimate for all $\ep < \ep_0$
 \begin{equation}
\label{app-0032}
 M_B (u_{\ep}) \leq (1+\b)M_A(u)+ C\ep,
 \end{equation}
 with the same constant $C$. 
 
\end{rem}

We have seen a useful estimate (equation (\ref{dn-001})),
based on the decomposition formula
of the Monge-Amp\`ere mass.
For a function $u\in \cF^{\infty}(B_1)$
and a constant $A>0$ with $M_A(u)>0$, 
we can re-write it as follows. 
\begin{equation}
\label{app-004}
I'_u(t) \geq   \frac{1}{M_A (u)} \left\{ \pi^{-1} \mbox{MA}(u)(B_r) - J_u(t) \right\}    - 2 I_u(t),
\end{equation}
for all $t = \log r \in (-\infty, -A]$. 
This new version of the estimate will be substantial
 for the following argument on the zero mass conjecture in the general case. 

\begin{theorem}
\label{thm-002}
For any $u\in \cF(B_1)$, 
its residual Monge-Amp\`ere mass $\tau_u (0) $ is zero, if its Lelong number $\nu_u(0)$ is zero at the origin. 
\end{theorem}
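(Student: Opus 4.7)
I would reduce to the smooth case via the regularization $u_\ep := u \ast \rho_\ep \in \cF^{\infty}(B_1)$ (Corollary \ref{cor-rm-001}) and pass to the limit in the a priori estimate of Lemma \ref{lem-dn-000}. Fix $A>1$ large enough that Lemma \ref{lem-gc-001} gives $M_A(u) < \infty$, and pick any $B > A$. Lemma \ref{lem-app-001} then yields a uniform control $M_B(u_\ep) \leq K := 2M_A(u)+1$ for all $\ep < \ep_0$, so that Lemma \ref{lem-dn-000} specializes to
\begin{equation*}
\pi^{-1}\,\mbox{MA}(u_\ep)(B_{e^t}) \;\leq\; K\bigl[I'_{u_\ep}(t) + 2I_{u_\ep}(t)\bigr] + J_{u_\ep}(t), \qquad t \leq -B.
\end{equation*}

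The one term that cannot be limited pointwise is the derivative $I'_{u_\ep}(t)$, since the functional $I_u$ is only non-decreasing and BV (no analogue of Proposition \ref{prop-lap-001} is directly available). To bypass this, I would integrate the inequality over a short slab $t \in [t_0-1, t_0]$ with $t_0 < -B$. The derivative telescopes into $I_{u_\ep}(t_0) - I_{u_\ep}(t_0-1)$, and the monotonicity of $r \mapsto \mbox{MA}(u_\ep)(B_r)$ bounds the left-hand side from below by $\mbox{MA}(u_\ep)(B_{e^{t_0-1}})$, giving
\begin{equation*}
\pi^{-1}\mbox{MA}(u_\ep)(B_{e^{t_0-1}}) \leq K\Bigl[I_{u_\ep}(t_0)-I_{u_\ep}(t_0-1) + 2\!\int_{t_0-1}^{t_0}\! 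I_{u_\ep}(t)\,dt\Bigr] + \int_{t_0-1}^{t_0}\! J_{u_\ep}(t)\,dt.
\end{equation*}

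Next I would send $\ep \to 0$ along a subsequence. Since $\mbox{MA}(u)$ is a finite Borel measure, the set of radii $r$ with $\mbox{MA}(u)(S_r) > 0$ is countable; for any $t_0$ avoiding this countable set, Remark \ref{rem-rm-001} (weak convergence of $(dd^c u_\ep)^2 \to (dd^c u)^2$) yields $\mbox{MA}(u_\ep)(B_{e^{t_0-1}}) \to \mbox{MA}(u)(B_{e^{t_0-1}})$. Lemma \ref{lem-gc-003} supplies a further subsequence along which $I_{u_\ep}(t) \to I_u(t)$ and $J_{u_\ep}(t) \to J_u(t)$ for a.e.\ $t \in [t_0-1, t_0]$; the two integrals then pass by dominated convergence, using the uniform bound $\dot u_{\ep,t} \leq M_B(u_\ep) \leq K$. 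The resulting inequality reads
\begin{equation*}
\pi^{-1}\mbox{MA}(u)(B_{e^{t_0-1}}) \leq K\Bigl[I_u(t_0)-I_u(t_0-1) + 2\!\int_{t_0-1}^{t_0}\! I_u(t)\,dt\Bigr] + \int_{t_0-1}^{t_0}\! J_u(t)\,dt.
\end{equation*}

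To conclude, the hypothesis $\nu_u(0)=0$ together with property (c) of the functionals forces $I_u(t) \to 0$ and $J_u(t) \to 0$ as $t \to -\infty$, so the right-hand side vanishes as $t_0 \to -\infty$ along the full-measure set of admissible times. By monotonicity of $r \mapsto \mbox{MA}(u)(B_r)$ this gives $\tau_u(0) = \pi^{-2}\lim_{r\to 0^+}\mbox{MA}(u)(B_r) = 0$. The main obstacle is the first one identified above: the absence of pointwise control on $I'_{u_\ep}$ as $\ep \to 0$. The integration trick trades differentiation for telescoping, and the two delicate ingredients making this work are precisely the uniform Lelong-direction bound of Lemma \ref{lem-app-001} and the sliced $L^2$-convergence of Lemma \ref{lem-gc-003}, both established earlier in this section.
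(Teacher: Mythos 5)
Your argument is correct and reaches the conclusion, but it is organized quite differently from the paper's own proof. Both proofs start from the same a priori estimate (Lemma \ref{lem-dn-000}, applied to the regularization $u_\ep$ with the uniform directional bound supplied by Lemma \ref{lem-app-001}) and both recognize that the problematic term is $I'_{u_\ep}(t)$, which has no pointwise control in the limit $\ep\to 0$; both therefore deal with it by integrating in $t$, i.e.\ by the fundamental theorem of calculus. Where you diverge is the logic of the argument. The paper runs a proof by contradiction: assuming $\tau_u(0)>0$ it fixes a $\delta>0$, sets up quantitative thresholds (conditions (i)--(iv) involving a long window $[-2A,-A]$ with $A$ large), derives a pointwise lower bound $I'_{u_\ep}(t)\geq 4\delta$ on that window, and integrates over its full length $A>4$ to force $I_{u_\ep}(-A)\geq 16\delta$, contradicting the smallness of $I_{u_\ep}$. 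You instead argue directly: integrate the inequality over a fixed-length window $[t_0-1,t_0]$, telescope the derivative into $I_{u_\ep}(t_0)-I_{u_\ep}(t_0-1)$, pass to the limit in $\ep$ and then $t_0\to-\infty$, and read off $\tau_u(0)=0$ from the monotonicity of $r\mapsto\mathrm{MA}(u)(B_r)$ and property (c) of the functionals. Your route is shorter and avoids the bookkeeping of $\delta$, the auxiliary radius $-2A$, and the positivity claim $M_B(u_\ep)>0$; it also produces an explicit limit inequality rather than a reductio. The paper's contradiction structure, on the other hand, is what it reuses essentially verbatim to prove the quantitative Theorem \ref{thm-pln-001} where $\nu_u(0)>0$, so the slightly heavier setup is amortized over the next result.

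Two small points worth polishing in your write-up. First, you invoke the Portemanteau argument (countability of mass-carrying spheres) to pass $\mathrm{MA}(u_\ep)(B_{e^{t_0-1}})\to\mathrm{MA}(u)(B_{e^{t_0-1}})$, but since this term is already a lower bound on the left side you only need the one-sided inequality $\mathrm{MA}(u)(B_R)\leq\liminf_{\ep}\mathrm{MA}(u_\ep)(B_R)$ of equation (\ref{rm-004}), valid for every $R$; the countability restriction can be dropped. Second, Lemma \ref{lem-gc-003} gives a.e.\ convergence of $I_{u_{\ep_j}}$ and $J_{u_{\ep_j}}$ on a compact $t$-interval for a subsequence possibly depending on that interval, so the endpoints $t_0$ and $t_0-1$ must be perturbed slightly to land in the full-measure set (exactly as the paper does when it remarks that ``$A$ may be perturbed to a slightly larger one''); as stated, your phrasing leaves the dependence of the subsequence on $t_0$ implicit. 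Neither issue affects the validity of the approach.
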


As in Theorem (\ref{thm-001}), 
a contradiction argument will be applied. 
In this case, we can assume $u(0) = -\infty$,
and then Lemma (\ref{lem-gc-0015}) implies that we have 
$M_A(u) >0$ for all $A>0$. 
Therefore, there exists a small number $\delta>0$ such that 
we can make the assumption 
\begin{equation}
\label{app-005}
\pi^{-1} \mbox{MA}(u)(B_r) \geq 20(2M_2(u) +1)\delta,
\end{equation}
for all $r\in (0,1)$. 

The \textbf{first step} is to 
fix a large constant $A>4$ 
such that the following condition holds: 

\begin{enumerate}
\item[\textbf{(i)}]
for almost all $t\leq -A$, we have 
$ 0 \leq I_u( t) \leq \delta$ and $ 0\leq J_u(t) \leq \delta $. 
\end{enumerate}
This follows from the fact $\nu_u(0) = 0$ and the 
properties (b) and (c)
of the functionals. 
Moreover, this condition will be kept for any larger constant. 

The \textbf{second step} is to pick up a subsequence $u_{\ep}$
of the regularization and a small $\ep_1>0$
such that  the following estimates hold
for all $\ep < \ep_1$: 
\begin{enumerate}
\item[\textbf{(ii)}]
we have at $t = -A$
$$ 0\leq I_{u_{\ep}}(-A)  \leq 2\delta, \ \ \ 0\leq J_{u_{\ep}}(-A)  \leq 2\delta;$$

\item[\textbf{(iii)}]
we have at $R = e^{-2A}$
$$ \pi^{-1} \mbox{MA}(u_{\ep})(B_R) \geq 10(2M_2(u) +1)\delta;$$

\item[\textbf{(iv)}]
for any $B\in [A, 2A]$, we have 
$$ M_B (u_{\ep}) \leq 2 M_2(u)+1. $$
\end{enumerate}

Condition-(ii) follows from 
Lemma (\ref{lem-gc-003}) and condition-(i),
since we have the convergence 
$$ I_{u_{\ep}}(t) \rightarrow I_{u}(t),\ \ \   J_{u_{\ep}}(t) \rightarrow J_{u}(t),$$
for almost all $t\in[-2A, A]$, possibly after passing to a subsequence. 
Here the constant $A$ may be perturbed to a slightly larger one, 
but this will not affect our condition-(i).

Condition-(iii) is implied by equation (\ref{app-005})
and our previous estimate (equation (\ref{rm-004})). 
Finally condition-(iv) follows from Lemma (\ref{lem-app-001}), 
by taking $\ep_1 < \min\{\ep_0, \frac{1}{C}\}$, where 
$C$ is the uniform constant appearing in the R.H.S. of equation (\ref{app-001}), and 
$$ \ep_0: = \frac{1}{2} \min \left\{  e^{-2} - e^{-4}, e^{-2A} \right\}. $$

Equipped with the above conditions, we continue the argument as follows. 

\begin{proof}
[Proof of Theorem(\ref{thm-002})]
Suppose on the contrary 
that $\tau_u(0)$ is positive. 
Then we can pick up a real number $\delta >0$ small enough 
such that equation (\ref{app-005}) holds for all $r\in(0,1)$. 
Choose a large constant $A$, 
and take a subsequence $u_{\ep}$ of the regularization
with $\ep< \ep_1$ such that condition (i)-(iv) are all satisfied.

Fix an $\ep< \ep_1$,
we will focus on the $I_{u_{\ep}}$-functional
in the closed interval $[-2A, A]$. 
First we claim $M_B(u_{\ep}) >0$ for all $B\in [A, 2A]$ under our choices,
and the reason is as follows. 

It is deduced from conditions (ii)-(iii) that we have for any $t\in [-2A, -A]$
\begin{eqnarray}
\label{app-006}
&& \pi^{-1} \mbox{MA}(u_{\ep})(B_{r} ) - J_{u_{\ep}}(t) 
\nonumber\\
&\geq&  \pi^{-1} \mbox{MA}(u_{\ep})(B_R) - J_{u_{\ep}}(-A) 
\nonumber\\
&\geq& 8 (2M_2(u) +1) \delta >0,
 \end{eqnarray}
where $r= e^{t}$ and $R = e^{-2A}$. 
However, 
this contradicts to the inequality in equation (\ref{dn-001}), 
if we have $M_B(u)=0$ for any $B\in [ A, -t]$. 
Then our claim follows. 

Thanks to the estimate in equation (\ref{app-004}),
 the first derivative of the $I_{u_{\ep}}$-functional can be estimated from below 
 as 

 \begin{eqnarray}
\label{app-007}
I'_{u_{\ep}}(t) &\geq&   \frac{1}{M_A (u_{\ep})} \left\{ 8(2M_2(u)+1)\delta \right\}    - 2 I_{u_{\ep}}(-A)
\nonumber\\
&\geq& \frac{1}{2 M_2(u) +1} \left\{ 8(2M_2(u)+1)\delta \right\}    -  4\delta 
\nonumber\\
&\geq& 4\delta,
\end{eqnarray}
for all $t\in [-2A, -A]$.
Here the first line on the R.H.S. of equation (\ref{app-007}) follows from equation (\ref{app-006}),
and the second line follows from condition-(iv). 
However, it implies 
\begin{equation}
\label{app-008}
I_{u_{\ep}} (-A) \geq \int_{-2A}^{A} I'_{u_{\ep}} (t) dt \geq 16 \delta, 
\end{equation}
which contradicts to the assumption on the $I_{u_{\ep}}$-functional in condition-(ii).
Then our result follows.

\end{proof}

\subsection{Positive Lelong numbers}
\label{sec02}
We note that the zero Lelong number condition 
does not really used in the apriori estimates during the proof of 
the Theorem (\ref{thm-002}). 
In fact, we can obtain the same estimate (equation (\ref{dn-002}))
in Theorem (\ref{thm-dn-001}) for a function $u$ in $\cF(B_1)$.
In other words, 
its residual Monge-Amp\`ere mass $\tau_u(0)$ 
can be controlled by the maximal directional Lelong number $\lambda_u(0)$
and Lelong number $\nu_u(0)$ 
at the origin.

\begin{theorem}
\label{thm-pln-001}
For a function $u\in\cF(B_1)$, we have 
\begin{equation}
\label{app-009}
\t_{u}(0) \leq 2\lambda_{u}(0) \cdot \nu_u(0) + [\nu_u(0)]^2.  
\end{equation}
\end{theorem}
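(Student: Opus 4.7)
The plan is to apply the smooth estimate of Lemma~(\ref{lem-dn-000}) to the standard regularization $u_\ep\in\cF^\infty(B_1)$, \emph{integrated in $t$}, and then pass to the limit $\ep\to 0$. A direct application of Theorem~(\ref{thm-dn-001}) to $u_\ep$ yields no information on the original $u$, because each $u_\ep$ is smooth and therefore has $\nu_{u_\ep}(0)=\tau_{u_\ep}(0)=0$; one must instead exploit the pointwise estimate (\ref{dn-001}) to isolate the behavior in $t$ before the singularity gets smoothed out.

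Fix $1<A<B<B'$ and a small $\b\in(0,1)$. Lemma~(\ref{lem-dn-000}) applied to $u_\ep$ at distance $B$ gives
$$\pi^{-1}\mbox{MA}(u_\ep)(B_{e^t})\leq M_B(u_\ep)\bigl\{I'_{u_\ep}(t)+2I_{u_\ep}(t)\bigr\}+J_{u_\ep}(t),\qquad t\leq -B,$$
and integrating in $t$ over $[-B',-B]$, using $\int_{-B'}^{-B}I'_{u_\ep}\,dt=I_{u_\ep}(-B)-I_{u_\ep}(-B')$, produces
$$\int_{-B'}^{-B}\pi^{-1}\mbox{MA}(u_\ep)(B_{e^t})\,dt\leq M_B(u_\ep)\Bigl[I_{u_\ep}(-B)-I_{u_\ep}(-B')+2\!\int_{-B'}^{-B}\!I_{u_\ep}\,dt\Bigr]+\!\int_{-B'}^{-B}\!J_{u_\ep}\,dt.$$

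Passing $\ep\to 0$ along a subsequence, the left-hand side rewrites via Fubini as $\int_{B_1}\mu(|z|)\,d\mbox{MA}(u_\ep)(z)$, where $\mu(r)=\min\{B'-B,\max(-\log r-B,0)\}$ is continuous and compactly supported in $\overline{B_{e^{-B}}}\subset B_1$; it therefore converges to the analogous integral against $\mbox{MA}(u)$ by weak convergence of Monge--Amp\`ere measures (cf.~\cite{Dem93}). For the right-hand side, Remark~(\ref{rem-app-001}) gives $M_B(u_\ep)\leq(1+\b)M_A(u)+C\ep$ for $\ep$ small (with $B/A$ adjusted to $\b$), which uniformly dominates $I_{u_\ep}$ and $J_{u_\ep}$ on $[-B',-B]$ via the inequality $I_{u_\ep}(t),\,J_{u_\ep}(t)^{1/2}\leq\pi M_B(u_\ep)$; after perturbing $B,B'$ to almost-every values and diagonalizing, Lemma~(\ref{lem-gc-003}) supplies $I_{u_\ep}(-B)\to I_u(-B)$, $I_{u_\ep}(-B')\to I_u(-B')$, together with convergence of the $t$-integrals by dominated convergence. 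The limit inequality is identical with $u$ replacing $u_\ep$ and $(1+\b)M_A(u)$ replacing $M_B(u_\ep)$.

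Finally I divide by $B'-B$ and send $B'\to\infty$. Monotonicity of $\mbox{MA}(u)(B_r)$ bounds the left-hand side below by $\pi(B'-B)\tau_u(0)$, while property (c) of the functionals yields $I_u(-B')\to\pi\nu_u(0)$ and the asymptotics $\int_{-B'}^{-B}I_u\,dt=\pi\nu_u(0)(B'-B)+o(B'-B)$, $\int_{-B'}^{-B}J_u\,dt=\pi\nu_u(0)^2(B'-B)+o(B'-B)$. Dividing and taking $B'\to\infty$ yields $\tau_u(0)\leq 2(1+\b)M_A(u)\nu_u(0)+\nu_u(0)^2$, and sending $\b\to 0$ and $A\to\infty$ (so $M_A(u)\searrow\lambda_u(0)<\infty$ by Corollary~(\ref{cor-gc-001})) completes the proof. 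The main obstacle is coordinating these three limits simultaneously: uniform-in-$\ep$ domination of $I_{u_\ep},J_{u_\ep}$ (furnished by the bound on $M_B(u_\ep)$), pointwise slice convergence at the chosen endpoints (available only at a.e.\ $B,B'$, resolved by a diagonal subsequence), and weak convergence of $\mbox{MA}(u_\ep)$ tested against the non-smooth but continuous cutoff $\mu$, are each handled by the technical apparatus already developed in the paper but must be combined carefully.
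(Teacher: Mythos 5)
Your proposal is correct, and it replaces the paper's contradiction argument with a direct one. The paper assumes the inequality fails by a margin $\delta>0$, sets up conditions (v)--(viii), derives a uniform lower bound $I'_{u_\ep}(t)\gtrsim\delta$ on $[-2A,-A]$, and integrates this over an interval of length $A$ to overshoot the bound on $I_{u_\ep}(-A)$ from condition (vi). You instead integrate the raw estimate (\ref{dn-001}) for $u_\ep$ over $[-B',-B]$, pass to the limit $\ep\to0$ term by term, and then perform a Ces\`aro average in $B'\to\infty$ to wash out the boundary term $I_u(-B)-I_u(-B')$ and make the averages of $I_u,J_u$ collapse to $\pi\nu_u(0),\pi[\nu_u(0)]^2$. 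Both proofs hinge on exactly the same technical apparatus --- the decomposition estimate of Lemma~(\ref{lem-dn-000}), the uniform control $M_B(u_\ep)\leq(1+\b)M_A(u)+C\ep$ from Remark~(\ref{rem-app-001}), the slice convergence $I_{u_\ep}\to I_u$, $J_{u_\ep}\to J_u$ at a.e.\ $t$ from Lemma~(\ref{lem-gc-003}), and the asymptotics of $I_u,J_u$ from property (c) --- but the architecture differs: the paper localizes the argument to a fixed interval $[-2A,-A]$ and extracts a contradiction from the $\delta$-gap, while you globalize over $B'\to\infty$ and let the averaging do the work. Your route buys a cleaner logical structure with no $\delta$- or $\k$-chasing, at the cost of needing to invoke weak convergence of $(dd^c u_\ep)^2$ tested against the continuous cutoff $\mu$ (which is legitimate by the usual density argument for positive Radon measures with locally bounded mass, combined with the Fubini rewriting you give); the paper avoids this by comparing $\mbox{MA}(u_\ep)(B_R)$ directly to $\mbox{MA}(u)(B_R)$ at a single radius via the one-sided inequality (\ref{rm-004}). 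Your opening observation is also well taken: applying Theorem~(\ref{thm-dn-001}) to the smooth $u_\ep$ directly gives $0\leq 0$, so one must use the pointwise estimate (\ref{dn-001}) before the regularization erases the singularity.
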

\begin{proof}
First we note that 
it is enough to prove the inequality 
\begin{equation}
\label{app-010}
\t_{u}(0) \leq 2(1+\b) M_{A_0}(u) \cdot \nu_u(0) + [\nu_u(0)]^2,
\end{equation}
for all $A_0> 2$ large and $\b\in (0,1)$ small.
Moreover, we can also assume $u(0)= - \infty$,
and then $M_A(u)$ is always positive for all $A>0$.

Suppose on the contrary that equation (\ref{app-010}) fails. 
Then there exists a small number $\delta >0$ such that we have 
\begin{eqnarray}
\label{app-011}
&&\pi^{-1} \mbox{MA}(u)(B_r) 
\nonumber\\
&\geq&  2(1+\b) M_{A_0}(u) \cdot \pi \nu_u(0) + \pi [\nu_u(0)]^2+ 20(2M_{A_0}(u) +1)\delta,
\end{eqnarray}
for all $r\in (0,1)$. 
Next,  
we can pick up a large constant $A > 2A_0$,
and withdraw a subsequence $u_{\ep}$ with $\ep< \ep_1$
such that the following conditions are satisfied:

\begin{enumerate}
\item[\textbf{(v)}]
for almost all $t\leq -A$, we have 
$$ \pi\nu_u(0) \leq I_u( t) \leq  \pi\nu_{u}(0)+ \delta, \ \ \  \pi[\nu_u(0)]^2 \leq J_u( t) \leq  \pi[\nu_{u}(0)]^2+ \delta; $$

\item[\textbf{(vi)}]
we have at $t = -A$ and $t= -2A$
$$ \pi\nu_u(0) - \delta \leq I_{u_{\ep}}(-2A) \leq I_{u_{\ep}}(-A)  \leq \pi\nu_u(0)+ 2\delta$$
$$  \pi[\nu_u(0)]^2- \delta \leq J_{u_{\ep}}(-A)  \leq  \pi[\nu_u(0)]^2+ 2\delta;$$

\item[\textbf{(vii)}]
we have at $R = e^{-2A}$
\begin{eqnarray}
&&\pi^{-1} \mbox{MA}(u_{\ep})(B_R) 
\nonumber\\
&\geq&  2(1+\b) M_{A_0}(u) \cdot \pi \nu_u(0) + \pi [\nu_u(0)]^2+ 10(2M_{A_0}(u) +1)\delta;
\end{eqnarray}

\item[\textbf{(viii)}]
for any $B\in [A, 2A]$, we have for a small number $\k>0$
$$ M_B (u_{\ep}) \leq (1+\b) M_{A_0}(u)+ \k. $$
\end{enumerate}

As before, 
condition (v)-(vii) are satisfied 
due to Lemma (\ref{lem-gc-003})
and properties of $I_u$ and $J_u$ functionals. 
The points $t= -A$ and $t= -2A$ may also be slightly perturbed 
to satisfy condition-(vi). 
Moreover, condition-(viii) 
follows from Lemma (\ref{lem-app-001}) 
and remark (\ref{rem-app-001}), 
since we can take $\ep_1<\ep_0$ where 
$$ \ep_0: = \min\left\{ \frac{1}{2}(e^{-A_0} - e^{-2A_0}), \ \frac{\b e^{-2A}}{1+\b},\ \k C^{-1}\right\}.  $$

Equipped with these conditions (v)-(viii), 
and then we have the following estimate for all $t\in [-2A, -A]$

\begin{eqnarray}
\label{app-012}
&& \pi^{-1} \mbox{MA}(u_{\ep})(B_r) - J_{u_{\ep}}(t)
\nonumber\\
 &\geq& \pi^{-1} \mbox{MA}(u_{\ep})(B_R) - J_{u_{\ep}}(-A)
 \nonumber\\
 &\geq& 2(1+\b) M_{A_0}(u) \cdot \pi \nu_u(0) + 8(2M_{A_0}(u) +1)\delta >0,
\end{eqnarray}
where $r=e^t$ and $R= e^{-2A}$. 
Thanks to the estimate in equation (\ref{dn-001}) again,
we can infer $M_A(u_{\ep})>0$. 
Then the lower bound of $I'_{u_{\ep}}(t)$ can be estimated for all $t\in [-2A, A]$ as 

\begin{eqnarray}
\label{app-013}
I'_{u_{\ep}}(t)&\geq&\frac{2(1+\b) M_{A_0}(u) \cdot \pi \nu_u(0) + 8(2M_{A_0}(u) +1)\delta }{M_A(u_{\ep})} - 2I_{u_{\ep}}(-A)
\nonumber\\
&\geq& \frac{2(1+\b) M_{A_0}(u) \cdot \pi \nu_u(0) + 8(2M_{A_0}(u) +1)\delta }{(1+\b)M_{A_0}(u) + \k} - 2I_{u_{\ep}}(-A)
\nonumber\\
&\geq& 4\delta -\frac{2\pi\nu_u(0) \k}{ (1+\b)M_{A_0}(u) + \k}.
\end{eqnarray}

Finally, we take the integral
\begin{equation}
\label{app-014}
I_{u_{\ep}}(-A) = I_{u_{\ep}}(-2A) + \int_{-2A}^{A} I'_{u_{\ep}}(t) dt \geq \pi\nu_u(0) + 15\delta +O(\k),
\end{equation}
but this contradicts to condition-(vi), 
since we have $M_{A_0}(u)>0$ 
and $\k$ can be taken arbitrarily small. 
Then our result follows. 

\end{proof}

It is also apparent that the inequality (equation (\ref{app-009}))
in the Theorem (\ref{thm-pln-001}) implies the Theorem (\ref{thm-002}).  
In other words, this estimate on the residual 
Monge-Amp\`ere mass is a more general result than
the zero mass conjecture.

 In order to illustrate this inequality in a better way, 
we provide the following typical examples.

\begin{example}
Suppose a function $u\in \cF(B_1)$ is further radially symmetric, cf. \cite{Li19}.
That is to say, we have $u(z) = u(|z|)$ for all $z\in B_1$. 
It is a standard fact that we can compute 
$$ \tau_{u}(0) = [\nu_u(0)]^2, $$
and then equation (\ref{app-009}) holds in a trivial way. 
\end{example}

\begin{example}
Consider Demailly's example as follows, cf. \cite{Dem93}.
For any $\ep >0$, we take 
$$ u(z_1, z_2): = \max\{ \ep\log|z_1|, \ep^{-1}\log|z_2| \}.$$
Then we have $\nu_u(0) = \ep$ and $\tau_u(0) =1$. 
However, the maximal directional Lelong 
number is determined on the complex line $\{ z_1 =0 \}$.
Then it follows $M_A(u) = \ep^{-1}$ for each $A>0$, 
and equation (\ref{app-009}) holds as 
$$ 1 = \tau_u(0) \leq 2\lambda_{u}(0) \cdot \nu_u(0) + [\nu_u(0)]^2= 2+ \ep^2.  $$
\end{example}

\begin{example}
Consider Chi Li's example as follows, cf. \cite{Chi21}.  
Take the holomorphic projection $p: \bC^2 -\{ 0 \} \rightarrow \bC\bP^1 $,
and $\omega_0$ the standard Fubini-Study metric.
Let $\vp$ be the $\omega_0$-plurisubharmonic function on $\bC\bP^1 $ 
such that 
its singular locus $\{ \vp = -\infty \}$
is a Cantor set $\mathcal{C}$, and $\vp$ is $\omega_0$-pluriharmonic 
outside $\mathcal{C}$. 
Then we can construct 
$$ u(z):= \max\{ \log|z|^2 + p^*\vp, 2\log|z|^2 \}. $$
Then compute and obtain 
$\tau_u(0) = 8$ and $\nu_u(0) = 2$. 
Here we note that 
the function $p^* \vp$ does not directly contribute to the 
variation in the $t$-direction on each complex line. 
Therefore, we have $M_A(u) = 4$ for all $A>0$ large, 
and then it follows 
$$ 8 = \tau_u(0) \leq 2\lambda_{u}(0) \cdot \nu_u(0) + [\nu_u(0)]^2= 20. $$


\end{example}

\begin{example}
The following example is constructed by Coman and Guedj, cf. \cite{CG09}. 
We note that it is no longer $S^1$-invariant, and then equation (\ref{app-009}) fails in this case:
Fix an integers $n>4$, and we write 
$$ u(z): =  \frac{1}{2n} \log \left( | z_2 - z_1^n |^2 + |z_2^n|^2  \right).$$
Then it follows $\nu_u(0) = \frac{1}{n}$ and $\tau_u(0) =1$. 
The maximal direction of the Lelong numbers is along the line $\{ z_2 =0 \}$, 
and then we have $\lambda_u(0) =1$. 
Hence it follows 
$$ 1 = \tau_u(0) >   2\lambda_{u}(0) \cdot \nu_u(0) + [\nu_u(0)]^2 = \frac{2}{n} + \frac{1}{n^2}. $$
For $S^1$-invariant plurisubharmonic functions, we can consider the following: 
$$u_1(z): = \frac{1}{2n} \log \left( | z_2 - z_1 |^2 + |z_2^n|^2  \right).$$
Then we have $\nu_{u_1}(0) = \frac{1}{n}$, $\lambda_{u_1}(0) =1$ and $\tau_{u_1}(0) = \frac{1}{n}$, 
and equation (\ref{app-009}) holds as 
$$ \frac{1}{n} = \tau_{u_1}(0) \leq   2\lambda_{u_1}(0) \cdot \nu_{u_1}(0) + [\nu_{u_1}(0)]^2 =  \frac{2}{n} + \frac{1}{n^2}.$$
Another modification is 
$$ u_2(z): = \frac{1}{2n} \log \left( | z_2^n - z_1^n |^2 + |z_2^n|^2  \right).  $$
Then we have $\nu_{u_2}(0) = 1$, $\lambda_{u_2}(0) =1$ and $\tau_{u_2}(0) = 1$,  
and equation (\ref{app-009}) holds as 
$$ 1 = \tau_{u_2}(0) \leq   2\lambda_{u_2}(0) \cdot \nu_{u_2}(0) + [\nu_{u_2}(0)]^2 = 3.$$

\end{example}

\section{Variational approach}
In this section, we will introduce another 
point of view to look at the decomposition formula 
(Theorem (\ref{thm-cal-001})) and also the zero mass conjecture. 
 The observation is that the function $u_t$ actually 
defines a curve in the 
space of all quasi-plurisubharmonic functions on $\bC\bP^1$, 
and it can be thought of as a subgeodesic with non-trivial $S^1$-fiberation 
 in the space of 
K\"ahler potentials. 
Then the first term on the R.H.S. of equation (\ref{cal-012})
corresponds to  the first variation of the so called \emph{pluri-complex energy}.

 \subsection{Subgeodesic on fiber bundles}
 We recall some basic facts in K\"ahler geometry. 
 Let $X$ be a compact Riemann surface without boundary, 
 and $\omega_0$ a K\"ahler metric on this Riemann surface.
 Denote $\mathbb{D}^*$ by the punctured unit disk in $\bC$,
 and it can be identified with the product $(0,1)\times S^1$ via the polar coordinate $(r, s)$,
 namely, we can write $$z:= r e^{i s} $$ as a complex variable in $\bD^*$. 
 
 Consider the product space $Y: = \bD^* \times X$, 
 and the projection maps $\pi_1: Y \rightarrow \bD^*$ and $\pi_2: Y \rightarrow X$. 
 The pull back $\pi_2^* \omega_0$ is a closed non-negative $(1,1)$-form on $Y$. 
 Then a \emph{subgeodesic ray} in the space of K\"ahler potentials  is 
 an $S^1$-invariant $\pi^*_2 \omega_0$-plurisubharmonic functions $v$ on $Y$.
 That is to say, 
 if we take any local potential $\Phi$ of $\omega_0$,
then the function 
 $$ V: = \pi_2^* \Phi + v $$ 
 is independent of  the variable $s$, 
 and  plurisubharmonic in the product manifold $(0,1)\times S^1 \times X$.
In other words, a subgeodesic $V$ is locally an $S^1$-invariant plurisubharmonic function 
on the trivial $\bD^*$-bundle of $X$,
and hence the restriction $v|_{X\times\{t\}}$ is a $\omega_0$-plurisubharmonic function 
on each fiber $X\times\{ t\}$.

 
 Furthermore, a subgeodesic ray $v$ is a \emph{geodesic ray}, 
 if it satisfies the following \emph{homogeneous complex Monge-Amp\`ere equation} on $Y$
 \begin{equation}
 \label{sub-001}
( dd^c_{z, X} V   )^2 = (\pi_2^* \omega_0 + dd^c_{z, X} v)^2 = 0. 
 \end{equation}

 From now on, we put $X = \bC\bP^1$ and $\omega_0$ the Fubini-Study metric on it. 
 In fact, the projective space can be viewed as the moduli space of $\bC^2-\{0\}$,
 under the natural $\bC^*$-action. 
The punctured disk $\bD^*$ acts in the same way on 
the punctured ball $B_1^*$ in $\bC^2$,
and then $B_1^*$ can be thought of as a non-trivial $\bD^*$-bundle of $\bC\bP^1$
via the Hopf-fiberation, i.e. we can write the bundle map as follows
$$(\dag) \ \ \ \ \ \ \ \ \  \bD^* \hookrightarrow B_1^* \xrightarrow{p} \bC\bP^1. $$



If we take a function $u\in \cF(B_1)$,
then it is naturally an $S^1$-invariant plurisubharmonic function on this non-trivial $\bD^*$-bundle. 
Writing $B_1^*$ as a product $(0,1)\times S^3$,
a fiber $\{r \}\times S^3$ is identified with the $3$-sphere $S_r $ for all $r\in (0,1)$. 
Then 
the restriction $u|_{S_r}$ 
can be viewed as a function on $\bC\bP^1$ via the Hopf-fiberation. 
In fact,  
this restriction $u|_{S_r}$ is exactly the function $u_t$ defined in equation (\ref{cal-0111}), 
via the change of variables $r = e^t$.    

If we further require $u\in \cF^{\infty}(B_1)$, 
then the Laplacian decomposition formula (equation (\ref{lap-004})) implies 
on each fiber $\bC\bP^1 \times \{ t\}$
\begin{equation}
\label{cf-001}
 \frac{1}{2}\left( \ddot{u}_t + 2\dot{u}_t \right) + \Delta_{\omega} u_t  \geq 0. 
\end{equation}
Therefore, the function $u_t$ is 
quasi-plurisubharmonic on each fiber $\bC\bP^1 \times \{t \}$,
but the lower bound of its complex hessian varies 
with respect to $t$ and $u$ itself. 
For these reasons, we introduce the following definition. 
\begin{defn}
\label{def-cf-001}
For a function $u\in\cF(B_1)$,
we say that $u_t$ is a bounded subgeodesic ray 
on the fiber bundle $\emph{(\dag)}$.
Moreover, it is a geodesic ray on this fiber bundle if we have 
$$ (dd^c u)^2 =0,$$
on $B_1^*$. 
\end{defn}

If $u$ is further in the family $\cF^{\infty}(B_1)$,
then we say that $u_t$ 
is a $C^2$-continuous subgeodesic ray 
on this fiber bundle.

\subsection{Energy functionals}
\label{sub-001}
For a quasi-plurisubharmonic function on $\bC\bP^1$,
the pluri-complex energy $\cE$ is defined as 
$$\cE(u_t) : = \int_{\bC\bP^1} (-u_t) dd^c_{\z} u_t =  - \int_{\bC\bP^1}  u_t (\Delta_{\omega}u_t ) \omega. $$

It is well known that this energy is concave along a subgeodesic in the space of K\"ahler potentials. 
In fact, the push-forward of the Monge-Amp\`{e}re measure 
is exactly the complex hessian of $-\cE$ along a sub-geodesic ray, cf. \cite{BB22}. 

Next we will show that our decomposition formula is an analogue of this
on a non-trivial fiber bundle. 
Let $u_t$ be a $C^2$-continuous subgeodesic ray on the fiber bundle $(\dag)$. 
Then the first variation of this energy with respect to $t$ can be computed as 
\begin{equation}
\label{sub-004}
\frac{d}{dt} \cE (u_t) = - 2 \int_{\bC\bP^1}  \dot{u}_t (\Delta_{\omega}u_t ) \omega,
\end{equation}
and this is exactly the negative of the first term on the R.H.S. of the decomposition formula (equation (\ref{cal-012})). 
Take the following non-negative functional to represent the complex Monge-Amp\`ere mass for $r = e^t$
\begin{equation}
\label{sub-0045}
K (u_t): = \pi^{-1} \cdot \mbox{MA}(u)(B_r).
\end{equation}
Then the decomposition formula can be rewritten as follows. 

\begin{lemma}
\label{lem-en-000}
Suppose $u_t$ is a $C^2$-continuous subgeodesic ray on the fiber bundle $(\dag)$. 
Then we have 
\begin{equation}
\label{sub-005}
- \frac{d}{dt }\cE (u_t) + J_u(t) = K (u_t),
\end{equation}
for all $t\in (-\infty, 0)$. 
\end{lemma}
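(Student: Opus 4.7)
The plan is to recognize the identity as nothing more than a rewriting of the decomposition formula (Theorem \ref{thm-cal-001}) once the first variation of $\cE$ is identified with its integrand. I would proceed in three short steps.

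First, I would verify the first variation formula (\ref{sub-004}) rigorously. Since $u_t$ is a $C^2$-continuous subgeodesic ray, the function $u$ is $C^2$ in a neighborhood of the fiber $\bC\bP^1\times \{t\}$, so we may differentiate under the integral sign in $\cE(u_t)= -\int_{\bC\bP^1} u_t (\Delta_\omega u_t)\,\omega$. Using the self-adjointness of $\Delta_\omega$ on the closed surface $\bC\bP^1$ (no boundary terms), we get
\[
\frac{d}{dt}\cE(u_t) = -\int_{\bC\bP^1}\dot u_t (\Delta_\omega u_t)\,\omega - \int_{\bC\bP^1} u_t (\Delta_\omega \dot u_t)\,\omega = -2\int_{\bC\bP^1} \dot u_t (\Delta_\omega u_t)\,\omega,
\]
which is exactly equation (\ref{sub-004}).

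Second, I would express $K(u_t)$ in boundary form. By Proposition (\ref{prop-rm-001}), for any $R \in (0,1)$ with $r = e^t = R$,
\[
\pi \cdot K(u_t) = \text{MA}(u)(B_R) = \int_{B_R} (dd^c u)^2 = \int_{S_R} d^c u \wedge dd^c u,
\]
and the decomposition formula of Theorem (\ref{thm-cal-001}) converts the right-hand side into
\[
\pi \cdot K(u_t) = \pi\left(2\int_{\bC\bP^1}\dot u_t(\Delta_\omega u_t)\,\omega + \int_{\bC\bP^1}(\dot u_t)^2\,\omega\right).
\]

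Third, I would combine the two displays: the first gives $-\tfrac{d}{dt}\cE(u_t) = 2\int_{\bC\bP^1}\dot u_t(\Delta_\omega u_t)\,\omega$, while the second rewrites $K(u_t)$ as that same quantity plus $J_u(t)$. Adding yields $K(u_t) = -\tfrac{d}{dt}\cE(u_t) + J_u(t)$, which is the desired identity. There is no real obstacle here: the heavy lifting has already been done in Section 4, and the only mild technical point is justifying differentiation under the integral in Step 1, which is immediate from the $C^2$-regularity of $u$ on $B_1^*$.
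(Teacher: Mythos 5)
Your proposal is correct and follows the same route the paper implicitly takes: the paper states equation (\ref{sub-004}) as the first variation of $\cE$ (your Step~1, via differentiation under the integral and self-adjointness of $\Delta_\omega$) and then presents Lemma (\ref{lem-en-000}) as an immediate rewriting of Theorem (\ref{thm-cal-001}) combined with Proposition (\ref{prop-rm-001}), which is exactly your Steps 2 and 3. The only difference is that the paper leaves this as an assertion while you spell out the short verification, which is fine and contains no gaps.
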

By invoking toric plurisubharmonic functions,  
we can take a closer look at these energy functionals. 

\begin{example}
\label{exa-sub-001}
Suppose $u\in\cF^{\infty}(B_1)$ also has toric symmetry, i.e. we have 
$$ u(z_1, z_2) = u(e^{i\theta_1}z_1, e^{i\theta_2}z_2), $$
for arbitrary $\theta_1, \theta_2\in \bR$. 
Then in the real Hopf-coordinate, we can write 
$$ u(r, \eta, \theta, \vp) = u(r, \theta). $$
On the one hand, equation (\ref{cal-017}) boils down to the following
\begin{eqnarray}
\label{sub-006}
&&  8 \int_{S_R} d^c u \wedge dd^cu 
\nonumber\\
&=& 16\pi^2  \int_{0}^{\pi} (ru_r)  \d_{\theta}(\sin\theta u_{\theta}) d\theta
+ 4\pi^2  \int_{0}^{\pi}  (r u_r)^2 \sin\theta d\theta. 
\end{eqnarray} 
Then the first variation of the energy $\cE$ with respect to $t$ is 
$$ - \frac{d}{dt} \cE(u_t) =   2\pi \int_{0}^{\pi} (ru_r)  \d_{\theta}(\sin\theta u_{\theta}) d\theta, $$
and the $L^2$-Lelong number is 
$$ J_u(t) = \frac{\pi}{2} \int_{0}^{\pi}  (r u_r)^2 \sin\theta d\theta.  $$

On the other hand, 
we can also compute the complete complex Hessian of $u$ as follows
\begin{eqnarray}
\label{sub-007}
  4\ dd^cu 
&=& \left\{   2\sin\theta u_{,r\theta} - \d_r(ru_r) \cos\theta \right\} dr \wedge d\vp + \d_r(ru_r) dr\wedge d\eta
\nonumber\\
&+&  \left\{ (ru_r)\sin\theta - ( ru_{,r\theta} )\cos\theta \right\} d\theta \wedge d\vp
\nonumber\\
&+& 2\left\{ \sin\theta\cdot u_{,\theta\theta}  + \cos\theta \cdot u_{\theta} \right\} d\theta \wedge d\vp + (ru_{,r\theta}) d\theta\wedge d\eta. 
\end{eqnarray} 
Therefore, we obtain a formula for the positive measure as 
\begin{eqnarray}
\label{sub-008}
&&8 (dd^c u)^2
\nonumber\\
&=& \left\{ 2\d_r(ru_r) \d_{\theta}(\sin\theta u_{\theta}) -2 r\sin\theta(u_{,r\theta})^2 + \d_r(ru_r) (ru_r)\sin\theta  \right\} 
\nonumber\\
&&  dr\wedge d\theta \wedge d\vp \wedge d\eta
\end{eqnarray}
Then we continue to compute the second variation of the energy $\cE$ as 
\begin{eqnarray}
\label{sub-009}
- \frac{d^2 \cE(u_t)}{ dt^2}
&=& 2\pi r \int_0^{\pi} \left\{  \d_r (ru_r) \d_{\theta}(\sin\theta u_{\theta}) + (ru_r) \d_{\theta} (\sin\theta u_{,r\theta}) \right\} d\theta
\nonumber\\
&= & 2\pi r \int_0^{\pi} \left\{  \d_r (ru_r) \d_{\theta}(\sin\theta u_{\theta})  -  r \sin\theta (u_{,r\theta})^2 \right\} d\theta
\nonumber\\
&\geq& - \pi r \int_0^{\pi}   \dot{u}_t \ddot{u}_t \sin\theta d\theta. 
\end{eqnarray}

\end{example}

As we have expected, the last term on the R.H.S. of equation (\ref{sub-009})
is exactly equal to $- d J_u(t) / dt$. 
That is to say, the almost concavity of the $\cE$ functional along the ray $u_t$
is due to the positivity of $(dd^c u)^2$.

In fact, the functionals $J_u(t)$ and $K(u_t)$ 
are both non-negative, non-decreasing functions along $t\in (-\infty, 0)$. 
Then  denote $\mathcal{K}$, $\mathcal{J}$ by 
their primitives along $t$, i.e. we have 
$$ \frac{d}{dt} \mathcal{K}(u_t) =  K (u_t), \ \ \mbox{and } \ \ \ \frac{d}{dt} \mathcal{J}(u_t) =  J_u (t),$$
for all $t\in (-\infty, 0)$. 
Then both functionals are non-decreasing, convex functions along $t$.
Therefore, we can rephrase Lemma (\ref{lem-en-000}) as follows. 

\begin{theorem}
\label{thm-en-001}
Suppose $u_t$ is a $C^2$-continuous subgeodesic ray on the fiber bundle $(\dag)$.
Then we have the equality 
\begin{equation}
\label{en-002}
\mathcal{K}(u_t) = \mathcal{J}(u_t) - \mathcal{E}(u_t) + A,
\end{equation}
for a constant $A$ and all $t\in (-\infty, 0)$. 
In particular, 
the difference $\mathcal{J} - \cE $ is non-decreasing and convex
along the ray.   
Moreover, it is affine if and only if $u_t$ is a geodesic ray on this fiber bundle. 

\end{theorem}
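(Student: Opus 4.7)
The plan is to obtain the theorem by straightforwardly integrating the pointwise identity of Lemma \ref{lem-en-000}, and then reading off the geometric content from the sign properties of the functional $K(u_t)$.

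First I would fix a reference time $t_0 \in (-\infty, 0)$ and integrate the decomposition
\[
-\frac{d}{dt}\cE(u_t) + J_u(t) = K(u_t)
\]
from $t_0$ to $t$. Since $\mathcal{J}$ and $\mathcal{K}$ are by definition primitives of $J_u$ and $K(u_t)$, the fundamental theorem of calculus (applicable here because $u \in \cF^\infty(B_1)$ makes everything smooth on $B_1^*$) immediately yields
\[
\cE(u_t) - \cE(u_{t_0}) = -[\mathcal{K}(u_t) - \mathcal{K}(u_{t_0})] + [\mathcal{J}(u_t) - \mathcal{J}(u_{t_0})],
\]
which rearranges to equation (\ref{en-002}) with $A := \cE(u_{t_0}) - \mathcal{J}(u_{t_0}) + \mathcal{K}(u_{t_0})$.

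Next I would observe that $K(u_t) = \pi^{-1}\,\mbox{MA}(u)(B_{e^t})$ is non-negative and non-decreasing in $t$, since the Monge-Amp\`ere measure of a nested family of balls is monotone in the radius and $r = e^t$ is monotone in $t$. Therefore its primitive $\mathcal{K}(u_t)$ is non-decreasing and convex, and by (\ref{en-002}) the same holds for $\mathcal{J} - \cE$, which differs from $\mathcal{K}$ only by the additive constant $A$.

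For the affine characterization, I would note that $\mathcal{J} - \cE$ is affine if and only if $\mathcal{K}$ is affine, which happens precisely when $K(u_t)$, hence $\mbox{MA}(u)(B_r)$, is constant in $r \in (0,1)$. Decomposing
\[
\mbox{MA}(u)(B_r) = \mbox{MA}(u)(\{0\}) + \mbox{MA}(u)(B_r^*),
\]
where the second term is non-negative, non-decreasing in $r$ and vanishes as $r \to 0^+$, one sees that constancy of $\mbox{MA}(u)(B_r)$ is equivalent to $(dd^c u)^2 \equiv 0$ on $B_1^*$, which is exactly the geodesic condition of Definition \ref{def-cf-001}. The main (rather mild) obstacle here is this last equivalence: one must remember to split off the Dirac mass at the origin, since $K(u_t)$ may itself be a non-zero constant along a genuine geodesic ray with $\tau_u(0) > 0$.
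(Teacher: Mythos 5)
Your proof is correct and follows the same route as the paper: equation (\ref{en-002}) comes from integrating Lemma (\ref{lem-en-000}), the monotonicity and convexity of $\cJ - \cE$ come from $K(u_t)\geq 0$ being non-decreasing in $t$, and the affine case is characterized through the functional $K$. However, your handling of the affine characterization is actually more precise than the paper's own proof. The paper asserts that ``$K(u_t)=0$ for all $t$ if and only if $(dd^c u)^2 = 0$ on $B_1^*$'', but the reverse implication is false in general: for a geodesic ray with positive residual mass at the origin, such as $u=\log|z|$, one has $(dd^c u)^2 = 0$ on $B_1^*$ yet $K(u_t) = \pi^{-1}\,\mathrm{MA}(u)(B_{e^t}) = \pi \cdot \tau_u(0) = \pi$ identically. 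The correct statement, which you supply, is that affineness of $\cK$ is equivalent to \emph{constancy} of $K(u_t)$, and, after splitting off the Dirac mass at the origin, $\mathrm{MA}(u)(B_r) = \mathrm{MA}(u)(\{0\}) + \mathrm{MA}(u)(B_r^*)$, constancy of $K$ is equivalent to $\mathrm{MA}(u)(B_r^*)\equiv 0$, i.e.\ to $(dd^c u)^2 = 0$ on $B_1^*$. This is the condition in Definition (\ref{def-cf-001}), so your argument closes the gap in the paper's terse justification.
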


\begin{proof}
It is left to prove the statement about the geodesic. 
For a function $u\in \cF^{\infty}(B_1)$, it is clear that 
$ K(u_t) = 0,$
for all $t\in (-\infty, 0)$ if and only if $(dd^c u)^2 =0$ on $B_1^*$. 
Then our result follows. 
\end{proof}

Recall that a primitive of the $I_u$-functional is defined as 
$$ \mathcal{I}(u_t) = \int_{\bC\bP^1} u_t \omega. $$
It is also non-decreasing and convex along $t$, and 
the asymptote of $\pi^{-1}\cdot \cI$ as $t\rightarrow -\infty$ is exactly the Lelong number of $u$ at the origin. 
Then the zero mass conjecture
can be rephrased in the energy setting, 
and it reveals the relation between the asymptotes of these two 
functionals along a subgeodesic ray. 

\begin{theorem}
\label{thm-en-002}
Suppose $u_t$ is a bounded subgeodesic ray on the fiber bundle $(\dag)$.
Assume that we have 
$$ \lim_{t\rightarrow -\infty}\frac{d\cI(u_t)}{dt} =0. $$
Then it follows 
$$ \lim_{t\rightarrow -\infty}\frac{d\cK(u_t)}{dt} =0. $$


\end{theorem}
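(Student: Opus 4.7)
The plan is to recognize that Theorem (\ref{thm-en-002}) is the zero mass conjecture restated in the language of convex energy functionals, so it follows immediately from Theorem (\ref{thm-002}) once one unpacks the two derivatives $d\cI/dt$ and $d\cK/dt$.

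First I would identify the derivatives. By the definitions of $\cI$ and $\cK$ (the latter being the primitive of $K(u_t) = \pi^{-1}\mbox{MA}(u)(B_r)$ with $r = e^t$), one has
$$\frac{d\cI(u_t)}{dt} = I_u(t) = \int_{\bC\bP^1} \dot{u}_t\, \omega, \qquad \frac{d\cK(u_t)}{dt} = K(u_t) = \pi^{-1}\mbox{MA}(u)(B_r).$$
Both $\cI$ and $\cK$ are non-decreasing and convex in $t$ (by property (a) of Section 5 and by the monotonicity of the Monge--Amp\`ere mass in $r$), so these one-sided derivatives are well defined even in the merely bounded case.

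Next I would translate the two limits into classical invariants at the origin. Property (c) in Section 5 gives
$$\lim_{t\to -\infty} \frac{d\cI(u_t)}{dt} = \lim_{t\to -\infty} I_u(t) = \pi\,\nu_u(0),$$
so the hypothesis $\lim_{t\to -\infty} d\cI(u_t)/dt = 0$ is equivalent to $\nu_u(0)=0$. Similarly, since $\mbox{MA}(u)(B_r)$ is non-increasing as $r\downarrow 0$, equation (\ref{rm-001}) yields
$$\lim_{t\to -\infty} \frac{d\cK(u_t)}{dt} = \lim_{r\to 0^+} \pi^{-1}\mbox{MA}(u)(B_r) = \pi\,\tau_u(0),$$
so the conclusion is equivalent to $\tau_u(0)=0$.

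Finally, since by Definition (\ref{def-cf-001}) a bounded subgeodesic ray on the fiber bundle $(\dag)$ is precisely a function $u\in\cF(B_1)$, the implication $\nu_u(0)=0 \Rightarrow \tau_u(0)=0$ is exactly the content of Theorem (\ref{thm-002}). Under this reformulation there is no new analytic obstacle: the substantive work has already been carried out in the apriori estimates feeding Theorem (\ref{thm-002}) (in particular Lemma (\ref{lem-app-001}) together with the slicing and regularization arguments in Lemma (\ref{lem-gc-003})), and Theorem (\ref{thm-en-002}) itself is a pure translation under the energy dictionary of Theorem (\ref{thm-en-001}).
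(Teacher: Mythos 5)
Your proposal is correct and is essentially the paper's own (implicit) argument: the paper states Theorem (\ref{thm-en-002}) without a separate proof precisely because it is the dictionary translation of Theorem (\ref{thm-002}), using $d\cI/dt = I_u(t) \to \pi\nu_u(0)$ from property (c), $d\cK/dt = K(u_t) = \pi^{-1}\mathrm{MA}(u)(B_r) \to \pi\tau_u(0)$ by monotonicity and equation (\ref{rm-001}), and Definition (\ref{def-cf-001}) to identify a bounded subgeodesic ray on $(\dag)$ with a function in $\cF(B_1)$. Your unpacking of the two derivatives and the equivalences $\lim d\cI/dt = 0 \Leftrightarrow \nu_u(0)=0$ and $\lim d\cK/dt = 0 \Leftrightarrow \tau_u(0)=0$ is exactly what is needed.
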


Similarly, 
for a general function $u\in \cF(B_1)$,
the estimate on the residual Monge-Amp\`ere mass 
 in Theorem (\ref{thm-pln-001}) can also be interpreted 
under the energy picture. 
 It says that 
  the asymptote of the $\mathcal{K}$-functional 
 can be controlled by the maximal directional 
 Lelong number and the asymptote of the $\cI$-functional along the ray.

\begin{rem}
\label{rem-000}
It is possible to prove equation (\ref{en-002}) along a 
bounded subgeodesic ray $u_t$.
In fact, the pluri-complex energy $\cE$ can be rewritten as 
$$ \cE(u_t) = \int_{\bC\bP^1} d_{\z} u_t \wedge d^c_{\z} u_t. $$
This is well defined for almost all $t$
since $u$ is in the Sobolev space $W^{1,2}_{loc}(B_1)$, cf. \cite{Blo04}. 
Then one can utilize the regularization sequence $u_{\ep}$ 
to obtain the convergences on both sides of equation (\ref{en-002}) as before.  
\end{rem}

\bigskip

\begin{bibdiv}
\begin{biblist}

\bib{BT0}{article}{
   author={Bedford, E.}
   author={Talyor, A.},
   title={The Dirichlet Problem for a Complex Monge-Amp\`ere equation},
   journal={Inventiones math.},
   volume={37},
   date={1976},
   number={},
   pages={1-44},
}

\bib{BT}{article}{
   author={Bedford, E.}
   author={Talyor, A.},
   title={A new capacity for plurisubharmonic functions},
   journal={Acta Math.},
   volume={149},
   date={1982},
   number={},
   pages={1-41},
}

\bib{BB22}{article}{
   author={Berman, R.}
   author={Berndtsson, B.},
   title={Moser-Trudinger type inequalities for complex Monge-Amp\`re operators and Aubin's ``hypoth\`ese fondamentale".},
   journal={Annales de la Facult\'e des sciences de Toulouse : Math\'ematiques, Serie 6, },
   volume={31},
   date={2022},
   number={},
   pages={595-645},
}

\bib{BB}{article}{
   author={Berman, R.}
   author={Berndtsson, B.},
   title={Plurisubharmonic functions with symmetry},
   journal={ Indiana Univ. Math. J.},
   volume={63},
   date={2014},
   number={},
   pages={345-365},
}

\bib{Blo04}{article}{
   author={Blocki, Z.},
   title={On the definition of the Monge-Amp\`ere operator in $\bC^2$},
   journal={Math. Ann.},
   volume={328},
   date={2004},
   number={},
   pages={415-423},
}

\bib{BFJ07}{article}{
   author={Boucksom, S.},
   author={Favre, C. },
   author={Jonsson, M.},
   title={Valuations and plurisubharmonic singularities.},
   journal={Publ. Res. Inst. Math. Sci.},
   volume={44},
   date={2008},
   number={2},
   pages={449-494},
}

\bib{Ceg86}{article}{
   author={Cegrell, U.}, 
   title={Sum of continuous plurisubharmonic functions and the complex Monge-Amp\`ere operator},
   journal={Math. Z.},
   volume={193},
   date={1986},
   number={},
   pages={373-380},
}

\bib{Ceg02}{article}{
   author={Cegrell, U.}, 
   title={Explicit calculation of a Monge-Amp\`ere operator},
   journal={Actes des Rencontres d'Analyse Complexe (Poitiers-Futuroscope, 1999), Atlantique, Poitiers,},
   volume={},
   date={2002},
   number={},
   pages={39-42},
}

\bib{Ceg04}{article}{
   author={Cegrell, U.}, 
   title={The general definition of the complex Monge-Amp\`ere operator},
   journal={Ann. Inst. Fourier (Grenoble)},
   volume={54},
   date={2004},
   number={1},
   pages={159-179},
}

\bib{C00}{article}{
   author={Chen, X.-X.}, 
   title={The space of K\"ahler metrics},
   journal={J. Differential Geometry.},
   volume={56},
   date={2000},
   number={},
   pages={189-234},
}

\bib{CG09}{article}{
   author={Coman, D.},
   author={Guedj, V.},
   title={Quasiplurisubharmonic Green functions.},
   journal={J. Math. Pures Appl. },
   volume={92},
   date={2009},
   number={},
   pages={456-475},
}

\bib{Dem93}{article}{
   author={Demailly, J.P.}, 
   title={Monge-Amp\`ere operators, Lelong numbers and intersection theory},
   journal={Complex analysis and geometry, Univ. Ser. Math., Plenum, New York,},
   volume={},
   date={1993},
   number={},
   pages={115-193},
}


\bib{Dem2}{article}{
   author={Demailly, J.P.},
   title={Analytic Methods in Algebraic geometry},
   journal={Surveys of Modern Mathematics, Higher Education Press, International Press},
   volume={}
   date={2010},
   page={}
}

\bib{Don97}{article}{
   author={Donaldson, S.K.}, 
   title={Remarks on gauge theory, complex geometry and 4-manifold topology},
   journal={The Fields Medel Volume, (M.F. Atiyah and D. Iagolnitzer, eds.), World Scientific},
   volume={},
   date={1997},
   number={},
   pages={},
}

\bib{Fe}{article}{
   author={H. Federer},
   title={Geometric measure theory},
   journal={Springer Verlag, Berlin, Heidelberg, New-York},
   volume={153},
   date={1969},
   page={}
}

\bib{GWZ86}{article}{
   author={GlucK, H.}
   author={Warner, F.}
   author={Ziller, W.}
   title={The geometry of the Hopf fiberations.},
   journal={L'Enseignement Math\'ematique},
   volume={32},
   date={1986},
   number={},
   pages={173-198},
}

\bib{G10}{article}{
   author={Guedj, V.},
   title={Propri\'et\'es ergodiques des applications rationnelles.},
   journal={Quelques aspects des syst\`emes dynamiques polynomiaux S. Cantat, A. Chambert-Loir, V.Guedj Panoramas et Synth. },
   volume={30},
   date={2010},
   page={}
}

\bib{GZ15}{article}{
   author={Guedj, V.},
   author={Zeriahi, A.},
   title={Open problems in pluripotential theory},
   journal={arXiv: 1511.00705},
   volume={},
   date={},
   page={}
}

\bib{GZ}{article}{
   author={Guedj, V.},
   author={Zeriahi, A.},
   title={Degenerate complex Monge-Amp\`ere equations},
   journal={EMS},
   volume={},
   date={2017},
   page={}
}

\bib{KR21}{article}{
   author={Kim, D.},
   author={Rashkovskii, A.},
   title={Higher Lelong numbers and convex geometry},
   journal={The Journal of Geometric Analysis.},
   volume={31},
   date={2021},
   page={2525-2539}
}

\bib{Kis84}{article}{
   author={Kiselman, C.O.}, 
   title={Sur la d\'efinition de l'op\'erateur de Monge-Amp\`ere complexe},
   journal={Lecture notes in Math., 1094. Springer-Verlag, Berlin-New York,},
   volume={},
   date={1984},
   number={},
   pages={139-150},
}

\bib{LUW}{article}{
   author={R. Lehoucq},
   author={J.P. Uzan},
   author={J. Weeks},
   title={Eigenmodes of Lens and Prism Spaces},
   journal={Kodai Mathematical Journal},
   volume={26},
   date={2002},
   number={},
   pages={119-136},
}

\bib{Chi21}{article}{
   author={Chi Li},
   title={Analytical approximation and Monge-Amp\`ere masses of plurisubharmonic singularities},
   journal={IMRN},
   volume={00},
   date={2023},
   number={00},
   pages={1-23},
}

\bib{Li19}{article}{
   author={Long Li},
   title={The Lelong number, the Monge-Amp\`ere mass and the Schwarz symmetrization of plurisubharmonic functions.},
   journal={Ark. Mat.},
   volume={58},
   date={2020},
   number={},
   pages={369-392},
}

\bib{Li22}{article}{
   author={Long Li},
   title={Approximation of weak geodesics and subharmonicity of Mabuchi energy, II: $\ep$-geodesics.},
   journal={Calc. Var.},
   volume={62},
   date={2023},
   number={73},
   pages={},
}

\bib{Ra01}{article}{
   author={Rashkovskii, A. }, 
   title={Lelong numbers with respect to regular plurisubharmonic functions},
   journal={Results Math.},
   volume={39},
   date={2001},
   number={},
   pages={320-332},
}

\bib{Ra13}{article}{
   author={Rashkovskii, A. }, 
   title={Analytic approximations of plurisubharmonic singularities.},
   journal={Math. Z. },
   volume={275},
   date={2013},
   number={3-4},
   pages={1217-1238},
}

\bib{Sem74}{article}{
   author={Semmes, S.}, 
   title={Complex Monge-Amp\`ere and symplectic manifolds},
   journal={Amer. J. Math. },
   volume={114},
   date={1992},
   number={},
   pages={495-550},
}

\bib{Siu74}{article}{
   author={Siu, Y.-T.}, 
   title={Analyticity of sets associated to Lelong numbers and the extension of closed positive currents},
   journal={Invent. Math. },
   volume={27},
   date={1974},
   number={},
   pages={53-156},
}

\bib{Siu75}{article}{
   author={Siu, Y.-T.}, 
   title={Extension of meromorphic maps.},
   journal={Ann. of Math. },
   volume={102},
   date={1975},
   number={},
   pages={421-462},
}

\bib{Wik05}{article}{
   author={Wiklund, J. }, 
   title={Plurcomplex charge at weak singularities. },
   journal={arXiv:math/0510671. },
   volume={},
   date={},
   number={},
   pages={},
}

\end{biblist}
\end{bibdiv}

\end{document}